\DeclareMathOperator{\codim}{codim}
\DeclareMathOperator{\rank}{rank}
\DeclareMathOperator{\Fix}{Fix}
\DeclareMathOperator{\hot}{hot}
\DeclareMathOperator{\Morse}{Mo}
\DeclareMathOperator{\Sec}{Sec} 
\DeclareMathOperator{\Crit}{Crit}
\DeclareMathOperator{\Span}{span}
\DeclareMathOperator{\ind}{ind}
\DeclareMathOperator{\Symp}{Symp}
\DeclareMathOperator{\Int}{int}
\DeclareMathOperator{\Id}{Id}
\DeclareMathOperator{\Flow}{Flow}
\DeclareMathOperator{\PD}{PD}
\DeclareMathOperator{\sgn}{sgn}
\newcommand{\R}{\mathbb{R}}
\newcommand{\Z}{\mathbb{Z}}
\newcommand{\Q}{\mathbb{Q}}
\newcommand{\disk}{\mathbb{D}}
\newcommand{\grad}{\nabla}
\newcommand{\bigO}{\mathcal{O}}
\newcommand{\Cinfty}{\mathcal{C}^{\infty}}
\newcommand{\Rthree}{(\R^{3},\xi_{std})}
\newcommand{\Lie}{\mathcal{L}}
\newcommand{\sphere}{\mathbb{S}}
\newcommand{\Circle}{\sphere^{1}}
\newcommand{\half}{\frac{1}{2}}
\newcommand{\be}{\begin{enumerate}}
\newcommand{\ee}{\end{enumerate}}
\newcommand{\Mxi}{(M,\xi)}
\newcommand{\MxiDivSet}{(\divSet, \xiDivSet)}
\newcommand{\unknot}{\Gamma_{U}}
\newcommand{\MxiUnknot}{(\Gamma_{U}, \xi_{\Gamma_{U}})}
\newcommand{\norm}[1]{\left\lVert#1\right\rVert}
\newcommand{\annulus}{\mathbb{A}}
\newcommand{\stdSphere}[1]{\left(\sphere^{#1}, \xi_{std}\right)}
\newcommand{\Leg}{\Lambda}
\newcommand{\Stable}{\mathcal{S}}
\newcommand{\Unstable}{\mathcal{U}}
\newcommand{\framing}{\mathfrak{f}}
\newcommand{\ribbon}{\mathcal{R}}
\newcommand{\thicc}[1]{\pmb{#1}}
\newcommand{\divSet}{\Gamma}
\newcommand{\xiDivSet}{\xi_{\divSet}}
\newcommand{\Wdivisor}{V}
\newcommand{\WdivisorLift}{\widetilde{V}}
\newcommand{\betaW}{\beta_{W}}
\newcommand{\betaWD}{\beta_{\Wdivisor}}
\newcommand{\omegaW}{\omega_{W}}
\newcommand{\omegaWD}{\omega_{\Wdivisor}}
\newcommand{\betaWDLift}{\beta_{\WdivisorLift}}
\newtheorem{thm}{Theorem}[section]
\newtheorem{theorem}[thm]{Theorem}
\newtheorem{ex}[thm]{Example}
\newtheorem{assump}[thm]{Assumptions}
\newtheorem{defn}[thm]{Definition}
\newtheorem{lemma}[thm]{Lemma}
\newtheorem{rmk}[thm]{Remark}
\title{Stabilization of divisors in high-dimensional contact manifolds}
\author{Russell Avdek}
\date{\today}
\begin{document}
	
\begin{abstract}
A stabilization operation is defined for $\codim=2$ contact submanifolds $\Gamma$ in $\dim \geq 5$ contact manifolds $\Mxi$. The definition is such that (1) a given $\Mxi$ is overtwisted iff its standard contact unknot is stabilized and (2) contact stabilization preserves the formal contact isotopy class and intrinsic contact structure of a divisor. We prove that many $\Gamma$ are non-simple.
\end{abstract}

\maketitle
\numberwithin{equation}{subsection}
\setcounter{tocdepth}{1}
\tableofcontents

\section{Introduction}

This article concerns the tightness of contact manifolds $\Mxi$ of $\dim \geq 5$ as well as their $\codim=2$ contact submanifolds, $\divSet \subset \Mxi$. Such $\divSet$ are known as \emph{transverse links} when $\dim M =3$. We will call them \emph{contact divisors}, or simply \emph{divisors}, in general. The \emph{intrinsic contact structure} of a divisor $\Gamma$ is $\xiDivSet=\xi \cap T\Gamma$ and its \emph{formal contact isotopy class} is denoted $[\divSet]$. See \S \ref{Sec:Background} for background. Unless otherwise indicated, $\divSet$ and $M$ are assumed closed.

In \cite{CMP:OT} Casals, Murphy, and Presas prove the equivalence of various notions of overtwistedness for $\dim \geq 5$ contact manifolds. Here are a few such notions which are especially relevant for this article.

\begin{thm}[\cite{CMP:OT}]\label{Thm:CMP}
Let $\Mxi$ be a $\dim \geq 5$ contact manifold. Then the following are equivalent:
\be
\item $\Mxi$ is overtwisted in the sense of Borman, Eliashberg, and Murphy \cite{BEM:Overtwisted}.
\item The standard Legendrian unknot $\Leg_{U} \subset \Mxi$ is loose in the sense of Murphy \cite{Murphy:Loose}.
\item $\Mxi$ can be expressed as a contact anti-surgery along a loose Legendrian sphere in some $(M', \xi')$.\footnote{Contact anti-surgeries are also known as contact $+1$ surgeries. See \S \ref{Sec:WeinsteinSurgery}.}
\ee
\end{thm}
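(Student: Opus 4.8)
The plan is to prove the cycle of implications $(1)\Rightarrow(2)\Rightarrow(3)\Rightarrow(1)$, leaning throughout on the two relevant $h$-principles: the Borman--Eliashberg--Murphy classification of overtwisted contact structures by their underlying formal (almost contact) data, and Murphy's classification of loose Legendrians by formal Legendrian data. The unifying idea is to produce three explicit local contact models --- a model overtwisted ball, a model loose chart, and a model anti-surgery handle --- and to show that each condition forces the next model to appear in some chart. The $h$-principles then let us pass freely between honest geometric embeddings and formal homotopy data whenever a construction is only available formally.

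For $(1)\Rightarrow(2)$, assume $\Mxi$ is overtwisted, so it contains a copy of the BEM model overtwisted ball. Inside this explicit model I would locate a copy of the standard Legendrian unknot $\Leg_{U}$ and build by hand a loose chart for it, i.e.\ a chart contactomorphic to the product of a stabilized (zig-zag) one-dimensional front with a Darboux ball in the complementary directions, of size exceeding the action threshold in Murphy's definition; the controlled Reeb dynamics of the overtwisted model are what make the stabilizing zig-zag visible. For $(2)\Rightarrow(3)$, assume $\Leg_{U}$ is loose. I would perform Legendrian ($-1$) surgery along $\Leg_{U}$ to obtain an auxiliary $(M',\xi')$; since $-1$ and $+1$ surgeries along a cancelling pair are inverse operations, $\Mxi$ is recovered from $(M',\xi')$ by contact anti-surgery ($+1$ surgery) along the dual Legendrian sphere $\Leg$ of the handle. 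Looseness of $\Leg$ is inherited from that of $\Leg_{U}$ by transporting a loose chart across the formal isotopy, using the $h$-principle for loose Legendrians, yielding the required presentation.

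For $(3)\Rightarrow(1)$, assume $\Mxi$ is the result of $+1$ surgery along a loose Legendrian sphere $\Leg\subset(M',\xi')$. Working in the explicit Weinstein/contact-handle model for anti-surgery along a loose chart, I would exhibit a BEM overtwisted ball inside the surgery region: anti-surgery along a loose Legendrian behaves like a Lutz twist and manufactures an overtwisted disk. Producing a genuine model overtwisted ball (rather than merely a formally overtwisted germ) then certifies that $\Mxi$ is overtwisted.

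The main obstacle is the geometric core shared by the first and last implications --- honestly constructing a loose chart inside the overtwisted model, and conversely an overtwisted ball inside the anti-surgery model. These are dimension-aware contact constructions demanding careful control of the contact form and its Reeb flow, and the looseness-transfer step in $(2)\Rightarrow(3)$ must be checked rather than assumed. The two $h$-principles trivialize the \emph{formal} matching, but the three seed models must be built and reconciled by hand.
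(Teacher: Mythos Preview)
This theorem is not proved in the paper at all: it is stated as a result of Casals, Murphy, and Presas \cite{CMP:OT} and simply cited as background. There is no proof in the present paper to compare your proposal against.

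That said, as a sketch of the argument in \cite{CMP:OT}, your outline is reasonable in broad strokes but you should be aware that the genuinely hard step is $(3)\Rightarrow(1)$, and your description of it is too vague to count as a plan. Saying that anti-surgery along a loose Legendrian ``behaves like a Lutz twist and manufactures an overtwisted disk'' is the entire content of the theorem, not a proof of it. In \cite{CMP:OT} this step goes through an explicit construction of an open book whose monodromy contains a negative Dehn twist factor (coming from the loose chart), and then an identification of a plastikstufe/overtwisted ball inside such an open book. Your $(2)\Rightarrow(3)$ step also has a gap: the dual sphere $\Lambda$ lives in $(M',\xi')$, not in $\Mxi$, and its looseness does not follow by ``transporting a loose chart across the formal isotopy'' from $\Lambda_U$; you need a separate argument that the belt sphere of a handle attached along a loose Legendrian is itself loose.
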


Applying the Legendrian isotopy classification of Murphy's \cite{Murphy:Loose}, we can replace the ``loose Legendrians'' in the above statements with the ``stabilized Legendrians'' of Ekholm, Etnyre, and Sullivan \cite{EES:Nonisotopic}. With this slight modification, the statement of the theorem agrees with the equivalence of analogous formulations of overtwistedness for contact $3$-manifolds. However, there remains a formulation of overtwistedness in $\dim=3$ without an established high-dimensional analogue, cf. \cite{Etnyre:KnotNotes}.

\begin{theorem}\label{Thm:Dim3Stab}
A $\Mxi$ of $\dim = 3$ is overtwisted iff its standard contact unknot is stabilized.
\end{theorem}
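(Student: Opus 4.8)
The plan is to reduce Theorem~\ref{Thm:Dim3Stab} to classical transverse knot theory in contact $3$-manifolds. First I would check that, in dimension $3$, the stabilization operation of the paper specializes to ordinary transverse stabilization, and that the standard contact unknot $\unknot$ is the transverse unknot of maximal self-linking number $\mathrm{sl}(\unknot) = -1$ sitting in a Darboux ball. Granting this, the assertion ``$\unknot$ is stabilized'' means exactly that $\unknot$ admits a transverse destabilization: $\unknot$ is transversely isotopic to the stabilization $S(T')$ of a transverse unknot $T'$ with $\mathrm{sl}(T') = \mathrm{sl}(\unknot) + 2 = +1$.

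The direction ``stabilized $\Rightarrow$ overtwisted'' is then immediate from the Bennequin inequality. If $\Mxi$ were tight, every transverse unknot would satisfy $\mathrm{sl} \le 2g - 1 = -1$, so an $\mathrm{sl} = +1$ destabilization $T'$ of $\unknot$ could not exist; contrapositively, the existence of such a $T'$ forces $\Mxi$ to be overtwisted. Equivalently, a stabilized $\unknot$ exhibits a transverse unknot of self-linking $+1$, which no tight structure admits.

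For the converse ``overtwisted $\Rightarrow$ stabilized'' I would use convex surface theory. From an overtwisted disk I would first produce a transverse unknot $T'$ with $\mathrm{sl}(T') = +1$, for instance as a suitably oriented transverse push-off of its Legendrian boundary (which has $\tb = 0$). I would then put a Seifert disk $D$ for $T'$, or for $\unknot$, into convex position and analyze its dividing set $\Gamma_D$. By Giroux's criterion overtwistedness is signalled by a homotopically trivial closed dividing curve, which on a disk is automatic; such a curve carries a bypass along the transverse boundary whose attachment realizes a destabilization, raising $\mathrm{sl}$ by $2$. Tracking $\Gamma_D$ through a single transverse stabilization of $T'$, I would argue that the resulting $\mathrm{sl} = -1$ unknot admits a convex spanning disk with connected dividing set in standard Darboux form, hence is transversely isotopic to $\unknot$, giving $\unknot \simeq S(T')$.

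The hard part will be this last identification $S(T') \simeq \unknot$ carried out inside an overtwisted manifold, where transverse unknots need not be transversely simple: I must ensure that stabilizing the $\mathrm{sl} = +1$ unknot coming from the overtwisted disk returns the genuinely standard model, not merely some $\mathrm{sl} = -1$ unknot. I expect to control this by localizing the stabilization in a Darboux ball and invoking the uniqueness of the standard transverse model there, together with the convex normal form for $D$; the delicate bookkeeping lies in the self-linking formula expressing $\mathrm{sl}$ through the Euler characteristics of the positive and negative regions cut out by $\Gamma_D$, and in matching signs with the paper's stabilization model. As a cross-check I would compare with the Legendrian picture, in which $\unknot$ is a transverse push-off of the standard Legendrian unknot and the statement becomes the dimension-$3$ shadow of the loose/stabilized equivalence recorded in Theorem~\ref{Thm:CMP}.
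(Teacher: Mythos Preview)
The paper does not prove Theorem~\ref{Thm:Dim3Stab}; it is stated as background, attributed to classical $3$-dimensional transverse knot theory via the citation \cite{Etnyre:KnotNotes}. So there is no ``paper's own proof'' to compare against---you are supplying one where the paper simply quotes a known fact.

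One point of confusion in your proposal: you write that ``in dimension $3$, the stabilization operation of the paper specializes to ordinary transverse stabilization.'' This is false, and the paper goes out of its way to say so. The paper's stabilization is only defined for $\dim M \ge 5$ (see \S\ref{Sec:StabilizationDef}, ``We henceforth assume that $n\ge 2$''), and \S\ref{Sec:HighDimLowDim} explicitly computes that forcing the high-dimensional recipe on a transverse link produces a crossing change \emph{plus} a classical transverse stabilization---not the classical operation alone. Theorem~\ref{Thm:Dim3Stab} is stated before the paper's new definition and refers to the classical notion; your interpretation of what is to be proved is correct, but your justification for it is not.

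On the substance: your Bennequin argument for ``stabilized $\Rightarrow$ overtwisted'' is the standard one and is fine. For the converse, your convex-surface outline is plausible but, as you yourself flag, the identification $S(T')\simeq\unknot$ is left as a hope rather than an argument. A much cleaner route---and the exact $3$-dimensional shadow of what the paper does in higher dimensions via \cite{CP:TransverseHprinciple} (see Lemma~3.8)---is to invoke the classification of transverse knots with overtwisted complements \cite{Etnyre:OTComplement}: in an overtwisted $\Mxi$ the standard $\unknot$ sits in a Darboux ball, so its complement is overtwisted; any $\mathrm{sl}=+1$ transverse unknot $T'$ built from an overtwisted disk likewise has overtwisted complement, hence so does $S(T')$; then $S(T')$ and $\unknot$ are transverse unknots with the same $\mathrm{sl}=-1$ and overtwisted complements, and the $h$-principle identifies them. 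This replaces your bypass bookkeeping with a single appeal to a known classification.
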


\begin{figure}[h]
\begin{overpic}[scale=.4]{t_unknot.eps}
\end{overpic}
\caption{The $\unknot$ of $\dim=1$  in the ($x, t$) projection of $\R^{3}_{x, y, t}$ with contact form $dt - ydx$.}
\label{Fig:TUnknot}
\end{figure}

When $\dim M=3$, the standard contact unknot is the well-known transverse unknot of self-linking number $-1$ shown in Figure \ref{Fig:TUnknot}. In any odd dimension the standard contact unknot is generally defined as follows: Let
\begin{equation*}
\left(\disk^{2n+1}\subset \R_{t}\times \R^{n}_{x}\times \R^{n}_{y}, \xi_{std}\right), \quad \xi_{std} = \ker \left(dt + \beta_{std}\right), \quad \beta_{std} = \sum (x_{i}dy_{i} - y_{i}dy_{i})
\end{equation*}
be a Darboux disk. The standard contact unknot in the Darboux disk is
\begin{equation*}
\unknot = \{ t = 0 \}\cap \partial \disk^{2n+1} \subset \left(\disk^{2n+1}, \xi_{std}\right).
\end{equation*}
For $\unknot$ inside of a general $\Mxi$, work within a Darboux disk in $\Mxi$. Intrinsically $\Gamma_{U}$ is the standard contact sphere of dimension $2n-1$, given by the boundary of the standard Liouville disk,
\begin{equation*}
\MxiUnknot \simeq \stdSphere{2n-1} = \partial \left(\disk^{2n}, \beta_{std}\right).
\end{equation*}

\subsection{Stabilization of divisors and overtwistedness in $\dim \geq 5$}

The purpose of this article is to define stabilization of high dimensional contact divisors so that the following theorem holds.

\begin{thm}\label{Thm:MainOT}
A $\dim \geq 5$ contact manifold $\Mxi$ is overtwisted iff $\unknot \subset \Mxi$ is stabilized. Stabilization preserves the intrinsic contact structure and formal contact isotopy class of a $\codim=2$ contact submanifold inside of a $\dim \geq 5$ contact manifold.
\end{thm}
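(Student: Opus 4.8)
The plan is to establish the theorem's two claims in turn: first the preservation statement, which is local and essentially elementary, and then the overtwistedness equivalence, which I would reduce to Theorem~\ref{Thm:CMP}.

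For the preservation of the intrinsic contact structure and the formal contact isotopy class, the essential point is that stabilization alters a divisor $\divSet$ only inside a neighborhood $\ribbon$ modeled on a fixed local picture and leaves $\divSet$ untouched elsewhere. Intrinsically, I would check that the local model of the stabilized piece is, as a contact manifold with boundary, contactomorphic to the unmodified piece of $(\divSet,\xiDivSet)$, so that reassembling the divisor yields an abstract contact manifold contactomorphic to $(\divSet,\xiDivSet)$; this reduces to a computation in the standard model. For the formal class $[\divSet]$ one argues that the inserted modification is smoothly isotopic to the trivial one --- the zig-zag of the model can be smoothed away in the ambient smooth category --- and that the conformal-symplectic structure on the normal bundle, which is the remaining formal datum, is homotoped back to the original along this smooth isotopy. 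Hence no formal invariant changes, even though the genuine contact germ along $\divSet$ does; that discrepancy is precisely what the equivalence detects.

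For the equivalence, the first step is to combine Theorem~\ref{Thm:CMP} with Murphy's isotopy classification of loose Legendrians \cite{Murphy:Loose}: as remarked above, $\Mxi$ is overtwisted exactly when the standard Legendrian unknot $\Leg_{U}$ is Legendrian isotopic to a stabilized Legendrian in the sense of \cite{EES:Nonisotopic}. The task then becomes a local dictionary between the two stabilization operations. I expect to set this up through a Legendrian disk $\LegDisk$ in the Darboux ball with boundary $\LegBoundary \subset \unknot$ on the divisor, arranged so that the front of $\LegDisk$ carries the same zig-zag that defines the divisor stabilization. One then verifies that performing the divisor stabilization of $\unknot$ is equivalent to applying the Ekholm--Etnyre--Sullivan stabilization to the Legendrian associated to $\LegDisk$, namely $\Leg_{U}$. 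Running this correspondence in both directions yields the chain $\unknot$ stabilized $\Leftrightarrow$ $\Leg_{U}$ loose $\Leftrightarrow$ $\Mxi$ overtwisted.

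The main obstacle will be this local dictionary in dimensions $\geq 5$, where the divisor (codimension $2$) and the Legendrian (codimension $n+1$) no longer coincide as they do when $\dim M = 3$. The work is to show that a single front modification supported in one Darboux chart simultaneously realizes both stabilizations, and that the equivalence is faithful in both directions; the harder implication is extracting a genuine loose chart from a stabilized divisor, where I must keep the normal and formal data under control so that the reduction to \cite{Murphy:Loose} and Theorem~\ref{Thm:CMP} is legitimate.
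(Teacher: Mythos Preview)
Your treatment of the preservation claim is in the right spirit, though the paper's argument is more concrete: both $\divSet$ and its stabilization $\divSet'$ arise as the result of the \emph{same} contact anti-surgery along $\partial\Unstable_n=\partial\Unstable_n'\subset\divSet^+$, which immediately gives the intrinsic statement. For the formal isotopy the paper lifts the formal Legendrian isotopy between $\Unstable_n$ and $\Unstable_n'$ to a family of almost-contact structures on $M$ and then exponentiates normal disks to get a formal contact isotopy of the negative boundaries of the handles; this is essentially the strategy of \cite[Lemma 3.4]{CasalsEtnyre}.

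The real problem is your plan for the equivalence. The easy direction (overtwisted $\Rightarrow$ $\unknot$ stabilized) does follow from \cite{CP:TransverseHprinciple} as the paper notes, but your proposed ``local dictionary'' for the hard direction is a genuine gap. Recall the definition: $\unknot$ is stabilized means there exist some $\divSet^+$ and an ambient critical handle $H_n$ with $\partial^+H_n\subset\divSet^+$, whose unstable disk $\Unstable_n$ is Legendrian-stabilized in a cusp chart disjoint from $\divSet^+$, and $\unknot=(\divSet^+\setminus\partial^+H_n)\cup\partial^-H_n$. The stabilized Legendrian here is $\Unstable_n$, a disk with boundary on $\divSet^+$---not on $\unknot$, and bearing no a priori relation to the Legendrian unknot $\Leg_U$. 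There is no local mechanism that converts a loose chart for $\Unstable_n$ (sitting somewhere in $M\setminus\divSet^+$) into a loose chart for $\Leg_U$; the handle $H_n$, the auxiliary divisor $\divSet^+$, and the cusp chart can be arranged essentially arbitrarily in $M$. You yourself flag this extraction as ``the harder implication,'' but offer no mechanism for it, and I do not see one.

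The paper's proof of this direction is global and uses entirely different machinery developed in \S\ref{Sec:FiberSum}--\ref{Sec:MainProofs}: one takes a push-off of $\unknot$, forms the contact fiber sum along the two copies, and shows (Theorem~\ref{Thm:FiberSumOT}) that this fiber sum is overtwisted because the stabilized unstable disk forces a contact anti-surgery along a loose Legendrian sphere in an associated Weinstein cobordism. Since the fiber sum of two unknots is $\Mxi\#2(\Circle\times\sphere^{2n},\xi_{std})$, a subcritical cobordism back to $\Mxi$ (Lemma~\ref{Lemma:KillOneHandleOT}) transports overtwistedness to $\Mxi$ via Lemma~\ref{Lemma:SubcriticalPreservesOT}. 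None of this is visible from a Darboux-chart analysis.
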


The first statement is the high-dimensional analogue of Theorem \ref{Thm:Dim3Stab} and the second is analogous to Legendrian stabilization preserving formal Legendrian isotopy classes \cite{EES:Nonisotopic, Murphy:Loose}. By contrast, formal isotopy classes of transverse and Legendrian links in $\Mxi$ of $\dim=3$ are modified by stabilization. There are many (a priori different) ways to stabilize divisors $\Gamma$ in $\dim \geq 5$ contact manifolds. As shown in \S \ref{Sec:StandardStabilization}, certain stabilizations can always be performed within an arbitrarily small neighborhood of a $\Gamma$.

The fact that $\unknot$ is a stabilization if $\Mxi$ is overtwisted follows immediately from Cordona and Presas' parametric $h$-principle for contact divisors with overtwisted complements \cite[Corollary 3.5]{CP:TransverseHprinciple}. Since $\unknot$ is contained in a Darboux ball, the overtwistedness of $\Mxi$ implies the overtwistedness of $(M \setminus \unknot, \xi)$ and \cite{CP:TransverseHprinciple} applies. Likewise whenever a $\divSet$ has an overtwisted complement then it is contact isotopic to each of its stabilizations.

\subsection{Non-simplicity of contact divisors}

In addition to relating divisors to overtwistedness of the $\Mxi$ which contain them, we are interested in understanding the geography of $\Gamma$ in a fixed formal isotopy class $[\divSet]$. In addition to \cite{CP:TransverseHprinciple}, recent results of Casals, Pancholi, and Presas \cite{CPP:Whitney}, Honda and Huang \cite[Corollary 1.3.6]{HH:Convex}, Lazarev \cite{Lazarev:Maximal}, and Pancholi and Pandit \cite{PP:IsoContact} address the existence of (genuine) contact submanifolds $\divSet$ within a fixed (formal) class $[\divSet]$.

\begin{defn}
We say that $\divSet$ is \emph{non-simple} if there exist a $\divSet' \subset \Mxi$ with $[\divSet]=[\divSet']$, $(\divSet', \xi_{\divSet'})$ isomorphic to $\MxiDivSet$, and for which $\divSet'$ is not contact isotopic to $\divSet$ within $\Mxi$. A class $[\divSet]$ is \emph{non-simple} is it has a non-simple contact representative.
\end{defn}

The first examples of non-simple contact submanifolds in $\Mxi$ of $\dim=3$ are due to Birman and Menasco \cite{BirmanMenasco}. For further references on the low-dimensional case, see \cite{Etnyre:KnotNotes} or the more recent \cite{CasalsEtnyre}.

As will be evident from the techniques used throughout, this article is largely inspired by Casals and Etnyre's \cite{CasalsEtnyre} in which the first examples of non-simple contact divisors in $\Mxi$ of $\dim \geq 5$ are constructed. See Zhou's \cite{Zhou:Embeddings} and C\^{o}t\'{e} and Fauteux-Chapleau's \cite{CFC:HCSubmanifold} for further examples. The divisors of \cite{CasalsEtnyre, Zhou:Embeddings} are implicitly unit cotangent bundles of spheres, $(\sphere^{\ast}\sphere^{n}, \xi_{can} = \ker \beta_{can})$, with $\beta_{can} = \sum p_{i}dq_{i}$ the canonical $1$-form. Using our notion of stabilization, we prove that many contact divisors are non-simple. For example, the following is immediate from Theorem \ref{Thm:MainOT} and the existence of contact stabilizations.

\begin{thm}\label{Thm:UnknotNonsimple}
If $\Mxi$ is tight, then $\unknot$ is non-simple.
\end{thm}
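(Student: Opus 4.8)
The plan is to exhibit an explicit competitor to $\unknot$ by stabilizing it, verify that this competitor shares the intrinsic contact structure and formal class of $\unknot$, and then rule out a contact isotopy between the two using tightness together with Theorem \ref{Thm:MainOT}. First I would invoke the existence of contact stabilizations established in \S\ref{Sec:StandardStabilization} to form a stabilization $\unknot'$ of $\unknot$. By the second assertion of Theorem \ref{Thm:MainOT}, stabilization preserves both the intrinsic contact structure and the formal contact isotopy class of a $\codim = 2$ submanifold, so $(\unknot', \xi_{\unknot'})$ is isomorphic to $\MxiUnknot$ and $[\unknot'] = [\unknot]$. Thus $\unknot'$ satisfies the first two requirements in the definition of non-simplicity, and the only thing left to verify is that $\unknot'$ is not genuinely contact isotopic to $\unknot$ inside $\Mxi$.

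Next I would argue by contradiction. The property of being stabilized is invariant under contact isotopy, essentially by definition: a divisor is stabilized precisely when it is contact isotopic to the stabilization of some divisor, so this property is transported along any contact isotopy. Since $\unknot'$ is a stabilization by construction, a hypothetical contact isotopy carrying $\unknot'$ to $\unknot$ inside $\Mxi$ would exhibit $\unknot$ itself as stabilized. By the first assertion of Theorem \ref{Thm:MainOT}, a stabilized $\unknot$ forces $\Mxi$ to be overtwisted, contradicting the standing hypothesis that $\Mxi$ is tight. Hence $\unknot'$ is not contact isotopic to $\unknot$, and together with the preceding paragraph this shows that $\unknot$ is non-simple.

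The step I expect to require the most care is not any computation but the correct bookkeeping of the two directions of the ``iff'' in Theorem \ref{Thm:MainOT}. Its second clause is what guarantees that $\unknot'$ is an \emph{admissible} competitor, lying in the same formal class with the same intrinsic structure; the contrapositive of its first clause — tightness implies $\unknot$ is not stabilized — is what obstructs the isotopy. The one genuinely non-formal point is to make explicit that ``stabilized'' is a contact-isotopy invariant of the divisor, so that the hypothetical isotopy really does exhibit $\unknot$ as stabilized rather than merely as contact isotopic to something stabilized; once that is pinned down the argument is immediate.
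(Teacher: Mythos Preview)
Your proposal is correct and follows essentially the same route as the paper: construct a stabilization $\unknot'$ of $\unknot$ (as in \S\ref{Sec:StandardStabilization}), use the second clause of Theorem~\ref{Thm:MainOT} to see that $\unknot'$ is an admissible competitor, and use the contrapositive of the first clause to rule out a contact isotopy. The paper treats the contact-isotopy invariance of ``stabilized'' as evident and simply says the result is immediate from Theorem~\ref{Thm:MainOT} together with the existence of stabilizations; your explicit remark about this invariance is a reasonable clarification but not a departure in strategy.
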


We similarly prove that many other contact divisors are non-simple by establishing that they are not stabilizations using symplectic cobordism arguments.

\begin{thm}\label{Thm:HypersurfaceNonSimple}
Suppose that $\divSet \subset \Mxi$ bounds a Liouville hypersurface in some $\Mxi$ of $\dim \geq 5$. If $\Mxi$ is weakly fillable or has non-vanishing contact homology (over $\Q$), then $\divSet$ is not stabilized and is therefore non-simple.
\end{thm}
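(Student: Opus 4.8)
The plan is to argue by contraposition: assuming $\divSet$ is stabilized, I would produce in $\Mxi$ a filling obstruction incompatible with both weak fillability and non-vanishing contact homology, and then deduce non-simplicity from Theorem \ref{Thm:MainOT}. Write $\hypersurface$ for the Liouville hypersurface with $\partial \hypersurface = \divSet$. The first step is to combine the local model of the stabilization near $\divSet$ (from \S\ref{Sec:StandardStabilization}) with $\hypersurface$ to assemble a bordered Legendrian open book (bLob) whose page is $\hypersurface$ and whose binding is $\divSet$: the stabilizing collar supplies the normal half-disk family of pages rotating around $\divSet$, while $\hypersurface$ supplies the page itself. The point of the hypothesis that $\divSet$ bounds $\hypersurface$ is exactly that it provides a globally defined page for the bLob; a generic stabilized divisor need not bound one.

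Having embedded a bLob, I would invoke the two standard overtwisted-disk obstructions. For weak fillability, the neck-stretching argument of Massot, Niederkrüger, and Wendl shows that a $\dim \geq 5$ contact manifold containing a bLob admits no weak symplectic filling: one studies the moduli space of holomorphic disks with boundary on the bLob and asymptotic to the binding, proving it is a compact $1$-manifold with a single boundary point, a contradiction. For contact homology, the same disks cap off the Reeb orbits created along $\divSet$ to give, in the symplectization, a contractible Reeb orbit $\orbit$ bounding a unique rigid holomorphic plane; in the spirit of Bourgeois and Niederkrüger this forces $\partialCH \orbit$ to be a unit, so the contact homology algebra of $\Mxi$ vanishes over $\Q$. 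Either conclusion contradicts the hypotheses, so $\divSet$ is not stabilized.

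Finally, for non-simplicity: by the existence of contact stabilizations (\S\ref{Sec:StandardStabilization}) there is a stabilization $\divSet'$ of $\divSet$, and by Theorem \ref{Thm:MainOT} it satisfies $[\divSet']=[\divSet]$ with $(\divSet',\xi_{\divSet'})$ isomorphic to $\MxiDivSet$. Since \emph{being stabilized} is preserved by ambient contact isotopy, while $\divSet$ is not stabilized and $\divSet'$ is, the two cannot be contact isotopic in $\Mxi$; hence $\divSet$ is non-simple.

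I expect the main obstacle to be the first step: checking that the local stabilization model and the Liouville hypersurface glue to a genuine bLob, i.e. that the contact form restricts so that the pages foliate correctly with binding $\divSet$ and the required monodromy. A secondary difficulty is the holomorphic-curve analysis underlying the two obstructions --- transversality and SFT compactness for the disk moduli space, and the identification of the rigid plane for the contact homology computation --- where in the $\dim \geq 5$, $\Q$-coefficient setting one must ensure the relevant moduli spaces are cut out transversely, or else appeal to the established obstruction theorems as black boxes.
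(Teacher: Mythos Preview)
Your approach diverges sharply from the paper's, and the divergence occurs at a genuine gap. The paper does \emph{not} attempt to find any overtwisted-type object (plastikstufe, bLob, or otherwise) inside $\Mxi$ itself. Instead it passes to a fiber sum: take two disjoint copies of $\Mxi$, each containing its copy of $\divSet$ bounding its Liouville hypersurface $V$, and apply the fiber sum cobordism of Theorem~\ref{Thm:FiberSumCobordism}. This produces a Liouville cobordism $W_\#$ with concave end $\Mxi\sqcup\Mxi$ and convex end the contact fiber sum along the two copies of $\divSet$. Theorem~\ref{Thm:FiberSumOT} then says: if $\divSet$ is stabilized, the convex end $\partial^+W_\#$ is overtwisted. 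The contradiction with $CH(\Mxi)\neq 0$ is immediate from the cobordism map $CH(\partial^+W_\#)\to CH(\Mxi)^{\otimes 2}$. The weak-filling case requires a further argument: a $\Z/2$ involution $T$ of $W_\#$ swapping the two factors is used to show the cohomological obstruction to extending $\omega_M\sqcup\omega_M$ as a closed form over $W_\#$ vanishes (the involution reverses orientation on attaching spheres, forcing the integral obstruction to equal its own negative).

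Your proposed first step --- gluing the Liouville hypersurface $W$ to ``the local model of the stabilization'' to produce a bLob --- does not work as stated, and this is not merely a technical wrinkle. The stabilization data consists of an ambient Weinstein handle $H_n$ with a loose unstable disk $\Unstable_n'$; there is no ``normal half-disk family of pages rotating around $\divSet$'' present in this data. The hypersurface $W$ is a single page, not an open-book family, and the looseness of $\Unstable_n'$ lives in a cusp chart disjoint from $\divSet^+$, not in a normal neighborhood of $\divSet$. So the two pieces you want to glue do not match along any interface that would yield a bLob. Note also that if your construction succeeded it would exhibit a bLob in $\Mxi$, hence (via \cite{MNW13, CMP:OT}) prove the theorem under the weaker hypothesis that $\Mxi$ is merely tight --- but the paper explicitly states this as an open conjecture, which is further evidence that no such direct obstruction is available. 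Your final paragraph on non-simplicity is correct and matches the paper's reasoning.
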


Of course, the hypothesis on $\Mxi$ is satisfied for the most important cases of the standard sphere $\Mxi = \stdSphere{2n+1}$ and unit cotangent bundles of smooth manifolds. We conjecture that the theorem holds under the more general assumption that $\Mxi$ is tight. Theorem \ref{Thm:HypersurfaceNonSimple} is analogous to the fact that a stabilized transverse link in a tight contact $3$-manifold cannot bound a Liouville hypersurface.\footnote{This fact follows Eliashberg's Thurston-Bennequin inequality  \cite{Eliash:TBInequality} together with the effect of contact stabilization on self-intersection numbers. See \cite{Etnyre:KnotNotes}.}

Contact submanifolds $\divSet$ bounding Liouville hypersurfaces are central objects in contact topology. Examples include the unknots $\unknot$, contact push-offs of Legendrians \cite{CasalsEtnyre} (see \S \ref{Sec:HypersurfaceDivisor}), and bindings of supporting open books \cite{Giroux:ContactOB,BHH:OB, Sackel:Handle}. Bindings include Milnor links \cite{Milnor:Singular} and $\divSet$ in prequantization spaces associated to Donaldson divisors in closed, integral symplectic manifolds \cite{CDVK:BoothbyWang}. Independent of the tightness or overtwistedness of $\Mxi$, bindings of open books are never stabilized.

\begin{thm}\label{Thm:BindingNonStabilized}
The binding of a supporting open book for a $\Mxi$ of $\dim \geq 5$ is not stabilized. Consequently every such $\Mxi$ has infinitely many non-simple $\divSet$.
\end{thm}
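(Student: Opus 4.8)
The plan is to prove the obstruction directly: a stabilized divisor carries a holomorphic-curve obstruction, and the open book supplies enough symplectic structure to see that this obstruction cannot vanish for a binding, \emph{without} any hypothesis on $\Mxi$. I would first isolate, from the local model of \S\ref{Sec:StandardStabilization}, the mechanism by which stabilization is detected. By analogy with Theorem \ref{Thm:HypersurfaceNonSimple} and with the $\dim=3$ Bennequin picture, a stabilized $\divSet$ bounding a Liouville hypersurface should produce a rigid punctured holomorphic curve---a ``stabilizing disk'' supported in the stabilization neighborhood---whose algebraic count forces a distinguished class in a relative contact-homology-type invariant of $(M,\divSet)$ to vanish (equivalently, it supplies an augmentation that kills a unit). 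In Theorem \ref{Thm:HypersurfaceNonSimple} the contradiction is closed off using a filling of, or nonvanishing contact homology of, the ambient $\Mxi$; the point here is to obtain the same cap \emph{intrinsically} from the open book.

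Concretely, write the open book as $\OB$ with binding $\binding=\divSet$ and Liouville page $\Sigma$, so that $\divSet$ bounds the page as a Liouville hypersurface and the complement $M\setminus\binding$ is the mapping torus of $\AOB$, fibered over $\Circle$ with symplectic fibers. The second step is to build, from this data, a Liouville (or Weinstein) cobordism of the binding complement together with a contact form adapted to the open book, whose Reeb dynamics consist of the binding orbits and page-interior chords and is therefore explicitly controlled. This is the analogue of the ambient filling used in Theorem \ref{Thm:HypersurfaceNonSimple}, but manufactured from the page and monodromy alone; crucially it exists regardless of whether $\Mxi$ is tight, fillable, or overtwisted, since it never sees the ambient contact topology away from a neighborhood of $\binding$.

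The third step is the nonvanishing. In the open-book cap the page $\Sigma$ is an exact Liouville domain, so its symplectic invariants (e.g. the unit in its symplectic homology, or the page's fundamental class) are nonzero, and I would show that the hypothetical stabilizing disk would make precisely this nonzero class vanish---the desired contradiction. Thus $\binding$ is not stabilized. For the consequence, every $\Mxi$ of $\dim\geq5$ carries a supporting open book (Giroux), whose binding $\binding$ is therefore not stabilized; by \S\ref{Sec:StandardStabilization} one may stabilize $\binding$ inside an arbitrarily small neighborhood to obtain $\binding'$ with $[\binding']=[\binding]$ and $(\binding',\xi_{\binding'})\cong\MxiDivSet$ by Theorem \ref{Thm:MainOT}, yet $\binding'\not\simeq\binding$, since otherwise $\binding$ would be isotopic to a stabilization. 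Hence $\binding$ is non-simple, and running this over the infinitely many supporting open books with pairwise non-isotopic bindings (e.g. via iterated positive open-book stabilizations) produces infinitely many non-simple $\divSet$.

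The hard part will be the middle step: producing the open-book cobordism with a contact form whose Reeb orbits and holomorphic curves are controlled well enough---compactness and transversality, in a possibly overtwisted ambient manifold---that the relevant count is defined and the stabilizing-disk contribution can be identified with the vanishing of the page's unit. Making ``stabilized'' interact with this count requires pinning down the local holomorphic model of the stabilization from \S\ref{Sec:StandardStabilization} precisely enough to locate its asymptotics relative to the binding orbits; this, rather than the formal bookkeeping of the non-simplicity deduction, is where the real work lies.
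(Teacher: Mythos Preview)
Your proposal is a research outline, not a proof. The central gap is that you never actually construct the invariant you want to use: you posit ``a relative contact-homology-type invariant of $(M,\divSet)$'' and a ``stabilizing disk'' whose count forces a unit to vanish, but neither object is defined, and you yourself flag that producing the open-book cap with controlled Reeb dynamics and transversality in a possibly overtwisted ambient manifold is ``the hard part.'' That is not a technical detail to be filled in later; it is the entire content of the theorem. In particular, the paper's definition of stabilization is via an ambient Weinstein anti-surgery whose unstable disk is loose---there is no evident local holomorphic disk attached to this model, and your analogy with the $3$-dimensional Bennequin inequality does not transfer, since in high dimensions stabilization preserves all formal invariants. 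Without a concrete curve count and a concrete nonvanishing statement, nothing here can be made rigorous.

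The paper's argument is entirely different and avoids direct holomorphic-curve analysis. It uses the fiber-sum machinery of \S\ref{Sec:FiberSum}: take two copies of $\Mxi$ and fiber-sum along the two copies of the binding $\divSet$, framed by the pages. By Theorem~\ref{Thm:FiberSumOT}, if $\divSet$ were stabilized this fiber sum would be overtwisted. But \cite{Avdek:Liouville} identifies this fiber sum as an $\Circle$-bundle of convex hypersurfaces $S$, where $S$ is the doubled page $\partial([-1,1]_{t}\times\Wdivisor)$; the neighborhood $\R\times S$ is the contactization of the page, which is tight (it sits inside the Liouville-fillable open book with trivial monodromy), and it covers the fiber sum. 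Hence the fiber sum is tight, a contradiction. The infinitude of non-simple divisors then comes from iterated positive open-book stabilizations, distinguished by $\rank H_{n-1}(\divSet,\Z/2)$. The key point you are missing is that the obstruction to being stabilized is packaged entirely into overtwistedness of an auxiliary contact manifold (the fiber sum), and tightness of that manifold is established by a covering argument that uses only the page, never any global holomorphic curves in $\Mxi$.
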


When $\Mxi$ is overtwisted, the non-simplicity of bindings of supporting open books has previously been established in \cite[Theorem 12.7]{CFC:HCSubmanifold}.

All the non-simplicity results mentioned above concern contact divisors $\divSet$ whose implicit contact structures $\xiDivSet$ are symplectically fillable. The technique does not apply to high dimensional divisors which are implicitly overtwisted.

\begin{thm}\label{Thm:OTDivisor}
Suppose that $\Mxi$ has $\dim \geq 7$ and $\divSet \subset \Mxi$ is such that $\MxiDivSet$ is overtwisted. Then $\divSet$ is stabilized.
\end{thm}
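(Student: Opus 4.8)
The plan is to reduce the statement to the overtwistedness of the complement $(M\setminus\divSet,\xi)$ and then invoke flexibility. Indeed, as recorded in the discussion following Theorem~\ref{Thm:MainOT}, Cardona and Presas' parametric $h$-principle \cite{CP:TransverseHprinciple} implies that any divisor with overtwisted complement is contact isotopic to each of its stabilizations, and is therefore stabilized. Since stabilization preserves the intrinsic contact structure and formal isotopy class of $\divSet$, it suffices to prove the following transfer statement: \emph{if $\MxiDivSet$ is overtwisted and $\dim\divSet\geq5$, then $(M\setminus\divSet,\xi)$ is overtwisted}. Note $\dim(M\setminus\divSet)=\dim M\geq7$, so overtwistedness is meaningful there.

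To analyze the complement near $\divSet$, fix a contact form $\alpha$ for $\xiDivSet$ and use the standard neighborhood theorem for $\codim=2$ contact submanifolds to identify a neighborhood of $\divSet$ with $(\divSet\times\disk^{2},\ker(\alpha+r^{2}\,d\theta))$, where $(r,\theta)$ are polar coordinates on $\disk^{2}$ and $\divSet=\divSet\times\{0\}$. The complement then contains the open set $U=\divSet\times(\disk^{2}\setminus\{0\})$, and since overtwistedness of an open contact manifold is detected by a single overtwisted disk, it is enough to produce one overtwisted disk inside $U$. Observe that for any $p\neq0$ the slice $\divSet_{p}=\divSet\times\{p\}\subset U$ is again a contact submanifold isomorphic to $\MxiDivSet$ with trivialized conformal symplectic normal bundle; thus $U$ contains a full standard neighborhood of an overtwisted contact manifold.

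The heart of the argument is to convert the overtwistedness of $\divSet_{p}$ into an ambient overtwisted disk in $U$, and this is where the hypothesis $\dim\divSet\geq5$ (equivalently $\dim M\geq7$) enters. It allows me to apply Theorem~\ref{Thm:CMP} to $\MxiDivSet$, so that $\divSet$ carries a loose Legendrian unknot $\Leg_{U}$ in the sense of \cite{Murphy:Loose} and an overtwisted disk $\Delta\subset\divSet_{p}$ in the sense of \cite{BEM:Overtwisted}. I would then upgrade $\Delta$ to an overtwisted disk of $U$ by comparing local models: the neighborhood of $\Delta$ in $U$ is, after a contactomorphism, a $(2n+1)$-ball of the form $\Delta\times\disk^{2}$ with the normal directions spanned by $\partial_{r}\in\xi$ and the Reeb-like $\partial_{\theta}$, and I claim this germ is the $(2n+1)$-dimensional Borman–Eliashberg–Murphy overtwisted model. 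This rests on the product/suspension structure built into the inductive construction of the overtwisted model, in which the $(2n+1)$-dimensional model contains the $(2n-1)$-dimensional model as a contact submanifold with standard normal bundle.

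The main obstacle is precisely this last comparison: showing that the standard normal $\disk^{2}$-directions supply exactly the room needed to promote a codimension-$2$ overtwisted disk to an ambient one, i.e. that the standard neighborhood of an overtwisted contact manifold of $\dim\geq5$ is itself overtwisted. I expect to prove it by writing both germs explicitly and matching them after a deformation through the overtwisted $h$-principle, using looseness of $\Leg_{U}$ to absorb the discrepancy; equivalently, one can aim to present $U$ as a contact anti-surgery on a loose sphere and invoke part~(3) of Theorem~\ref{Thm:CMP}. The dimension hypothesis is essential throughout, since both the application of Theorem~\ref{Thm:CMP} to $\divSet$ and the product-stability of overtwisted disks require $\dim\divSet\geq5$; the excluded case $\dim M=5$, where $\divSet$ is a $3$-manifold, lies genuinely outside the reach of this method.
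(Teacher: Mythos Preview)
Your proposal has a genuine gap at precisely the step you flag as ``the main obstacle'': promoting a codimension-$2$ overtwisted disk in $\divSet_{p}$ to an ambient overtwisted disk in $U = \divSet \times (\disk^{2} \setminus \{0\})$. You offer no proof, only an expectation, and the claim that the BEM overtwisted model has an inductive ``suspension'' structure with the $(2n-1)$-dimensional model sitting as a contact divisor in the $(2n+1)$-dimensional one is not something you can simply assert. While the statement that a standard punctured neighborhood of an overtwisted $\dim \geq 5$ contact manifold is overtwisted is plausible (and related to Presas' plastikstufe constructions cited in \S\ref{Sec:FiberSumIntro}), establishing it rigorously is real work you have not done.

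The paper sidesteps this entirely by a different and more direct route: it never shows the complement is overtwisted. Instead it verifies the \emph{definition} of ``stabilized'' by exhibiting ambient Weinstein anti-surgery data with a stabilized handle. The key lemma, adapted from \cite[Lemma 4.4]{Eliash:WeinsteinRevisited}, is that if $\Leg \subset M$ is a Legendrian disk with $\partial \Leg = \Leg \cap \divSet$ loose in $\MxiDivSet$, then $\Leg$ is loose in $M \setminus \divSet$. The proof is local: in a Weinstein neighborhood $\disk(\rho) \times N_{\divSet}$ where $\Leg$ appears as $(\partial \Leg) \times \{p=0,\ q>0\}$, the product of a loose chart for $\partial \Leg$ with a cotangent zero-section interval is again a loose chart, now disjoint from $\divSet$. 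One then builds an ambient critical-index handle whose stable boundary is a Legendrian unknot $\Leg_{U}^{n-1} \subset \divSet$; the surgered divisor $\divSet^{+}$ is a contact connected sum of $\divSet$ with $(\sphere^{\ast}\sphere^{n}, \xi_{std})$ and hence still overtwisted, so the unstable disk $\Unstable$ has loose boundary in $\divSet^{+}$ and the lemma gives looseness of $\Unstable$ in the complement of $\divSet^{+}$. This is both shorter and more elementary than your detour through overtwistedness of the complement, and it is exactly where the $\dim M \geq 7$ hypothesis enters (the loose-chart product argument needs $\dim \divSet \geq 5$).
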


We expect the result should extend to the $\dim M =5$ case as well.

\subsection{Outline of the article}

In \S \ref{Sec:Background} we cover background material required for our proofs. Apart from standard reference material we describe some specific models for Weinstein handles.

The model Weinstein handles are used in \S \ref{Sec:AmbientHandles} to define \emph{ambient Weinstein surgeries and anti-surgeries}. These are contact-topological versions of the usual handle-body modification of smooth $\codim=2$ submanifolds as in Rolfsen's \cite[\S 11.A]{Rolfsen}. For transverse links in contact $3$-manifolds, these handle attachments locally correspond to crossing changes in a front projection. See \S \ref{Sec:LowDimHandles}. Contact stabilization is then defined using ambient Weinstein handle attachments. After the definition is provided, basic examples and properties are covered. In \S \ref{Sec:AntiStabilization}, we interpret the construction of non-simple $\divSet$ from \cite{CasalsEtnyre} in the language of ambient Weinstein surgeries, describing an \emph{anti-stabilization} operation on general contact divisors in high dimensions.

In \S \ref{Sec:NormalAndFiberSum} we study normal bundles of contact divisors and review the contact fiber sum. This is prerequisite for \S \ref{Sec:FiberSum} which details a fiber sum cobordism construction combining Geiges' \cite{Geiges:Branched} and Gompf's \cite{Gompf:Sum}. In \S \ref{Sec:MainProofs} we use properties of our fiber sum cobordisms to prove the theorems stated in this introduction.

\subsection{A remark on our methodology}

In \cite{CasalsEtnyre}, contact divisors are distinguished by studying Weinstein cobordisms associated to their branched covers. We briefly explain why we use fiber sums instead of branched covers to prove Theorem \ref{Thm:MainOT}.

It can be shown that if $\unknot'$ is a stabilization of $\unknot \subset \stdSphere{2n+1}$, then a double branched cover of $\stdSphere{2n+1}$ over $\divSet'$ is overtwisted. This is enough to establish that $\unknot'$ is not contact isotopic to $\unknot$, since a double branched covering of the standard sphere over $\unknot$ is itself the standard sphere.\footnote{In most lectures supporting this work, we have described this branched covering strategy to establish the non-simplicity of $\unknot \subset \stdSphere{2n+1}$, since the proof is much shorter than that of Theorem \ref{Thm:MainOT}. The details will appear in a future article.}

Now suppose we seek to apply the same strategy to establish that $\unknot$ is not stabilized in a general, tight $\Mxi$ of $\dim \geq 5$. The previously mentioned techniques will show that a double branched covering of $\Mxi$ over a stabilized $\unknot'$ is overtwisted. So to distinguish $\unknot$ from $\unknot'$ we would like to show that a double branched cover of $\Mxi$ over $\unknot$ is tight. It follows from \cite[Theorem 1.20]{Avdek:Liouville} that this double branched cover is the contact-connected sum $\#2 \Mxi$ of $\Mxi$ with itself. Our proof would then be complete if it was established that $\# 2 \Mxi$ is tight. However, in contrast with the $\dim M=3$ case \cite{Colin:TightGluing, DG:TightDecomposition, Honda:Gluing}, it is not currently known if connected sums of tight contact manifolds of $\dim \geq 5$ are always tight.

\subsection{Acknowledgments}

\begin{wrapfigure}{r}{0.07\textwidth}
\centering
\includegraphics[width=.07\textwidth]{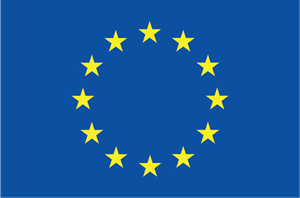}
\end{wrapfigure}

We thank Cofund MathInGreaterParis and the Fondation Mathématique Jacques Hadamard for supporting us as a member of the Laboratoire de Math\'{e}matiques d'Orsay at Universit\'{e} Paris-Saclay. This project has received funding from the European Union’s Horizon 2020 research and innovation programme under the Marie Skłodowska-Curie grant agreement No 101034255.

This project grew out of conversations with Ko Honda, for which we are very grateful. We're very thankful to Wenyuan Li for pointing out that Theorem \ref{Thm:OTDivisor} follows from Eliashberg's \cite{Eliash:WeinsteinRevisited}. We also thank Joseph Breen, Robert Cardona, Roger Casals, Austin Christian, John Etnyre, Fabio Gironella, Yang Huang, and Zhengyi Zhou for interesting discussions and feedback on earlier versions of this article. Updated versions of this article have benefited enormously from feedback at seminars and conferences held at the American Institute of Mathematics, Institut Henri Poincar\'{e}, Nantes Universit\'{e}, Université Grenoble Alpes, and the University of Iowa.

\section{Prerequisites}\label{Sec:Background}

This section covers background material required for our proofs. In addition to the material covered here, we will also use some basic properties of convex hypersurfaces as well as supporting open books and their stabilizations in \S \ref{Sec:MainProofs}. For a reference, see \cite{BHH:OB}. In \S \ref{Sec:HypersurfaceProof}, we also use some basic properties of contact homology -- namely the existence of cobordism maps and twisted coefficient systems, cf. \cite{Bourgeois:ContactIntro}.

\subsection{Legendrian stabilization}\label{Sec:LegStabilization}

We briefly review formal Legendrian isotopy \cite{EM:IntroHPrinciple, Murphy:Loose} and the Legendrian stabilization of \cite{EES:Nonisotopic}. Let $\Mxi$ be a $\dim=2n+1$ contact manifold and let $\Leg$ be a smooth $\dim=n$ manifold. A \emph{formal Legendrian embedding} is a $S \in [0, 1]$ family of commutative diagrams
\begin{equation*}
\begin{tikzcd}
T\Leg \arrow[r, "\Phi_{S}"]\arrow[d] & TM \arrow[d]\\
\Leg \arrow[r, "\phi"] & M
\end{tikzcd}
\end{equation*}
with the vertical arrows being the canonical projections and the $\Phi_{S}$ depending continuously on $S$. The following properties are required to be satisfied:
\be
\item $\phi$ is a smooth embedding with $\Phi_{0} = T\phi$.
\item $\Phi_{S}$ is a smooth fiberwise-injective bundle map covering $\phi$ for each $S$.
\item $\Phi_{1}(T_{q}\Lambda)$ is a Lagrangian subspace of $\xi_{\phi(q)}$ for each $q\in \Lambda$.
\ee
A \emph{formal Legendrian isotopy} is a $T\in [0, 1]$ family $(\Phi_{S, T}, \phi_{T})$ of formal Legendrian embeddings.

Now we define Legendrian stabilization. The standard contact structure $(\R^{2n+1}, \xi_{std})$ is
\begin{equation*}
\R^{2n+1} = \R_{t} \times \R^{n}_{p}\times \R^{n}_{q}, \quad \xi_{std} = \ker\left(dt + \sum p_{i}dq_{i}\right).
\end{equation*}
It is the $1$-jet space of $\R^{n}_{q}$. We'll assume $n\geq 2$. Let $f(\rho): \R \rightarrow \R$ be an increasing function such that $\exists \epsilon > 0$ for which $f(\rho) = -1$ for $\rho < -\epsilon$ and $f(\rho) = 1$ for $\rho > \epsilon$. Consider the Legendrian embedding
\begin{equation*}
\R^{n}_{q} \rightarrow \R^{2n+1}, \quad (q_{1}, \cdots, q_{n}) \mapsto \left(f(q_{1}^{3}), \left(-\frac{3}{2}q_{1}\frac{\partial f}{\partial \rho}\left(q_{1}^{3}\right), 0, \dots, 0\right), (q_{1}^{2}, q_{2}, \dots, q_{n})\right)
\end{equation*}
whose image will be denoted $\Lambda_{C}$. The front projection ($\to \R^{n+1}_{t,q}$) of $\Lambda_{C}$ is singular along the set $\{ t = q_{1} = 0\} \subset \Lambda_{C}$ which we call the \emph{cusp}. Observe that along the set $q_{1} > \epsilon^{2}$, that the Lagrangian projection ($\to \R^{2n}_{p,q}$) for $\Lambda_{C}$ is a two-sheeted covering of its image, with the sheets given by the $1$-jets of the constant functions $t=\pm 1$.

Let $\gamma \subset \{ q_{1} > \epsilon^{2}\} \subset \R^{n}_{q}$ be an embedded circle with a tubular neighborhood $N_{\gamma} = \gamma \times \disk^{n-1}_{x}$. We view $N_{\gamma}$ as being contained in the lower sheet $\{ t=-1, q_{1} > \epsilon^{2}\}$ of $\Leg_{C}$. Take a bump function $g(\rho)$ for which $g(0) = 1+\delta$ and $g(\rho) = -1$ for $\rho > \delta$, with $\delta$ a small positive constant. Then set $g(q) = |x|^{2}$ inside of $N_{\gamma}$ and $g = -1$ elsewhere. Define $\Lambda_{\gamma}$ to be Legendrian given by replacing the lower sheet $\{ t=-1, q_{1} > \epsilon^{2}\}$ with the $1$-jet lift $(g(q), -dg, q)$ of the function $g$. For any such $\gamma, g$ we say that $\Lambda_{\gamma}$ is a \emph{stabilization} of $\Lambda_{C}$. See Figure \ref{Fig:StabilizationFront} which mimics \cite[Figure 2.1]{EM:Caps}.

\begin{figure}[h]
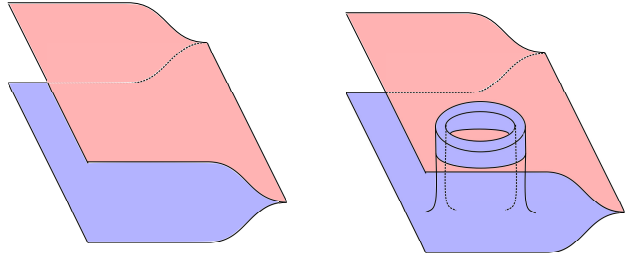

\begin{overpic}[scale=.5]{stabilization_front.eps}
\end{overpic}
\caption{On the left, a cusp neighborhood in the $n=2$ front projection with the upper sheet shaded red and the lower sheet shaded blue. On the right, a stabilization is performed along a circle in the lower sheet.}
\label{Fig:StabilizationFront}
\end{figure}

Now we apply the preceding constructions to Legendrians in arbitrary contact manifolds. Provided a Legendrian $\Leg$ in some $\Mxi$ and a disk $\disk^{2n+1} \subset \Mxi$ such that the pair $(\disk^{2n+1}, \disk^{2n+1} \cap \Lambda)$ is isomorphic to a disk in $(\R^{2n+1}, \Leg)$ taking $0 \in \disk^{2n+1}$ to a point on the cusp, we say that $(\disk^{2n+1}, \Leg)$ is a \emph{cusp chart} for $\Leg$. Since contact disks centered about points in Legendrians are all locally isomorphic, every $q \in \Leg$ is centered about some cusp chart.

\begin{defn}
A Legendrian $\Leg \subset \Mxi$ is a \emph{stabilization of a Legendrian $\Leg' \subset \Mxi$} \cite{EES:Nonisotopic} if $\Leg'$ has a cusp chart $(\disk^{2n+1}, \Leg')$ for which
\be
\item $\Leg$ and $\Leg'$ coincide outside of $\disk^{2n+1}$ and
\item within $\disk^{2n+1}$, $\Leg$ is obtained from stabilizing $\Leg'$.
\ee
A Legendrian $\Leg \subset \Mxi$ is \emph{loose} \cite{Murphy:Loose} if it is a stabilization of some $\Leg'$.
\end{defn}

\begin{lemma}[\cite{EES:Nonisotopic}]
If $\Leg$ is a stabilization of $\Leg'$, then $\Leg$ and $\Leg'$ are formally Legendrian isotopic.
\end{lemma}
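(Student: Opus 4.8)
The plan is to realize the stabilization by a \emph{Legendrian regular homotopy} --- a smooth family of Legendrian immersions --- and to observe that such a homotopy connecting two Legendrian embeddings automatically produces a formal Legendrian isotopy between them; the point is that the only obstruction to an honest Legendrian isotopy is a codimension-one crossing of sheets, which is invisible to the formal data. First I would localize: since $\Leg$ and $\Leg'$ agree outside the cusp chart $\disk^{2n+1}$ and a formal isotopy may be taken relative to the complement of the region where they differ, it suffices to build a formal Legendrian isotopy, fixed near $\partial N_\gamma$, between the local models $\Lambda_C$ and $\Lambda_\gamma$ in $(\R^{2n+1}, \xi_{std})$.

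Next I would interpolate the generating function of the lower sheet. The models differ only in that sheet over $N_\gamma$: for $\Lambda_C$ it is the $1$-jet of the constant $-1$, and for $\Lambda_\gamma$ the $1$-jet of $g$. Setting $g_T = (1-T)(-1) + T g$ gives a smooth family of functions equal to $-1$ near $\partial N_\gamma$, hence a smooth family of Legendrian immersions $\Lambda_T$ (with the upper sheet and cusp fixed and lower sheet $j^1 g_T$) running from $\Lambda_0 = \Lambda_C$ to $\Lambda_1 = \Lambda_\gamma$. Each $\Lambda_T$ is genuinely Legendrian, so its tangent planes are Lagrangian in $\xi_{std}$ for every $T$; this is the tangential data I will ultimately use. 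Crucially, $\Lambda_T$ fails to be embedded exactly at the isolated parameter where the rising lower sheet crosses the fixed upper sheet --- precisely the phenomenon that obstructs a genuine Legendrian isotopy and makes stabilization nontrivial.

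Then I would upgrade to embeddings and assemble the formal data. By general position the self-intersections of $\{\Lambda_T\}$ are isolated transverse crossings in the one-parameter family (two $n$-sheets in $\R^{2n+1}$, where $2n < 2n+1$, so a crossing is codimension one); each is removed by a compactly supported, $C^1$-small push of the lower sheet in a direction transverse to the span of the two touching tangent planes, which exists because those planes span only $2n$ dimensions. Taking this perturbation supported away from $T = 0, 1$ gives a smooth isotopy $\phi_T$ of embeddings with endpoints parametrizing $\Lambda_C$ and $\Lambda_\gamma$. For the formal lift I would set $\Phi_{0,T} = T\phi_T$, let $\Phi_{1,T}$ be the Lagrangian tangent plane of the unperturbed $\Lambda_T$ carried to the basepoint $\phi_T(q)$ by the symplectic projection onto $\xi_{\phi_T(q)}$, and take $\Phi_{S,T}$ to be the straight-line homotopy between them. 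The $C^1$-smallness keeps $\Phi_{S,T}$ fiberwise injective, and since the perturbation vanishes near $T = 0, 1$ the family reduces there to the constant formal structures of the genuine Legendrians $\Lambda_C$ and $\Lambda_\gamma$, yielding the required formal Legendrian isotopy $(\Phi_{S,T}, \phi_T)$.

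I expect the main obstacle to be the compatibility between the last two steps: arranging the resolution of the sheet-crossing to be at once $C^1$-small and trivial near the endpoints, so that $\Phi_{S,T}$ stays injective while collapsing to the honest Lagrangian data at $T = 0, 1$. What has to be confirmed is that the obstruction to a genuine Legendrian isotopy is exactly the smooth sheet-crossing, which dissolves the moment the Lagrangian constraint on tangent planes is relaxed to the formal one.
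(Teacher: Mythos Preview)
The paper does not provide its own proof of this lemma; it is stated with a citation to \cite{EES:Nonisotopic} and no argument is given. Your approach via Legendrian regular homotopy is the standard one and is correct in outline: interpolating the generating function of the lower sheet yields a one-parameter family of genuine Legendrian immersions whose tangent planes are automatically Lagrangian, and the isolated sheet crossings that obstruct embedding are invisible to the formal data once removed by a $C^{1}$-small smooth perturbation.

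Two points to tighten. First, for the rotationally symmetric bump $g$ in the paper's model the sheet crossing in your straight-line interpolation is not generic: both $g_{T}=1$ and $dg_{T}=0$ hold along the entire circle $\gamma\times\{0\}$ at the single parameter $T=2/(2+\delta)$, so your general-position claim genuinely requires a preliminary perturbation of $g$ or of the interpolation path before you can speak of isolated transverse crossings. Second, the phrase ``symplectic projection onto $\xi_{\phi_{T}(q)}$'' is doing real work and should be unpacked: the Lagrangian subspace $T\psi_{T}(T_{q}\Lambda)$ lives in $\xi_{\psi_{T}(q)}$, not $\xi_{\phi_{T}(q)}$, and since $\xi_{std}$ is not translation-invariant you need a local symplectic trivialization of $\xi$ (available in the Darboux chart) to transport it, after which a further small homotopy inside the Lagrangian Grassmannian may be needed to land exactly in $\xi_{\phi_{T}(q)}$ as a Lagrangian. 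Both are routine but should be said.
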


\subsection{Formal contact structures and isotopies}

\begin{defn}
Let $M$ be a $\dim=2n+1$ manifold equipped with a hyperplane sub-bundle $\eta \subset TM$ and a $2$-form $\omega$. We say that $(\eta, \omega)$ is a \emph{formal contact structure on $M$} if $\omega$ is fiber-wise symplectic on $\eta$.
\end{defn}

Of course a contact form $\alpha$ for a contact manifold $\Mxi$ determines a formal contact structure $(\xi, d\alpha)$ on $M$ which, modulo deformation, is independent of $\alpha$. Now we address submanifolds and homotopies.

\begin{defn}\label{Def:FormalContact}
Let $(\eta_{M}, \omega_{M})$ be a formal contact structure on a $(2n+1)$-dimensional manifold $M$ and let $(\eta_{\Gamma}, \omega_{\Gamma})$ be a formal contact structure on a $(2n-1)$-dimensional manifold $\Gamma$. A \emph{formal contact embedding} of $(\Gamma, \eta_{\Gamma}, \omega_{\Gamma})$ into $(M, \eta_{M}, \omega_{M})$ is a $S \in [0, 1]$ continuous family of commutative diagrams
\begin{equation*}
\begin{tikzcd}
T\Gamma \arrow[r, "\Phi_{S}"]\arrow[d] & TM \arrow[d]\\
\Gamma \arrow[r, "\phi"] & M
\end{tikzcd}
\end{equation*}
which satisfy the following properties:
\be
\item $\phi$ is an embedding with tangent map $T\phi = \Phi_{0}$,
\item $\Phi_{S}$ is a fiber-wise injective bundle map for all $S$, and
\item $\Phi_{1}$ is a conformally symplectic map from $(\eta_{\Gamma}, \omega_{\Gamma})$ to $(\eta_{M}, \omega_{M})$.
\ee
A pair $(\phi_{T}, \Phi_{S, T}), T=0, 1$ of formal contact embeddings are \emph{formally contact isotopic} if they are connected by a $T\in [0, 1]$ family of formal contact embeddings. The image of a formal contact embedding is a \emph{formal contact submanifold}.
\end{defn}

\subsection{Some symplectic manifolds}\label{Sec:SympManifolds}

An \emph{exact symplectic manifold} is a pair $(W, \beta_{W})$ consisting of a manifold $W$ (possibly with corners) and a \emph{Liouville form} $\betaW \in \Omega^{1}(W)$, meaning that $d\betaW$ is symplectic. Its \emph{Liouville vector field}, $X_{\betaW}$, is defined by
\begin{equation*}
d\betaW(X_{\betaW}, \ast) = \betaW \iff \Lie_{X_{\betaW}}\betaW = \betaW.
\end{equation*}
A \emph{Liouville cobordism} is an exact symplectic manifold with smooth boundary such that $X_{\betaW}$ is transverse to $\partial W$. Write $\partial^{+}W$ ($\partial^{-}W$) for the portion of $\partial W$ along which $X_{\betaW}$ points out of (respectively, into) $W$. Then $\partial^{+}W$ is the \emph{convex boundary} and $\partial^{-}W$ is the \emph{concave boundary}. Both $\partial^{\pm}W$ are contact manifolds when equipped with the contact forms $\betaW|_{T\partial^{\pm}W}$. When $\partial^{-}W=\emptyset$, we say that $(W, \betaW)$ is a \emph{Liouville filling} of its convex boundary.

We briefly mention strong and weak symplectic cobordisms which we won't need to address until \S \ref{Sec:HypersurfaceProof}. See \cite{MNW13} for further details. A \emph{weak symplectic cobordism} $(W, \omega)$ is a symplectic manifold with boundary $\partial W = \partial^{-}W \sqcup \partial^{+}W$ such that each $\partial^{\pm}W$ has a collar neighborhood $[0, 1]_{s} \times \partial^{\pm}W \subset W$ along which $\omega = e^{s}\alpha_{\pm} + \omega_{\pm}$ where $\alpha_{\pm}$ is a contact form on $\partial^{\pm}W$ and $\omega_{\pm} \in \Omega^{2}(\partial^{\pm}W)$ is closed. When $\omega_{\pm}=0$, we say that $(W, \omega)$ is a \emph{strong symplectic cobordism}, or simply, a symplectic cobordism. When $\partial^{-}W=\emptyset$, we say that $(W, \omega)$ is a \emph{weak symplectic filling}. A \emph{strong symplectic filling} is a weak symplectic filling with $\omega_{+}=0$.

\begin{rmk}
Our definition of weak cobordisms is implicitly using \cite[Lemma 1.10]{MNW13}, which provides a Weinstein neighborhood type theorem for weak cobordisms as defined in \cite[Definition 4]{MNW13}.
\end{rmk}

A \emph{Weinstein cobordism} is a triple $(W, \betaW, F_{W})$ for which $(W, \betaW)$ is a Liouville cobordism and $F_{W}: W \rightarrow \R$ is a smooth function such that
\be
\item the zeros of $\betaW$ agree with the critical points of $F_{W}$ and 
\item $df(X_{\betaW}) > 0$ away from critical points.
\ee
This implies that $dF_{W}$ is non-zero along $\partial W$. We say that $F_{W}$ is a \emph{Lyapunov function} for $\betaW$. If $F_{W}$ is Morse and all its critical points have Morse index $< \half \dim W$, we say that $(W, \betaW, F_{W})$ is \emph{subcritical}. The canonical modern reference for Weinstein manifolds is Cieliebak and Eliashberg's \cite{SteinToWeinstein}. A Liouville cobordism $(W, \betaW)$ is \emph{Weinstein} if there exists an $F_{W}$ for which $(W, \betaW, F_{W})$ is Weinstein.

\begin{assump}
The $F_{W}$ on Weinstein manifolds are assumed Morse unless otherwise indicated.
\end{assump}

Critical point sets will be denoted $\Crit(F_{W}) \subset W$. Provided a $(W, \betaW, F_{W})$, write
\begin{equation*}
W_{s} = \{ F_{W} = s \}, \quad W_{\leq s} = \{ F_{W} \leq s\}, \quad W_{\geq s} = \{ F_{W} \geq s \},
\end{equation*}
so that when $s$ is a regular value of $F_{W}$, $W_{s}$ is a contact manifold and the $W_{\leq s}, W_{\geq s}$ are Weinstein cobordisms.

The following results concerning deformations of Weinstein structures is a restatement of \cite[Lemma 12.1]{SteinToWeinstein}. See \cite{Lazarev:Simple} for recent results on deformations of Weinstein structures.

\begin{lemma}\label{Lemma:BetaFunctionMultiple}
Provided a Liouville cobordism $(W, \betaW)$ and $f \in \Cinfty(W)$, $e^{f}\betaW$ is a Liouville form on $W$ iff $1 + df(X_{\betaW}) > 0$ in which case the new Liouville field is computed
\begin{equation*}
X_{e^{f}\betaW} = (1 + df(X_{\betaW}))^{-1}X_{\betaW}.
\end{equation*}
If in addition $F_{W}$ is a Lyapunov function giving the cobordism a Weinstein structure, and $e^{f}\betaW$ is Liouville, then $(W, e^{f}\betaW, F_{W})$ is Weinstein.
\end{lemma}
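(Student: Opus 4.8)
The plan is to reduce all three assertions to Cartan-calculus identities built from the defining relation $\iota_{X_{\betaW}}d\betaW = \betaW$, after which everything follows from a single computation of the top exterior power of $d(e^{f}\betaW)$. Throughout I abbreviate $X = X_{\betaW}$, $h = df(X)$, and $n = \half \dim W$. Two elementary consequences of the definition drive the argument and I would record them first: since $\betaW = \iota_{X}d\betaW$ and $\iota_{X}\iota_{X} = 0$, we have $\betaW(X) = 0$; and for any top-degree form $\mu$ on $W$ the relation $\iota_{X}(df\wedge\mu) = 0$ (the argument is a top form) rearranges to $df\wedge\iota_{X}\mu = (df(X))\,\mu$.

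For the ``iff'' I would compute the top power of $d(e^{f}\betaW) = e^{f}(d\betaW + df\wedge\betaW)$. Because $(df\wedge\betaW)^{2}=0$ and even-degree forms commute, the binomial expansion collapses to
\begin{equation*}
(d\betaW + df\wedge\betaW)^{n} = (d\betaW)^{n} + n\,(d\betaW)^{n-1}\wedge df \wedge \betaW .
\end{equation*}
Using $\iota_{X}(d\betaW)^{n} = n\,\betaW\wedge(d\betaW)^{n-1}$ and the top-form identity above, the second summand becomes $df\wedge\iota_{X}(d\betaW)^{n} = h\,(d\betaW)^{n}$, so that
\begin{equation*}
\bigl(d(e^{f}\betaW)\bigr)^{n} = e^{nf}(1 + h)\,(d\betaW)^{n}.
\end{equation*}
Since $(d\betaW)^{n}$ is a nowhere-vanishing volume form and $e^{nf}>0$, the $2$-form $d(e^{f}\betaW)$ is nondegenerate exactly when $1+h\neq 0$, and it induces the same orientation as the ambient symplectic form $d\betaW$ precisely when $1+h>0$; this orientation compatibility is the operative content of $e^{f}\betaW$ being a Liouville form, giving the stated equivalence. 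The field formula is then a direct check: with $X' = (1+h)^{-1}X$ one computes $\iota_{X}(df\wedge\betaW) = h\,\betaW - \betaW(X)\,df = h\,\betaW$, whence $\iota_{X}(d\betaW + df\wedge\betaW) = (1+h)\,\betaW$ and therefore $\iota_{X'}d(e^{f}\betaW) = e^{f}\betaW$, as claimed.

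For the Weinstein statement I would verify the two defining axioms for $(W, e^{f}\betaW, F_{W})$ from the formula just obtained. The zero set of $e^{f}\betaW$ equals that of $\betaW$ because $e^{f}>0$, hence equals $\Crit(F_{W})$ by hypothesis; and away from critical points $dF_{W}(X_{e^{f}\betaW}) = (1+h)^{-1}\,dF_{W}(X)$ is a \emph{positive} multiple of the positive quantity $dF_{W}(X)>0$, since the Liouville hypothesis forces $1+h>0$. Because $X_{e^{f}\betaW}$ is a positive rescaling of $X_{\betaW}$, its transversality to $\partial W$ and the resulting $\partial^{\pm}W$ decomposition are unchanged, so $(W,e^{f}\betaW)$ remains a Liouville cobordism with $F_{W}$ as Lyapunov function.

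The main obstacle is the bookkeeping in the top-power computation—keeping the Cartan-calculus signs straight—and, more conceptually, recognizing that nondegeneracy alone yields only $1+h\neq 0$, so that the strict inequality $1+h>0$ must be extracted from the \emph{sign} of $\bigl(d(e^{f}\betaW)\bigr)^{n}$ relative to $(d\betaW)^{n}$, i.e.\ from the requirement that the new symplectic form be compatible with the orientation fixed by the cobordism. Once that volume identity is established, the Liouville-field formula and the Weinstein conclusion are immediate algebraic consequences.
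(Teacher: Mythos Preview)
Your proof is correct. The paper does not supply its own argument for this lemma—it simply records the statement as a restatement of \cite[Lemma 12.1]{SteinToWeinstein}—so there is no in-paper proof to compare against. Your direct Cartan-calculus derivation of $\bigl(d(e^{f}\betaW)\bigr)^{n} = e^{nf}(1+h)(d\betaW)^{n}$ and the contraction identity $\iota_{X_{\betaW}}(d\betaW+df\wedge\betaW)=(1+h)\betaW$ are exactly the computations underlying that reference, and the Weinstein verification follows immediately as you say. Your care in separating $1+h\neq 0$ (bare nondegeneracy) from $1+h>0$ (orientation compatibility with the given cobordism, equivalently preservation of the $\partial^{\pm}W$ decomposition) is appropriate and correctly resolved.
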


\subsection{(Un)stable manifolds and contact (anti)surgery}\label{Sec:WeinsteinSurgery}

Let $\zeta$ be a zero of a vector field $X$ defined on a manifold. We recall that stable and unstable manifolds associated to the pair $(\zeta, X)$ are defined
\begin{equation*}
\begin{aligned}
\Stable(\zeta, X) = \left\{ z\ :\ \lim_{T\rightarrow \infty}\Flow^{T}_{X}(z) = \zeta\right\},\quad
\Unstable(\zeta, X) = \left\{ z\ :\ \lim_{T\rightarrow -\infty}\Flow^{T}_{X}(z) = \zeta\right\}.
\end{aligned}
\end{equation*}
For the following results, see \cite[\S 11.3-11.4]{SteinToWeinstein}.

\begin{lemma}\label{Lemma:GeneralBetaVanishing}
Let $(W, \betaW)$ be a Liouville manifold with $\zeta$ a non-degenerate zero of $\betaW$. Then $\betaW$ is zero along $T\Stable(\zeta, X_{\betaW})$ implying that $\Stable(\zeta, X_{\betaW})$ is isotropic in $W$. If $\dim T_{\zeta}\Unstable(\zeta, X_{\betaW}) = \half \dim W$, then $\betaW$ is also zero along $T\Unstable(\zeta, X_{\betaW})$, implying that $\Unstable(\zeta, X_{\betaW})$ is Lagrangian.

Moreover if $F_{W}$ is Lyapunov for $(W, \betaW)$, then for each regular value $s$ of $F_{W}$ and non-degenerate zero $\zeta \in W$ of $\betaW$, the spheres
\begin{equation*}
\Leg^{\Stable}_{s, \zeta} = \Stable(\zeta, X_{\betaW}) \cap W_{s}
\end{equation*} are isotropic in the contact manifolds $W_{s}$. When the Morse index is $\ind_{\Morse}(\zeta, \Crit(F_{W})) = \half \dim W$, then both $\Lambda_{s, \zeta}^{\Stable}$ and
\begin{equation*}
\Lambda_{s, \zeta}^{\Unstable} = \Unstable(\zeta, X_{\betaW}) \cap W_{s}
\end{equation*}
are Legendrian for each regular value $s$.
\end{lemma}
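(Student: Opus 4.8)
\textit{Proof proposal.} The plan is to exploit the single structural fact that the Liouville flow rescales $\betaW$: from $\Lie_{X_{\betaW}}\betaW = \betaW$ and Cartan's formula one gets $(\Flow^{T}_{X_{\betaW}})^{\ast}\betaW = e^{T}\betaW$, and likewise $(\Flow^{T}_{X_{\betaW}})^{\ast}\omega = e^{T}\omega$ for $\omega = d\betaW$. First I would prove that $\Stable(\zeta, X_{\betaW})$ is isotropic. Fix $z\in \Stable(\zeta, X_{\betaW})$ and $v\in T_{z}\Stable(\zeta, X_{\betaW})$. The rescaling relation gives, for every $T$, $\betaW(v) = e^{-T}\betaW\big((\Flow^{T}_{X_{\betaW}})_{\ast}v\big)$, the right-hand side being evaluated at $\Flow^{T}_{X_{\betaW}}(z)$. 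Letting $T\to +\infty$, the base point $\Flow^{T}_{X_{\betaW}}(z)\to \zeta$ so $\betaW\to \betaW(\zeta)=0$, while $(\Flow^{T}_{X_{\betaW}})_{\ast}v$ stays bounded because the flow contracts vectors tangent to the stable manifold; since the left-hand side is independent of $T$, we conclude $\betaW(v)=0$. Thus the inclusion $\iota\colon \Stable(\zeta, X_{\betaW})\hookrightarrow W$ satisfies $\iota^{\ast}\betaW=0$, whence $\iota^{\ast}\omega = d\,\iota^{\ast}\betaW = 0$ and $\Stable(\zeta, X_{\betaW})$ is isotropic.

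The unstable case is the main obstacle: repeating the computation with $v\in T_{z}\Unstable(\zeta, X_{\betaW})$ and $T\to -\infty$ again sends the base point to $\zeta$ and contracts $(\Flow^{T}_{X_{\betaW}})_{\ast}v$, but now the prefactor $e^{-T}\to +\infty$, so one needs quantitative decay rates. I would obtain these from the linearization $A = D X_{\betaW}|_{\zeta}$. Linearizing $\Lie_{X_{\betaW}}\omega = \omega$ at $\zeta$ yields $\omega(Au, v) + \omega(u, Av) = \omega(u, v)$, which forces the eigenvalues of $A$ to occur in pairs $\lambda, 1-\lambda$, their real parts summing to $1$. Under the hypothesis $\dim T_{\zeta}\Unstable(\zeta, X_{\betaW}) = \half\dim W$, a counting argument then shows every eigenvalue with positive real part in fact has real part $\geq 1$: since the real parts in each pair sum to $1$, no pair can have both real parts non-positive, so each of the $\half\dim W$ pairs contributes at least one unstable direction; as the total number of unstable directions is exactly $\half\dim W$, each pair contributes precisely one, and that unstable eigenvalue is paired with one of non-positive real part, forcing its own real part to be $\geq 1$. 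Consequently, as $T\to -\infty$, both $\betaW$ at $\Flow^{T}_{X_{\betaW}}(z)$ and the vector $(\Flow^{T}_{X_{\betaW}})_{\ast}v$ decay at rate $\geq 1$, overwhelming $e^{-T}$, so $\betaW(v)=0$. As before $\iota^{\ast}\omega = 0$, and since $\Unstable(\zeta, X_{\betaW})$ is half-dimensional it is Lagrangian.

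For the level-set statements, fix a regular value $s$. Since $dF_{W}(X_{\betaW})>0$, the field $X_{\betaW}$ is transverse to $W_{s}$; as $\Stable(\zeta, X_{\betaW})$ is flow-invariant, $X_{\betaW}$ is tangent to it, so $\Stable(\zeta, X_{\betaW})$ meets $W_{s}$ transversally in $W$ and $\Leg^{\Stable}_{s, \zeta}$ is a sphere of dimension $\dim\Stable(\zeta, X_{\betaW}) - 1$. Its tangent spaces lie in $T\Stable(\zeta, X_{\betaW})$, on which $\betaW=0$; hence they lie in the contact hyperplane $\ker(\betaW|_{TW_{s}})$ and $\omega$ vanishes on them, so $\Leg^{\Stable}_{s, \zeta}$ is isotropic in the contact manifold $W_{s}$. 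Finally, when $\ind_{\Morse}(\zeta, \Crit(F_{W})) = \half\dim W = n$, both $\Stable(\zeta, X_{\betaW})$ and $\Unstable(\zeta, X_{\betaW})$ have dimension $n$, so $\Leg^{\Stable}_{s,\zeta}$ and $\Lambda^{\Unstable}_{s,\zeta}$ have dimension $n-1$; this is the maximal dimension of an isotropic submanifold in the $(2n-1)$-dimensional contact manifold $W_{s}$, so both are Legendrian. The same argument applies to $\Unstable(\zeta, X_{\betaW})$ using the superlevel side, completing the proof.
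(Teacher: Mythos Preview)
The paper does not prove this lemma itself; it simply cites \cite[\S 11.3--11.4]{SteinToWeinstein}. Your argument is correct and is essentially the standard one recorded there. The stable case via the rescaling identity $(\Flow^{T}_{X_{\betaW}})^{\ast}\betaW = e^{T}\betaW$ is exactly the classical argument. For the unstable case you correctly identify the genuine obstruction (the prefactor $e^{-T}$ blows up as $T\to-\infty$) and resolve it with the eigenvalue pairing $\lambda \leftrightarrow 1-\lambda$ coming from $\omega(Au,v)+\omega(u,Av)=\omega(u,v)$: the counting argument forcing each unstable eigenvalue to have real part $\geq 1$ when $\dim E^{u}=\tfrac12\dim W$ is the key point, and then the product of the linear vanishing of $\betaW$ near $\zeta$ with the contraction of $(\Flow^{T})_{\ast}v$ (each at exponential rate $\geq 1$) beats $e^{-T}$.

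One detail worth tightening: the nonlinear backward flow along $\Unstable$ achieves the linearized decay rate only up to an arbitrarily small loss, so strictly you get $|\Flow^{T}(z)-\zeta|,\ |(\Flow^{T})_{\ast}v| \leq C e^{(1-\epsilon)T}$ as $T\to -\infty$ for any $\epsilon>0$ (with $C$ depending on $\epsilon$). Your conclusion survives because $2(1-\epsilon)-1>0$ for $\epsilon<\tfrac12$. The level-set statements then follow exactly as you say: $X_{\betaW}$ is tangent to the (un)stable manifold and transverse to $W_{s}$, so the intersections are smooth spheres of the asserted dimension, and $\betaW$-isotropy in $W$ gives isotropy in the contact hyperplane $\ker(\betaW|_{TW_{s}})$.
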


When $\zeta \in \Crit(F_{W})$ with $s = F_{W}(\zeta)$ then for $\epsilon > 0$ small, $(W_{\leq s + \epsilon}, \betaW, F_{W})$ can be seen as the result of a \emph{Weinstein surgery} \cite{Weinstein:Handles} determined by attaching a Weinstein handle along the isotropic sphere $\Stable(\zeta, X_{\betaW}) \cap W_{s - \epsilon}$ in the convex boundary of $W_{\leq s - \epsilon}$. Specific models for Weinstein handles will be provided in the next subsection.

If for each $\ind_{\Morse}(\zeta, F_{W}) = \half \dim W$ critical point, the Legendrian sphere $\Leg^{\Stable}_{F_{W}(\zeta)-\epsilon, \zeta}$ is loose, we say that the cobordism is \emph{flexible}. Flexible cobordisms include subcritical cobordisms as a proper subset.

When $\ind_{\Morse}(\zeta, F_{W}) = \half \dim W$ we also say that $(W_{\geq s - \epsilon}, \betaW, F_{W})$ is obtained by \emph{Weinstein anti-surgery} along the Legendrian sphere $\Lambda_{s, \zeta}^{\Unstable}$ in the concave boundary of $(W_{\geq s+ \epsilon}, \betaW, F_{W})$. So $(W_{\geq s - \epsilon}, \betaW, F_{W})$ is obtained from $(W_{\geq s - \epsilon}, \betaW, F_{W})$ by removing a neighborhood of a $\dim=n$ unstable manifold, which is a Lagrangian slice disk for its boundary. See \cite{CET:PlusOneFilling, DLMW:Antisurgery} for recent results on contact anti-surgery which employ the ``cutting out a Lagrangian disk'' point of view. The ``anti'' versions of subcritical surgeries are \emph{coisotropic surgeries} described in \cite{CE:Caps}.

In the critical $\ind_{\Morse}(\zeta, F_{W})=\half \dim W$ case we say that
\be
\item the contact manifold $W_{s + \epsilon}$ is the result of a \emph{contact surgery} along $\Leg^{\Stable}_{s-\epsilon, \zeta}$ and
\item the contact manifolds $W_{s - \epsilon}$ is the result of \emph{contact anti-surgery} along $\Leg^{\Unstable}_{s + \epsilon, \zeta}$.
\ee
Contact surgeries and anti-surgeries can be defined without reference to Weinstein structures, using convex gluing and Dehn twists \cite{Avdek:Liouville, DG:Surgery}. These are also respectively known as contact $-1$ surgeries and contact $+1$ surgeries due to the fact that in the $\dim W = 4$ case, they are smoothly Dehn surgeries along Legendrian knots whose surgery coefficients are $\pm 1$ when computed with respect to the contact framing. We recommend \cite{OS:SurgeryBook} for an introduction to the low dimensional case. Contact surgeries along Legendrian are often called ``Legendrian surgeries'' in the literature and we've heard the terms ``upside-down Legendrian surgeries'' and ``Legendrian anti-surgeries'' used for what we are calling contact anti-surgeries.

\begin{rmk}
Contact surgery as defined in \cite{Avdek:Liouville} (which generalizes \cite{DG:Surgery} to high dimension) requires framing data as input: The Legendrian sphere along which contact surgery is performed must be identified with the boundary of a disk, and there is in general more than one such identification in higher dimensions by the existence of exotic smooth homotopy spheres given by gluing high-dimensional disks along their boundaries \cite{KM:ExoticSpheres}. We disregard such framing data as all of our contact surgeries will be the result of Weinstein surgeries (anti-surgeries) whose stable (resp. unstable) disks provide the necessary framing.
\end{rmk}

We'll need the following results concerning Weinstein surgeries and overtwistedness, the first being \cite[Proposition 2.12]{CMP:OT}.

\begin{lemma}\label{Lemma:SubcriticalPreservesOT}
If $(W, \betaW, F_{W})$ is a flexible Weinstein cobordism whose concave boundary is overtwisted, then its convex boundary is overtwisted as well.
\end{lemma}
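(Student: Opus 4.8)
The plan is to localize the statement to a single handle attachment and then treat the subcritical and critical handles separately, the unifying principle being that overtwistedness is certified by a fixed local model which one tries to keep disjoint from the surgery region.

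\textbf{Reduction to a single handle.} First I would order the critical values $s_{1} < \dots < s_{k}$ of $F_{W}$ and interleave regular values. Between two regular levels with no critical value in between, the Liouville flow $\Flow^{T}_{X_{\betaW}}$ restricts to a diffeomorphism $W_{s} \to W_{s'}$ pulling $\betaW|_{TW_{s'}}$ back to a positive multiple of $\betaW|_{TW_{s}}$, hence a contactomorphism; since overtwistedness is a contactomorphism invariant it is unchanged across such intervals. Thus it suffices to show that crossing one critical point $\zeta$ preserves overtwistedness, i.e. that $W_{s_{i}-\epsilon}$ overtwisted implies $W_{s_{i}+\epsilon}$ overtwisted. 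Using the standard Weinstein handle models, the passage $W_{s_{i}-\epsilon}\rightsquigarrow W_{s_{i}+\epsilon}$ is a contact surgery when $\ind_{\Morse}(\zeta,F_{W})=\half\dim W$ and a subcritical surgery otherwise, performed along the attaching sphere $\Leg^{\Stable}_{s_{i}-\epsilon,\zeta}$, and it alters the contact manifold only inside an arbitrarily small neighborhood $N$ of that sphere; the complement $W_{s_{i}-\epsilon}\setminus N$ embeds contactomorphically into $W_{s_{i}+\epsilon}$.

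\textbf{Certifying and transporting the overtwisted model.} By Theorem \ref{Thm:CMP}, overtwistedness of $W_{s_{i}-\epsilon}$ is equivalent to the presence of a fixed compact overtwisted ball $B_{OT}$ (equivalently, a loose standard Legendrian unknot $\Leg_{U}$ together with its stabilizing chart). The aim is to place this certificate inside $W_{s_{i}-\epsilon}\setminus N$, for then it survives into $W_{s_{i}+\epsilon}$ via the embedding of the previous step and certifies overtwistedness there. In the subcritical case the attaching sphere $\Leg^{\Stable}_{s_{i}-\epsilon,\zeta}$ is isotropic of dimension strictly below $\half\dim W - 1$, so $N$ may be taken to be a thin neighborhood of it; that overtwistedness is preserved under subcritical surgery is the established base case, which I would invoke from \cite{BEM:Overtwisted}. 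The genuine content of the lemma therefore lies in the critical index case.

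\textbf{The critical case and the main obstacle.} Here $\Leg^{\Stable}_{s_{i}-\epsilon,\zeta}$ is Legendrian, and flexibility is precisely the hypothesis that it is loose. Although a generic contact isotopy can make an $n$-dimensional Legendrian disjoint from any fixed submanifold of the $(2n+1)$-dimensional $W_{s_{i}-\epsilon}$, the real difficulty is that the overtwisted ball $B_{OT}$ has a \emph{definite, non-shrinkable} size, so one cannot simply contract the certificate into the complement of $N$. The essential role of looseness is to disentangle the surgery from the certificate without shrinking it: by Murphy's relative h-principle for loose Legendrians \cite{Murphy:Loose} one should be able to contact-isotope $\Leg^{\Stable}_{s_{i}-\epsilon,\zeta}$, rel a neighborhood of $B_{OT}$, so that both it and a surgery-supporting neighborhood $N$ avoid $B_{OT}$ entirely -- heuristically, the loose chart of the attaching sphere supplies the flexibility needed to route it around the fixed ball. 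Making this disentanglement rigorous is the crux of the proof, and it is exactly where flexibility (rather than an arbitrary critical handle) is indispensable: without looseness the convex boundary of a critical Weinstein cobordism over an overtwisted concave boundary need not be overtwisted. Once the certificate sits in $W_{s_{i}-\epsilon}\setminus N$, the embedding of the first step completes the single-handle claim, and the reduction then yields overtwistedness of $\partial^{+}W$.
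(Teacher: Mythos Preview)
The paper offers no proof of this lemma; it is simply recorded as a citation of \cite[Proposition~2.12]{CMP:OT}. So there is no argument to compare against, and for the purposes of this paper the result is a black box.

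Your outline is a reasonable sketch and correctly isolates the non-trivial content in the critical-index case, but as you yourself flag, the key step --- routing the loose attaching sphere off a fixed overtwisted ball $B_{OT}$ via Murphy's h-principle --- is left as an assertion rather than carried out. The issue you raise is genuine: applying \cite{Murphy:Loose} ``rel a neighborhood of $B_{OT}$'' requires the attaching sphere to have a loose chart \emph{in the complement of} $B_{OT}$, which is not what the flexibility hypothesis provides a priori (it only gives a loose chart somewhere in $W_{s_{i}-\epsilon}$, possibly overlapping $B_{OT}$). Closing this gap is exactly where the h-principle machinery of \cite{BEM:Overtwisted, CMP:OT} is needed --- roughly, one goes the other direction and produces an overtwisted certificate disjoint from the attaching sphere and its loose chart, rather than moving the sphere rel a fixed certificate. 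So your proposal correctly locates the difficulty but does not resolve it; a complete proof would have to either cite \cite{CMP:OT} (as the paper does) or reproduce that argument.
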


\begin{lemma}\label{Lemma:PlusOneSurgeryPreservesOT}
The result of a contact anti-surgery along a Legendrian sphere $\Leg$ in an overtwisted $\Mxi$ is overtwisted as well.
\end{lemma}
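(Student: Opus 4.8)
The plan is to reduce the statement entirely to the equivalence between overtwistedness and looseness of the standard Legendrian unknot in Theorem \ref{Thm:CMP}, used in both directions, combined with the fact that a contact anti-surgery is a strictly local modification. Write $P$ for the contact manifold obtained by anti-surgery along $\Leg \subset \Mxi$.

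First I would pin down the local model of the operation. Using the ``cutting out a Lagrangian disk'' description of anti-surgery recorded in \S\ref{Sec:WeinsteinSurgery} (cf. \cite{CET:PlusOneFilling, DLMW:Antisurgery}), the manifolds $P$ and $M$ are canonically contactomorphic outside a neighborhood $N$ of the compact set $\Leg \cup \Delta$, where $\Delta$ is the Lagrangian slice disk bounded by $\Leg$ coming from the unstable manifold $\Unstable(\zeta, X_{\betaW})$. The essential structural input here is that $N$ may be shrunk to an arbitrarily thin neighborhood of this compact set, so that $M \setminus N$ is a nonempty open set agreeing with the corresponding region of $P$.

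Next I would exploit overtwistedness of $\Mxi$. By Theorem \ref{Thm:CMP}(2) the standard Legendrian unknot $\Leg_{U}$ is loose. Since $\Leg_{U}$ is defined inside a Darboux ball and any two Darboux balls are contactomorphic, I may place a Darboux ball $B \subset M \setminus N$ and take $\Leg_{U} \subset B$ to be standard; it is then loose in $\Mxi$, and crucially its loose chart can be taken to lie entirely inside $B$. Now perform the anti-surgery along $\Leg$: because it is supported in $N$ and $B \cap N = \emptyset$, the ball $B$, the unknot $\Leg_{U} \subset B$, and its loose chart all survive unchanged into $P$. Thus $\Leg_{U}$ is a standard Legendrian unknot in a Darboux ball of $P$ which is still loose, and the reverse implication of Theorem \ref{Thm:CMP}(2) — looseness of the standard unknot forces overtwistedness — yields that $P$ is overtwisted.

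I expect the only delicate points to be the two locality claims: (i) that the anti-surgery is genuinely supported in a neighborhood of $\Leg \cup \Delta$ that can be arranged disjoint from a prescribed Darboux ball, and (ii) that looseness, being certified by a loose chart local to $B$, is undisturbed by a modification of the ambient contact structure away from $B$. Both follow from the Weinstein-handle model of anti-surgery together with the local nature of the definition of looseness, but they are the steps where one must verify that nothing global is affected; everything else is a formal invocation of Theorem \ref{Thm:CMP} in its two directions.
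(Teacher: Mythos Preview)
Your argument contains a genuine error at the step you yourself flag as delicate. You claim that the loose chart for $\Leg_{U}$ ``can be taken to lie entirely inside $B$.'' This is false: $B$ is a Darboux ball, hence contactomorphic to a ball in standard $(\R^{2n+1},\xi_{std})$, which is tight. If the standard unknot in $B$ had a loose chart contained in $B$, then by Theorem~\ref{Thm:CMP} the Darboux ball itself would be overtwisted. So looseness of $\Leg_{U}$ in $\Mxi$ is certified by a chart that necessarily extends outside $B$ into the rest of $M$, and there is no a priori reason this chart avoids the surgery region $N$. Your point (ii) therefore does not hold as stated.

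The weaker claim you would need---that some loose chart for $\Leg_{U}$ lies in $M\setminus N$---is equivalent to asking that the complement of $\Leg$ is overtwisted, i.e.\ that $\Leg$ itself is loose. This is exactly the content the paper supplies and you omit. The paper's proof argues directly that $\Leg$ is loose: since $\Leg_{U}$ is loose (item (2) of Theorem~\ref{Thm:CMP}) and $\Leg$ is Legendrian isotopic to the connected sum $\Leg\#\Leg_{U}$, the loose chart of $\Leg_{U}$ becomes a loose chart for $\Leg$. One then invokes item (3) of Theorem~\ref{Thm:CMP}: the result of contact anti-surgery along a loose sphere is overtwisted. Your locality approach can be repaired, but only by first establishing that $\Leg$ has overtwisted complement, at which point it collapses to the paper's argument.
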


\begin{proof}
This is immediate from Theorem \ref{Thm:CMP}. Since the Legendrian unknot in $\Mxi$ is loose and $\Leg$ can be viewed as a connected sum of itself with the unknot, $\Leg$ is loose as well. So we are performing contact anti-surgery on a loose Legendrian sphere.
\end{proof}

\subsection{Model weinstein handles}\label{Sec:HandleAbstract}

While Weinstein handles appear naturally about critical points in Weinstein manifolds, we want to have models on hand so that they can be used in \S \ref{Sec:AmbientHandles} to modify contact submanifolds by \emph{ambient surgeries} in the style of \cite[\S 11.A]{Rolfsen}. Here we develop model Weinstein handles with variable geometry and ``sharp edges'', making their boundaries branched manifolds. Varying the geometries of the handles will then correspond to application of contact isotopies to the inputs and outputs of surgeries. Similar constructions in which Weinstein handles are carefully shaped (for dynamical applications) appear in \cite{Avdek:Dynamics, Cieliebak:SubcriticalSH, Ekholm:SurgeryCurves, HT:Chord}.

Our model for a $\dim=2n, \ind=k$ handle will be $(H_{k}, \beta_{k}, F_{k})$ with $H_{k} = H_{k}(\vec{\epsilon})$ a subset of $\R^{2n}$ to be defined shortly. We define $\beta_{k}$ and $F_{k}$ on $\R^{2n}$ and calculate
\begin{equation*}
\begin{gathered}
\beta_{k} = \half\sum_{1}^{n}(x_{i}dy_{i} - y_{i}dx_{i}) + d\left(\sum_{1}^{k}x_{j}y_{j} \right), \quad F_{k} = \sum_{1}^{k} \left(x_{i}^{2} - y_{i}^{2}\right) + \sum_{k+1}^{n} \left(x_{j}^{2} + y_{j}^{2}\right),\\
\implies d\beta_{k} = \sum_{1}^{n} dx_{i}\wedge dy_{j}, \quad 2X_{\beta_{k}} = \sum_{1}^{k} \left(3x_{i}\partial_{x_{i}} - y_{i}\partial_{y_{i}}\right) + \sum_{k+1}^{n} \left(x_{j}\partial_{x_{j}} + y_{j}\partial_{y_{j}}\right).
\end{gathered}
\end{equation*}
There is a single critical point at $\zeta = 0$. Let's rewrite the coordinates as
\begin{equation*}
\vec{x}=(x_{1}, \dots, x_{n}), \quad \vec{y}_{1,k}=(y_{1},\dots,y_{k}),\quad  \vec{y}_{k+1,n}=(y_{k+1},\dots,y_{n})
\end{equation*}
so that $\R^{2n} = \R^{n}_{\vec{x}}\times \R^{k}_{\vec{y}_{1,k}}\times \R^{n-k}_{\vec{y}_{k+1,n}}$. Then
\begin{equation*}
\begin{gathered}
\Stable(\zeta, X_{\beta_{k}}) = \{ 0 \}_{\vec{x}} \times \R^{k}_{\vec{y}_{1,k}} \times \{0\}_{y_{k+1,n}},\\
\Unstable(\zeta, X_{\beta_{k}}) = \R^{n}_{\vec{x}} \times \{0\}_{y_{1,k}} \times \R^{n-k}_{\vec{y}_{k+1,n}}
\end{gathered}
\end{equation*}
so we can view
\begin{equation*}
\R^{2n} = \Stable(\zeta, X_{\beta_{k}})_{v^{\Stable}}  \times \Unstable(\zeta, X_{\beta_{k}})_{v^{\Unstable}}, \quad v^{\Stable} = \vec{y}_{1, k}, \quad v^{\Unstable}=(\vec{x},\vec{y}_{k+1, n}).
\end{equation*}

As a first approximation to $H_{k}$, consider $H^{\square}_{k} = \disk^{\Stable}_{v^{\Stable}} \times \disk^{\Unstable}_{v^{\Unstable}} \subset \R^{n}$ where $\disk^{\Stable}$ is a disk in the stable manifold and $\disk^{\Unstable}$ is a disk in the unstable manifold. For now we'll disregard the radii of the disks. Then 
\begin{equation*}
\partial^{-}H^{\square}_{k} = (\partial \disk^{\Stable}) \times \disk^{\Unstable}, \quad \partial^{+}H^{\square}_{k} = \disk^{\Stable} \times (\partial \disk^{\Unstable})
\end{equation*}
and the corner of the manifold is $\partial\disk^{\Stable} \times \partial \disk^{\Unstable}$ where the tangent spaces of the $\partial^{\pm}H^{\square}_{k}$ are orthogonal.

\begin{figure}[h]
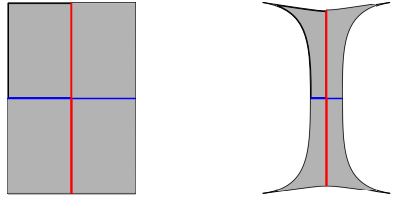

\begin{overpic}[scale=.4]{pointy_corners.eps}
\end{overpic}
\caption{The manifolds $H_{k}^{\square}$ and $H_{k}$ appear on the left and right, respectively. Here and throughout, we color stable manifolds blue and unstable manifolds red following the mnemonic ``the blue disk is below the critical point''.}
\label{Fig:PointyCorners}
\end{figure}

The corner can be ``sharpened'' so that the tangent spaces of the $\partial^{\pm}H_{k}$ agree along the corner $\partial^{+}H_{k} \cap \partial^{-}H_{k}$ to obtain a handle $H_{k} \subset H^{\square}_{k}$. See Figure \ref{Fig:PointyCorners}. We provide a family of models with variable geometry.

Let $B^{\pm}_{\epsilon} = B^{\pm}_{\epsilon}(\rho), \rho \in \R_{\geq 0}, \epsilon >0$ be smooth functions characterized by the following properties:
\be
\item $B^{-}(\rho) = 0$ for $\rho < \epsilon$ and $B^{-}(\rho) < \rho$ for $\rho < 3\epsilon$ along which the function is increasing,
\item $B^{+}(\rho) = 2\epsilon$ for $\rho < \epsilon$, $B^{+}$ is increasing and $< \rho$ when $\rho < 3\epsilon$, and
\item $B^{\pm}(\rho)=\rho$ for $\rho \geq 3\epsilon$.
\ee
For a pair of positive constants $\vec{\epsilon} = (\epsilon^{\Stable}, \epsilon^{\Unstable})$ define $\epsilon^{\max} = \max\{ \epsilon^{\Stable}, \epsilon^{\Unstable} \}$ and
\begin{equation*}
\begin{gathered}
H_{k}(\vec{\epsilon}) = \left\{ B^{+}_{\epsilon^{\Stable}}(\norm{v^{\Stable}}) \geq B^{-}_{\epsilon^{\Stable}}(\norm{v^{\Unstable}}), \quad B^{+}_{\epsilon^{\Unstable}}(\norm{v^{\Unstable}}) \geq B^{-}_{\epsilon^{\Unstable}}(\norm{v^{\Stable}}), \quad \norm{v^{\Stable}}, \norm{v^{\Unstable}} \leq 3\epsilon^{\max}\right\},\\
\partial^{-}H_{k}(\vec{\epsilon}) = \left\{ B^{+}_{\epsilon^{\Stable}}(\norm{v^{\Stable}}) = B^{-}_{\epsilon^{\Stable}}(\norm{v^{\Unstable}})\right\},\\
\partial^{+}H_{k}(\vec{\epsilon}) = \left\{ B^{+}_{\epsilon^{\Unstable}}(\norm{v^{\Unstable}}) = B^{-}_{\epsilon^{\Unstable}}(\norm{v^{\Stable}})\right\},\\
\partial^{-}H_{k}(\vec{\epsilon}) \cap \partial^{+}H_{k}(\vec{\epsilon}) = \left\{ \norm{v^{\Stable}} = \norm{v^{\Stable}} = 3\epsilon^{\max}\right\}.
\end{gathered}
\end{equation*}
Then the desired transversality and corner tangency conditions are satisfied for $H_{k}(\vec{\epsilon})$.

The $\vec{\epsilon}$ will be omitted from notation when it is unimportant. We will use it to either shrink the handle (by making $\epsilon^{\max}$ small) or make it ``long and thin'' as in the right hand side of Figure \ref{Fig:PointyCorners} by fixing $\epsilon^{\Unstable}$ and choosing $\epsilon^{\Stable} \ll \epsilon^{\Unstable}$.

\subsection{Liouville hypersurfaces and symplectic divisor cobordisms}\label{Sec:HypersurfaceDivisor}

Let $(\Wdivisor, \betaWD)$ be an exact symplectic manifold of $\dim=2n$ and let $\Mxi$ be a contact manifold of $\dim=2n+1$. A \emph{Liouville embedding} is an inclusion $\Wdivisor \subset M$ such that there is a subset of $\Mxi$ of the form $N(\Wdivisor) = [-\epsilon, \epsilon]_{t} \times \Wdivisor \subset M$ within which $\Wdivisor = \{t=0\}$ and $\xi = \ker (dt + \beta)$.

The image of a Liouville embedding is an \emph{exact symplectic hypersurface}. If $(\Wdivisor, \betaWD)$ is a Liouville cobordism, the image is a \emph{Liouville hypersurface}. If additionally $X_{\betaWD}$ has a Lyapunov function, then $\Wdivisor \subset \Mxi$ is a \emph{Weinstein hypersurface}. The convex and concave boundaries $\partial^{\pm}V$ of a Liouville hypersurface are disjoint contact divisors in $\Mxi$. In \cite{Avdek:Liouville} Liouville hypersurfaces are required to have $\partial^{-}V = \emptyset$.

\begin{ex}
Every Legendrian $\Leg \subset \Mxi$ has a neighborhood of the form
\begin{equation}\label{Eq:OneJet}
N(\Leg) = [-\epsilon, \epsilon] \times \disk^{\ast}\Leg, \quad \xi = \ker(dt + \beta_{can}).
\end{equation}
The \emph{contact push-off} of $\Leg$ is $\divSet_{\Leg} = \partial \{ t = 0\}$. Implicitly, $\MxiDivSet$ is the unit cotangent bundle of $\Leg$.
\end{ex}

Let $(W, \omegaW)$ be a symplectic cobordism containing a $\codim=2$ symplectic submanifold $(\Wdivisor, \omegaWD = \omegaW|_{T\Wdivisor})$. We say that $\Wdivisor$ is a \emph{symplectic divisor} (or simply, a divisor) of $(W, \omega)$. If $\partial^{\pm}\Wdivisor$ is contained in $\partial^{\pm}W$ with $\Wdivisor$ transverse to $\partial^{\pm}W$ then $\Wdivisor$ is a \emph{divisor cobordism}. If $(W, \betaW)$ and $(\Wdivisor, \betaWD=\beta_{W}|_{T\Wdivisor})$ are Liouville, then $\Wdivisor$ is a \emph{Liouville divisor cobordism}. These are called exact relative cobordisms in \cite{CFC:HCSubmanifold}. If $\Wdivisor$ is a Liouville divisor cobordism and $\betaW$ admits a Lyapunov function $F_{W}$ restricting to a Lyapunov function $F_{\Wdivisor} = F_{W}|_{\Wdivisor}$, then $\Wdivisor$ is a \emph{Weinstein divisor cobordism}.

Provided a Liouville hypersurface $\Wdivisor \subset \Mxi$ take a function $F_{0}: \Wdivisor \rightarrow [-C, C]$ for which $F_{0}^{-1}(\pm C) = \partial^{\pm}\Wdivisor$ for some $C > 0$. For a fixed $F_{0}$ and $\delta > 0$ sufficiently small the graph
\begin{equation*}
\WdivisorLift = \{ (F_{\Wdivisor}(z), z)\ : z\in \Wdivisor\} \subset [-\delta C, \delta C] \times M
\end{equation*}
of $F_{V} = \delta F_{0}$ inherits a Liouville form
\begin{equation*}
\betaWDLift = e^{s}\alpha|_{T\WdivisorLift} = e^{F_{\Wdivisor}}\betaWD
\end{equation*}
from the symplectization by Lemma \ref{Lemma:BetaFunctionMultiple}. We say that $\WdivisorLift \subset [-C, C] \times M$ is a \emph{lift of $\Wdivisor$}.

For Weinstein hypersurfaces, we always take the lifting function to be a Lyapunov function $F_{\Wdivisor}$ for $\betaWD$. Then $(\WdivisorLift, \betaWDLift)$ is Weinstein by Lemma \ref{Lemma:BetaFunctionMultiple}. By the same lemma,
\begin{equation}\label{Eq:BetaLift}
X_{\WdivisorLift} = \left(1 + dF_{V}(X_{\betaWD})\right)^{-1}X_{\betaWD}.
\end{equation}
Moreover, when $\betaWD$ is Weinstein then the above assumptions imply that
\begin{equation*}
F_{\WdivisorLift} = s|_{\WdivisorLift} = F_{\Wdivisor}
\end{equation*}
is Lyapunov for $\betaWDLift$. For a lift of a Weinstein hypersurface the zeros of $\betaWDLift$ are of the form
\begin{equation*}
\widetilde{\zeta} = (F_{\Wdivisor}(\zeta), \zeta) \subset \WdivisorLift \subset [-C, C]_{s} \times M
\end{equation*}
for zeros $\zeta$ of $\betaWD$. Likewise
\begin{equation*}
\Stable(\zeta, X_{\betaWD}) = \pi_{M}(\Stable(\widetilde{\zeta}, X_{\betaWDLift})), \quad \Unstable(\zeta, X_{\betaWD}) = \pi_{M}(\Unstable(\widetilde{\zeta}, X_{\betaWDLift}))
\end{equation*}
since the Liouville vector fields for $X_{\Wdivisor}$ and $X_{\WdivisorLift}$ are parallel.

\section{Ambient surgery and contact stabilization}\label{Sec:Stabilization}

In this section we define stabilization of contact divisors $\divSet$ inside of $\Mxi$ of $\dim M = 2n+1 \geq 5$. The stabilization construction requires a definition of ambient Weinstein surgeries as a prerequisite, worked out in \S \ref{Sec:AmbientHandles}. The $\dim M=3$ case is described diagrammatically in \S \ref{Sec:LowDimHandles}. In \S \ref{Sec:divisorCobordism} we see that such surgeries, as well as certain surgeries applied to $\Mxi$, determine Weinstein divisor cobordisms between contact submanifolds. The ambient surgery construction is analogous to the Lagrangian cobordism construction of Ekholm, Honda, and K\'{a}lm\'{a}n \cite{EHK:LegCobordism}.

After defining stabilization of contact divisors in \S \ref{Sec:StabilizationDef}, basic examples are provided in \S \ref{Sec:StandardStabilization}, and basic properties are outlined in \S \ref{Sec:StabilizationProperties}. In \S \ref{Sec:AntiStabilization}, we define an \emph{anti-stabilization operation} on contact divisors, which interprets the constructions of \cite{CasalsEtnyre} in the language of ambient Weinstein surgeries. Finally, in \S \ref{Sec:HighDimLowDim} curious readers can see the effect of our high-dimensional stabilization on transverse links in contact $3$-manifolds.

\subsection{Ambient Weinstein surgeries}\label{Sec:AmbientHandles}

Let $(H_{k}, \beta_{k}, F_{k})$ be a $\dim=2n, \ind=k$ model Weinstein handle as described in \S \ref{Sec:HandleAbstract}. The image of Liouville embedding of a Weinstein handle into $\Mxi$ is an \emph{ambient Weinstein handle}, denoted
\begin{equation*}
(H_{k}, \beta_{k}) \subset \Mxi.
\end{equation*}
Since Weinstein handles have a single critical point, we can speak of their \emph{stable and unstable manifolds} unambiguously, which will be denoted
\begin{equation*}
\Stable_{k}, \Unstable_{k} \subset H_{k},
\end{equation*}
respectively. By Lemma \ref{Lemma:GeneralBetaVanishing}, $\Stable_{k}$ is isotropic in $\Mxi$ with $\partial\Stable_{k}$ isotropic in the open contact submanifold $\partial^{-}H_{k}$ of $\Mxi$. Likewise when $k=n$, both $\Stable_{k}$ and $\Unstable_{k}$ are Legendrian disks in $\Mxi$  and $\partial \Unstable_{k}$ is a Legendrian sphere in $\partial^{+}H_{n}$.

\begin{figure}[h]
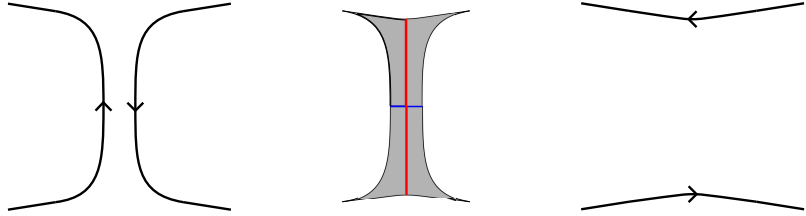

\begin{overpic}[scale=.4]{ambient_surgery.eps}
\end{overpic}
\caption{An ambient Weinstein surgery turns $\Gamma^{-}$ (left) into $\Gamma^{+}$ (right), or vice versa by an anti-surgery.}
\label{Fig:AmbientSurgery}
\end{figure}

Suppose we have a contact divisor $\divSet^{-} \subset \Mxi$ and an ambient Weinstein handle $(H_{k}, \beta_{k}) \subset \Mxi$ for which $H_{k} \cap \divSet^{-} = \partial^{-}H_{k}$. Then $(H_{k}, \beta_{k}, \Gamma^{-})$ is \emph{ambient Weinstein surgery data}. It follows that
\begin{equation*}
\Gamma^{+} = (\Gamma^{-} \setminus \partial^{-}H_{k}) \cup \partial^{+}H_{k}
\end{equation*}
is a contact divisor of $\Mxi$ is the result of an \emph{ambient Weinstein surgery} of $\Gamma^{-}$ along $H_{k}$. 

Likewise if we have a divisor $\Gamma^{+}$ and a $(H_{n}, \beta_{n}) \subset \Mxi$ for which $\partial^{+}H_{n} = H_{n}\cap \divSet^{+}$, then $(H_{n}, \beta_{n}, \Gamma^{+})$ is a \emph{Weinstein anti-surgery data} and
\begin{equation*}
\Gamma^{-} = (\Gamma^{+} \setminus \partial^{+}H_{n}) \cup \partial^{-}H_{n}
\end{equation*}
is the result of an \emph{ambient Weinstein anti-surgery} of $\Gamma^{+}$ along $H_{n}$. Note that ``anti-surgery'' is reserved for $\ind =n$ handles. In this critical index case, $(\divSet^{+}, \xi_{\divSet^{+}})$ is (intrinsically) the result of a contact surgery along the Legendrian sphere $\partial \Stable_{n} \subset \divSet^{-}$. Likewise $(\divSet^{-}, \xi_{\divSet^{-}})$ is the result of a contact anti-surgery along $\partial \Unstable_{n} \subset \divSet^{+}$.

\subsection{Ambient handle attachments applied to transverse links}\label{Sec:LowDimHandles}

Let's work out how ambient Weinstein surgeries apply to transverse links in contact $3$-manifolds. In this case the only handles to consider have $\ind=0$ and $1$. To build the model handles in this case, we work with the contact form $\alpha = dt - ydx$ on $M = \R^{3}_{x, y, t}$ allowing us to draw pictures in the front ($x,t$) projection. Again, we refer to \cite{Etnyre:KnotNotes} for background.

For the $\ind=0$ handle attachment, our handle is a Weinstein disk, $\disk$. So starting from a $\divSet^{-}$, the effect of the handle attachment yields a $\divSet^{+}$ which is the disjoint union of $\divSet^{-}$ and a standard transverse unknot $\unknot$ bounding a $\disk \subset M$ which is disjoint from $\divSet^{-}$. This can easily be drawn in the front as in Figure \ref{Fig:TUnknot}.

\begin{figure}[h]
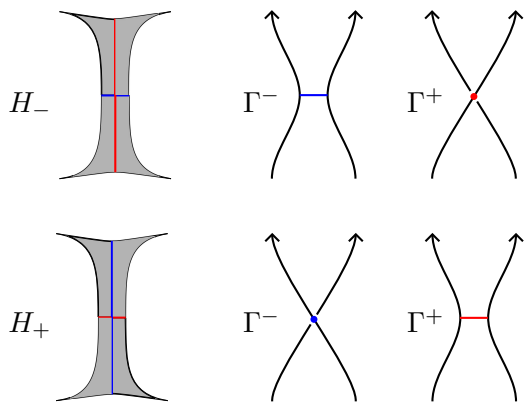

\begin{overpic}[scale=.35]{low_dim_handles.eps}
\put(-10, 62){$H_{-}$}
\put(-10, 15){$H_{+}$}
\put(40, 62){$\divSet^{-}$}
\put(40, 15){$\divSet^{-}$}
\put(75, 62){$\divSet^{+}$}
\put(75, 15){$\divSet^{+}$}
\end{overpic}
\caption{On the top row we attach the handle $H_{-}$ (left in the Lagrangian $xy$ projection) to a $\divSet^{-}$ (center in the front $xt$ projection) to obtain a $\divSet^{+}$ (right in the front projection). As always, the stable manifolds of the handle are colored blue and unstable manifolds are colored red. The bottom row works out the analogous picture for a $H_{+}$ handle attachment.}
\label{Fig:LowDimHandles}
\end{figure}

For the $\ind=1$ handle attachments, we consider two local models. Let $H_{x, y} \subset \R^{2}_{x, y}$ be a subset with sharp corners centered about $(0, 0)$ as shown in the left column of Figure \ref{Fig:LowDimHandles}. Let $f_{\pm} = \pm 2xy$ and let $H_{\pm} \subset \R^{3}$ be the $t=f_{\pm}$ graph over $H_{\pm}$. The Liouville vector field for $\beta_{\pm} = \alpha|_{TH_{\pm}}$ is computed in $x,y$ coordinates as
\begin{equation*}
X_{\beta_{\pm}} = \pm 2x\partial_{x} + (1 \mp 2)y\partial_{y}
\end{equation*}
having a single zero at $(0, 0, 0) \in \R^{3}$. As always, write $\Stable$ and $\Unstable$ for the stable and unstable manifolds of this zero. The smooth boundary components of $H_{\pm}$ lift to transverse link segments as shown in Figure \ref{Fig:LowDimHandles}.

For $H_{-}$ (in the top row of the figure), $\Stable$ is contained in the $x$ axis with boundary on $\divSet^{-}$ and $\Unstable$ is contained in the $y$ axis, projecting to a positive crossing of $\divSet^{+}$ in the font projection. So the effect of the surgery is to add a positive crossing.

For $H_{+}$ (in the bottom row of the figure), $\Stable$ is contained in the $y$ axis, projecting to negative crossing of $\divSet^{+}$, and $\Unstable$ is contained in the $x$ axis with boundary on $\divSet^{+}$. So the effect of the surgery is to resolve a negative crossing.

\subsection{Divisor cobordisms from ambient Weinstein surgeries}\label{Sec:divisorCobordism}

The following lemma says that ambient contact surgeries determine Weinstein divisor cobordisms between contact divisors.

\begin{lemma}
Suppose that $\divSet^{-} \subset \Mxi$ is a contact divisor in a $\dim = 2n+1$ contact manifold and that $(H_{k}, \beta_{k}, \divSet^{-})$ is Weinstein surgery data determining a contact divisor $\divSet^{+}$. Then there is a Weinstein divisor cobordism $\Wdivisor$ in a finite symplectization $(W, \betaW) = ([-1, 1]_{s}\times M, e^{s}\alpha)$ of $\Mxi$ such that
\be
\item $\partial^{\pm}\Wdivisor = \divSet^{\pm}$ and $\pi_{M}\Wdivisor = \divSet^{-} \cup H_{k} \cup \divSet^{+}$,
\item $\pi_{M}$ restricted to each of $\pi_{M}^{-1}(\divSet^{\pm})$ is a diffeomorphism onto its image,
\item $\betaWD$ has a unique zero within the lift of $H_{k}$, having the same index as the unique zero of $\beta_{k}$.
\ee
\end{lemma}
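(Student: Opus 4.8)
The plan is to construct the divisor cobordism $\widetilde{V}$ as the lift of the union $\divSet^- \cup H_k \cup \divSet^+$, using the lifting construction of \S\ref{Sec:HypersurfaceDivisor} applied to an appropriate Lyapunov function. The key observation is that the Weinstein surgery data already packages together the three pieces: near the attaching region, $H_k$ meets $\divSet^-$ precisely along $\partial^-H_k$, and the surgered divisor $\divSet^+$ replaces $\partial^-H_k$ with $\partial^+H_k$. First I would assemble a smooth hypersurface $\Sigma = (\divSet^- \setminus \partial^-H_k) \cup H_k \cup (\divSet^+ \setminus \partial^+H_k)$ inside $M$; away from the handle this is just a cylinder $\divSet^\pm$, and inside the handle it is the model $H_k \subset \R^{2n}$. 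The content is to embed $\Sigma$ as a Weinstein divisor cobordism in the symplectization $(W,\beta_W) = ([-1,1]_s \times M, e^s\alpha)$ so that its convex and concave boundaries are exactly $\divSet^+$ and $\divSet^-$.

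First I would choose a Lyapunov function $F_0 : \Sigma \to [-C, C]$ with $F_0^{-1}(-C) = \divSet^-$, $F_0^{-1}(C) = \divSet^+$, and such that on the handle region $F_0$ agrees with the model Morse function $F_k$ (suitably rescaled and reparametrized so its level sets at the ends match the collars of $\divSet^\pm$). I must arrange that $F_0$ is genuinely Lyapunov for the Liouville field $X_{\beta_k}$ on the handle and for the (trivial) Liouville dynamics on the cylindrical parts, which requires only that $dF_0(X_{\beta_k}) > 0$ away from the single critical point $\zeta$ of $\beta_k$; this holds on the handle since $F_k$ is Lyapunov there, and on the cylindrical parts I can take $F_0$ to depend only on the collar coordinate $t$ so that it increases monotonically along the Liouville flow. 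Then by the lifting construction of \S\ref{Sec:HypersurfaceDivisor}, setting $\widetilde{V} = \{(F_V(z), z) : z \in \Sigma\}$ with $F_V = \delta F_0$ for small $\delta > 0$ gives a Liouville divisor cobordism, and since $\beta_V = \beta_W|_{T\Sigma}$ inherits $F_0$ as a Lyapunov function, $\widetilde{V}$ is Weinstein.

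Granting this, the three enumerated properties follow essentially from the construction. For (1), the convex and concave boundaries are $\{F_V = \pm\delta C\}$, which are precisely the lifts of $\divSet^+$ and $\divSet^-$; and $\pi_M \widetilde{V} = \Sigma = \divSet^- \cup H_k \cup \divSet^+$ by definition of the graph. For (2), the projection $\pi_M$ is a diffeomorphism on $\pi_M^{-1}(\divSet^\pm)$ because over the cylindrical ends the graph is a section of the collar, hence injective with injective differential. For (3), the zeros of $\beta_V = e^{F_V}\beta_W|_{T\Sigma}$ coincide with the critical points of $F_0$ by the defining Weinstein property, and by the parallel-Liouville-field observation at the end of \S\ref{Sec:HypersurfaceDivisor} the Liouville field $X_{\widetilde{V}}$ is a positive rescaling of $X_{\beta_k}$ on the handle, so its unique zero is the lift $\widetilde{\zeta}$ of $\zeta$ and has the same Morse index $k$.

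**The hard part will be** the transition region where the cylindrical collar of $\divSet^-$ meets $\partial^-H_k$ (and likewise for $\divSet^+$): I must verify that the tangency and transversality conditions arranged for the model handle in \S\ref{Sec:HandleAbstract} — namely that the tangent spaces of $\partial^\pm H_k$ agree along the sharpened corner — are compatible with gluing onto the product collar $[-\epsilon,\epsilon]_t \times \divSet^\pm$ of the Liouville embedding, so that $\Sigma$ is genuinely smooth (or smoothable without disturbing the Liouville/Lyapunov structure) and so that $F_0$ extends smoothly across the seam while remaining Lyapunov. The sharp-edge geometry of the model handles was engineered precisely so that the contact form restricts compatibly and the boundary tangencies match, and I expect that the variable-geometry parameter $\vec{\epsilon}$ gives enough freedom to make $\partial^\pm H_k$ agree with the collar hypersurfaces to infinite order; once smoothness of $\Sigma$ and of $F_0$ is secured, the remaining verifications are routine applications of Lemmas \ref{Lemma:BetaFunctionMultiple} and \ref{Lemma:GeneralBetaVanishing}.
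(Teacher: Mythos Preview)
There is a dimension mismatch in your construction of $\Sigma$. The sets $\divSet^{-} \setminus \partial^{-}H_{k}$ and $\divSet^{+} \setminus \partial^{+}H_{k}$ are the \emph{same} $(2n-1)$-dimensional submanifold of $M$ (the ambient surgery only modifies $\divSet^{-}$ inside the handle region), while $H_{k}$ is $2n$-dimensional. So your $\Sigma \subset M$ is not a smooth $2n$-dimensional hypersurface, and the lifting construction of \S\ref{Sec:HypersurfaceDivisor} cannot be applied to it. There is a related obstruction visible already in item (2): a graph $\widetilde{V} = \{(F_{V}(z), z) : z \in \Sigma\}$ is single-valued over $M$, so it cannot have two distinct boundary components $\partial^{\pm}\Wdivisor$ both projecting diffeomorphically onto the common locus $\divSet^{-} \setminus \partial^{-}H_{k} = \divSet^{+} \setminus \partial^{+}H_{k}$.

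The paper instead lifts only $\Int(H_{k})$ as the graph of a function $F: \Int(H_{k}) \to (-1,1)$ satisfying $\lim_{z\to\partial^{\pm}H_{k}} F(z) = \pm 1$ together with divergence conditions on the normal derivatives near $\partial^{\pm}H_{k}$. These force the graph over $H_{k}$ to become asymptotically \emph{vertical} at its boundary, so that its closure glues smoothly to the vertical cylinder $[-1,1]_{s} \times (\divSet^{-}\setminus \partial^{-}H_{k})$ inside the symplectization. The cobordism $\Wdivisor$ is therefore not a graph over anything in $M$; away from the handle it is a product in the $s$-direction, projecting onto the $(2n-1)$-dimensional set $\divSet^{-}\setminus \partial^{-}H_{k}$. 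Your instinct that the seam is the hard part is correct, but the resolution lives in the symplectization rather than in $M$: one lets $dF$ blow up at $\partial^{\pm}H_{k}$ instead of matching collars of a hypothetical $2n$-dimensional hypersurface in $M$. Once this is set up, your reasoning for property (3) goes through essentially as written, using $F(\zeta)=dF(\zeta)=0$ and monotonicity of $F$ along the stable and unstable directions.
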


\begin{proof}
We assume that our contact form $\alpha$ for $\Mxi$ takes the form $dt + \beta_{k}$ on a standard neighborhood of the handle. We lift the $\Int(H_{k}) \subset M$ to the symplectization as the graph as a function $s = F:\Int(H_{k}) \rightarrow (0, 1)$, take the closure of this graph in $[0, 1]\times M$, and then append a vertical piece
\begin{equation*}
[-1, 1] \times (\divSet^{-} \setminus (\partial^{-}H_{k})) = [-1, 1] \times (\divSet^{+} \setminus (\partial^{+}H_{k})).
\end{equation*}
The union of the closure of the graph and the vertical piece gives the cobordism $\Wdivisor$. We require that $F$ satisfies the following conditions:
\be
\item $\lim_{z \rightarrow \partial^{\pm}H_{k}}F(z) = \pm 1$.
\item $F(\zeta) = dF(\zeta) = 0$ where $\zeta$ is the unique zero of $\beta_{k}$.
\item $dF\left(\grad\norm{v^{\Stable}}^{2}\right) \leq 0 \leq dF\left(\grad\norm{v^{\Unstable}}^{2}\right)$.
\item $\lim_{z \rightarrow \partial^{-}}dF\left(\grad\norm{v^{\Stable}}^{2}\right) =-\infty$ and $\lim_{z \rightarrow \partial^{+}}dF\left(\grad\norm{v^{\Unstable}}^{2}\right) =\infty$.
\ee
The last condition ensures that the closure of the graph of $F$ with the vertical piece is smooth. The remaining conditions ensure that $\betaWD$ has a unique non-degenerate zero at $(0, \zeta)$ with $\betaWD = e^{F}\beta_{k}$ over the handle satisfying the conditions of Lemma \ref{Lemma:BetaFunctionMultiple}. Over the vertical pieces we have $\betaWD = e^{s}\alpha_{\divSet}$ where $\alpha_{\divSet} = \alpha|_{T(\divSet^{-} \setminus \partial^{-}H_{k})}$
\end{proof}

\begin{defn}
A Weinstein divisor cobordism as in the above lemma associated to Weinstein surgery data $(H_{k}, \beta_{k}, \divSet^{-})$ is an \emph{ambient Weinstein surgery cobordism}.
\end{defn}

\subsection{Stabilization of contact divisors}\label{Sec:StabilizationDef}

We henceforth assume that $n \geq 2$. A \emph{cusp chart} of a critical index ambient Weinstein handle $(H_{n}, \beta_{n}) \subset \Mxi$ is a cusp chart for $\Unstable_{n}$ which is disjoint from $\partial^{+}H_{n}$. We say that $(H_{n}, \beta_{n})$ is \emph{stabilized} is there is a cusp chart for the handle within which $\Unstable_{k}$ is stabilized.

\begin{defn}
Let $\Gamma \subset \Mxi$ be a contact divisor. We say that $\Gamma$ is \emph{stabilized} if it is the result of a Weinstein anti-surgery associated to some Weinstein anti-surgery data $(H_{n}, \beta_{n}, \Gamma^{+})$ such that the handle is stabilized in a cusp chart which is disjoint from $\Gamma^{+}$.
\end{defn}

\begin{figure}[h]
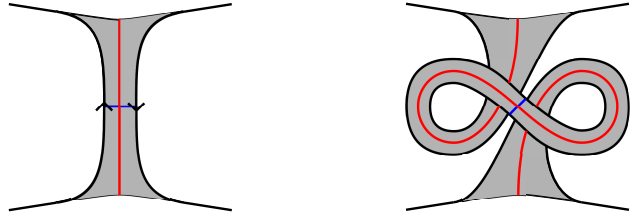

\begin{overpic}[scale=.4]{stable_twist.eps}
\end{overpic}
\caption{A cartoon representation of contact stabilization. On the left, we have a contact divisor with an ambient Weinstein handle of critical index attached. On the right, the stabilized divisor is obtained by applying a Legendrian stabilization to the unstable manifold of the handle. We don't have enough dimensions to depict a stabilized ambient Weinstein handle as in Figure \ref{Fig:StabilizationFront}, so we represent the stabilized handle by twisting the original handle within the ambient space.}
\label{Fig:StableTwist}
\end{figure}

Now we describe how a given contact divisor $\Gamma \subset \Mxi$ can be \emph{stabilized along a Legendrian disk}, that is, modified to produce a new divisor which is a stabilization. We proceed in three steps.
\be
\item Find a critical index ambient Weinstein handle $(H_{n}, \beta_{n}) \subset \Mxi$ for which $(H_{n}, \beta_{n}, \Gamma)$ is Weinstein surgery data. Write $\Gamma^{+}$ for the contact divisor which is the result of the Weinstein surgery.
\item Take a cusp chart for $\Unstable_{n} \subset H_{n}$ which is disjoint from $\Gamma^{+}$. Perform a Legendrian stabilization within the cusp chart to obtain a new Legendrian disk $\Unstable_{n}'$ which agrees with $\Unstable_{n}$ outside of the cusp chart.
\item Let $(H'_{n}, \beta'_{n}) \subset \Mxi$ be an ambient Weinstein handle whose unstable manifold is $\Unstable_{n}'$ and which agrees with $(H_{n}, \beta_{n})$ outside of the cusp/stabilization chart. To ensure that $H_{n}$ and $H_{n}'$ agree outside of the cusp chart, we may shrink the $\epsilon^{\Stable}$ parameter as needed.
\ee
Finally, write $\divSet'$ for the result of Weinstein anti-surgery of $\Gamma^{+}$ along $(H'_{n}, \beta'_{n})$. See Figure \ref{Fig:StableTwist}.

\begin{defn}
In the above notation, $\Gamma'$ is the \emph{stabilization of $\divSet$ along $\Unstable_{n}$}.
\end{defn}

\subsection{Example: Standard stabilizations}\label{Sec:StandardStabilization}

According to our definition of contact stabilization, there are many ways to stabilize a given $\divSet \subset \Mxi$ which will result in a priori different contact divisors. Here we describe a \emph{standard stabilization} which can modify any $\divSet$ within a tubular neighborhood of itself.

Let $\Leg_{U}$ be a Legendrian unknot in a Darboux disk in some $\Mxi$ of $\dim=2n+1$. Then $\Leg_{U}$ admits a neighborhood of the form $N(\Leg_{U})$ as described in Equation \eqref{Eq:OneJet} which is a standard neighborhood of the Liouville hypersurface $\{ t = 0\}$.

The Liouville structure $(\disk^{\ast}\sphere^{n}, \beta_{can}=\sum p_{i}dq_{i})$ has Liouville vector field $X_{\beta_{can}} = \sum p_{i}\partial_{p_{i}}$ admitting a non-Morse Lyapunov function $\norm{p}^{2}$. The simplest ``Morse-ification'' of this Weinstein manifold yields a handle decomposition consisting of a single critical index $H_{n}$ whose stable manifold $\Stable_{n}$ is attached to the boundary of a standard disk, $(\disk^{2n}, \beta_{std})$ along a Legendrian unknot $\Leg_{U}^{n-1} = \R^{n} \cap \partial \disk^{2n}$.

We see that $\{t=0\} \times \partial\disk^{2n}$ is a standard contact unknot $\unknot$ in $N(\Leg_{U})$ and that $(H_{n}, \beta_{n}, \unknot)$ is Weinstein surgery data. So we can define $\unknot'$ to be a stabilization defined using this surgery data. Assuming Theorem \ref{Thm:MainOT}, which will be proved in \S \ref{Sec:MainProofs}, the existence of this stabilized $\unknot'$ proves Theorem \ref{Thm:UnknotNonsimple}.

For a general $\divSet \subset \Mxi$, define its \emph{standard stabilization} to be the result of a $\ind=1$ ambient Weinstein surgery connecting $\divSet$ to $\unknot'$ where the Darboux disk used in the above construction is disjoint from $\divSet$.

\subsection{Basic properties of contact stabilizations}\label{Sec:StabilizationProperties}

Now we collect some basic properties of contact stabilizations, proving most of the statements in Theorem \ref{Thm:MainOT}.

\begin{thm}
If $\divSet' \subset \Mxi$ is a stabilization of some $\Gamma \subset \Mxi$ then the divisors have the same intrinsic contact structures and are formally contact isotopic.
\end{thm}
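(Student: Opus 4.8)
The plan is to prove the two assertions separately, using that the stabilization $\divSet'$ differs from $\Gamma$ only in that the unstable Legendrian disk $\Unstable_{n}$ of the critical-index handle defining $\Gamma$ is replaced by its Legendrian stabilization $\Unstable_{n}'$, the two disks agreeing outside the cusp chart. The single fact driving both halves is that this cusp chart is disjoint from $\partial^{+}H_{n}$, so that $\partial\Unstable_{n} = \partial\Unstable_{n}'$ as Legendrian spheres in the divisor $\Gamma^{+}$ produced by the initial Weinstein surgery.

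For the intrinsic contact structures, recall from \S\ref{Sec:AmbientHandles} that, intrinsically, $\MxiDivSet$ is the result of contact anti-surgery on $(\Gamma^{+}, \xi_{\Gamma^{+}})$ along the Legendrian sphere $\partial\Unstable_{n}$, while $(\divSet', \xi_{\divSet'})$ is the result of contact anti-surgery on the same $(\Gamma^{+}, \xi_{\Gamma^{+}})$ along $\partial\Unstable_{n}'$. Since the two attaching spheres coincide, and the framings supplied by the Lagrangian disks $\Unstable_{n}, \Unstable_{n}'$ agree (the disks are identical near $\partial\Unstable_{n}$, hence induce the same trivialization of its normal bundle; cf.\ the Remark of \S\ref{Sec:WeinsteinSurgery}), the two contact anti-surgeries are the same operation on the same data. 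Hence $\MxiDivSet$ and $(\divSet', \xi_{\divSet'})$ are isomorphic. This is the straightforward half.

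For the formal contact isotopy I would invoke the formal Legendrian isotopy invariance of stabilization \cite{EES:Nonisotopic}: $\Unstable_{n}'$ and $\Unstable_{n}$ are formally Legendrian isotopic through an isotopy supported in the cusp chart. Choosing such an isotopy gives a family $\Unstable_{n}^{\tau}$, $\tau \in [0,1]$, of formal Legendrian disks fixed near $\partial\Unstable_{n}$ and interpolating between $\Unstable_{n}$ and $\Unstable_{n}'$. The idea is then to run the ambient anti-surgery of \S\ref{Sec:AmbientHandles} at the level of formal data: a formal Legendrian disk, fed into the construction, yields a formal contact embedding of the resulting divisor in the sense of Definition \ref{Def:FormalContact} -- the genuine Lagrangian/Legendrian conditions being relaxed to exactly the fiberwise conformally symplectic conditions there. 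The family $\Unstable_{n}^{\tau}$ then produces a continuous family of formal contact embeddings $\Gamma^{\tau}$ with $\Gamma^{0} = \Gamma$ and $\Gamma^{1} = \divSet'$, i.e.\ a formal contact isotopy from $\Gamma$ to $\divSet'$.

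I expect the main obstacle to be this second half: making precise that the ambient anti-surgery is natural with respect to formal data, i.e.\ that a formal Legendrian isotopy of the unstable disk induces a formal contact isotopy of the anti-surgered divisor satisfying all three conditions of Definition \ref{Def:FormalContact}. Concretely, one must track the bundle maps $\Phi_{S,\tau}$ through the handle region and verify that the conformally symplectic condition on $\Phi_{1,\tau}$ propagates across the handle using only the formal data of $\Unstable_{n}^{\tau}$. By contrast, the intrinsic statement is essentially immediate once one observes that the attaching data of the anti-surgery is untouched by the interior stabilization.
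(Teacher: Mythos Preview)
Your treatment of the intrinsic contact structure is exactly the paper's argument: both $\Gamma$ and $\Gamma'$ arise as contact anti-surgery on $(\Gamma^{+},\xi_{\Gamma^{+}})$ along the same Legendrian sphere $\partial\Unstable_{n}=\partial\Unstable_{n}'$ with the same framing, so they are contactomorphic.

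For the formal isotopy, your outline is in the right direction and you have correctly located the issue, but what you flag as ``the main obstacle'' is in fact the entire content of the proof, and your proposal does not yet contain a mechanism for resolving it. The difficulty is that a formal Legendrian disk is not a submanifold with Lagrangian tangent spaces, so there is no direct way to ``feed it into'' the Weinstein handle construction: the handle of \S\ref{Sec:HandleAbstract} is built around an honest Lagrangian unstable disk, and it is not clear what object replaces $\partial^{-}H_{n}$ when the core is only formally Legendrian.

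The paper (following \cite[Lemma 3.4]{CasalsEtnyre}) resolves this not by formalizing the surgery, but by deforming the ambient almost contact structure. Concretely: from the formal Legendrian isotopy $(\phi_{T},\Phi_{S,T})$ one builds a two-parameter family of bundle automorphisms $A_{S,T}$ of $TM$, supported in the cusp chart, with $A_{1,T}T\phi_{T}(T\disk^{n})=\Phi_{1,T}(T\disk^{n})$. Pulling back by $A_{1,T}$ gives a $T$-family of formal contact structures $(\xi_{T},\omega_{T})$ on $M$, homotopic to $(\xi,d\alpha)$, for which each smooth disk $\phi_{T}\disk^{n}$ is \emph{genuinely} Legendrian. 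One then builds a $T$-family of handles $H_{n,T}$ as exponential images of the $\omega_{T}$-symplectic normal bundle to $\phi_{T}\disk^{n}$; for thin enough handles the boundaries $\partial^{-}H_{n,T}$ are almost contact for $(\xi_{T},\omega_{T})$. Concatenating the resulting bundle homotopies with $A_{S,T}^{-1}$ produces the required formal contact isotopy into $(M,\xi,d\alpha)$. This ``deform the ambient structure so the formal isotopy becomes genuine, then do honest geometry'' is the missing idea in your sketch.
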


\begin{proof}
That $\divSet$ and $\divSet'$ have the same intrinsic contact structures is clear. They are both the results of the same contact anti-surgery along the Legendrian sphere $\partial \Unstable_{n} = \partial \Unstable_{n}' \subset \divSet^{+}$ with the same framing.

Regarding contact isotopy, we follow the proof of \cite[Lemma 3.4]{CasalsEtnyre} exactly, including details for completeness of exposition. The strategy is as follows: Since the Legendrian disk $\Unstable_{n}$ and its stabilization $\Unstable_{n}'$ are formally Legendrian isotopic within the cusp chart, we can get the desired result by upgrading a formal Legendrian isotopy to an isotopy of $H_{n}$ within the cusp chart which restricts to a formal contact isotopy along $\partial^{-}H_{n}$.

First we set some notation. Let $(\phi_{T}, \Phi_{S, T})$ be a formal Legendrian isotopy with domain $\disk^{n}$ for which
\be
\item $\phi_{0}(\disk^{n})=\Unstable_{n}$ with $\Phi_{S, 0}$ constant in $S$,
\item $\phi_{1}(\disk^{n})=\Unstable_{n}'$ with $\Phi_{S, 1}$ constant in $S$, and
\item $\phi_{T}$ and $\Phi_{S, T}$ are constant outside of the cusp chart $\disk^{2n+1}$.
\ee
Let $A_{S, T}$ be a two-parameter family of automorphisms of $TM$ for which
\begin{equation*}
A_{0, T}=\Id_{TM}, \quad A_{S, T}T\phi_{T}(T\disk^{n}) = \Phi_{S, T}(T\disk^{n}).
\end{equation*}
Then we have a $T\in [0, 1]$ family of formal contact structures
\begin{equation*}
(\xi_{T}, \omega_{T}) = \left(A_{1, 1}^{-1}\xi, d\alpha(A_{1, T}^{-1}\ast, A_{1, T}^{-1}\ast)\right)
\end{equation*}
for which each $\phi_{T}\disk^{n}$ is Legendrian with respect to $(\xi_{T}, \omega_{T})$. Consideration of the two-parameter family $\left(A_{S, 1}^{-1}\xi, d\alpha(A_{S, T}^{-1}\ast, A_{S, T}^{-1}\ast)\right)$ makes it apparent that $(\xi_{T}, \omega_{T})$ is homotopic through formal contact structures to the constant family $(\xi_{T}, \omega_{T})=(\xi, d\alpha)$. We require that $A_{S, T}$ differs from $\Id_{TM}$ only within the cusp chart, so that $(\xi_{T}, \omega_{T})$ is constant outside of the cusp chart with $(\xi_{0}, \omega_{0}) = (\xi_{1}, \omega_{1})=(\xi, d\alpha)$. To obtain the desired result it suffices to construct a $T$-family of embeddings $\psi_{T}\divSet$ which are almost contact with respect the $(\xi_{T}, \omega_{T})$.

Let $g_{T}$ be a $1$-parameter family of Riemannian metrics and write $E_{T} \subset \xi_{T}$ for the $g_{T}$ orthogonal complement of $T\phi_{S}\disk^{n}$. Also write $E_{S}(\epsilon) \subset E$ for the radius $\epsilon$ disk bundle. We assume that the $\exp_{\phi_{T}}(E_{T}(\epsilon^{\Stable}))$ agrees with the ``thin part'' of $H_{n}$ when $T=0$ and agrees with the thin part of $H_{n}'$ when $T=1$. The ``thin part'' is $\{ \norm{v_{\Stable}} \leq \epsilon^{\Stable}\}$ with $\epsilon^{\Stable} \ll \epsilon^{\Unstable}$ following the notation of \S \ref{Sec:HandleAbstract} and $\exp_{\phi_{T}}$ is defined using $g_{T}$.

Then we define $H_{n, T}$ by applying $\exp_{\phi_{T}}$ to $E_{T}(\epsilon)$. For $\epsilon=\epsilon^{\Stable}$ sufficiently small, the $\omega_{T}$ will be fiber-wise symplectic on the tangent spaces of $H_{n, T}$ for all $T$, as they are symplectic along the $0$-section $\phi_{T}\Stable_{n} \subset H_{n, T}$ of the thin part. So $H_{n, 0} = H_{n}$ and $H_{n, 1} = H_{n}'$, giving the stabilized handle. So we define $\psi_{T}\divSet$ to be the negative boundary $\partial^{-}H_{n, T}$ obtained by applying $\exp_{\phi_{T}}$ to the unit sphere bundle in $E_{T}$. For the almost contact structure on $\psi_{T}\divSet$ we can take
\begin{equation*}
\xi_{\phi_{T}\Gamma} = \{ v \in \psi_{T}\divSet\ :\ \omega_{T}(v, \eta_{T})\} \subset T\Gamma
\end{equation*}
where $\eta_{T}$ is a vector field defined along the boundary of $H_{n, T}$ which points inward with $\eta_{T}$ agreeing with the Liouville vector field of the handle when $T=0, 1$. Observe that $\xi_{\phi_{T}\Gamma}$ agrees exactly with $\xi_{\Gamma}$ away from the thin part of the handle. The two form $\omega_{T}$ is symplectic on $\xi_{\phi_{T}\Gamma}$ and this subspace can be assumed to be arbitrarily close to $\xi_{T} \cap T\phi_{T}\divSet$ by making $\epsilon^{\Stable}$ very small. Hence there are $\Phi_{S, T}$ for which the $\psi_{T}\divSet$ give almost contact embeddings into $M$ equipped with the almost contact structures $(\xi_{T}, \omega_{T})$. To complete the proof concatenate the fiber-wise $\Psi_{S, T}$ homotopies in the variable $S$ with the $A_{S, T}^{-1}$, to obtain almost contact embeddings into $\Mxi$.
\end{proof}

\begin{lemma}\label{Lemma:}
Suppose that the complement $(M \setminus \divSet, \xi)$ of $\divSet$ is overtwisted. Then $\divSet$ is a stabilization.
\end{lemma}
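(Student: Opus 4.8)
The plan is to reduce the statement to the parametric $h$-principle of Cardona and Presas \cite[Corollary 3.5]{CP:TransverseHprinciple} for contact divisors with overtwisted complement. Concretely, I would first build an explicit stabilization $\divSet'$ of $\divSet$, then argue that $\divSet$ and $\divSet'$ are genuinely contact isotopic because they are formally isotopic and both have overtwisted complements, and finally transport the property of ``being a stabilization'' from $\divSet'$ back to $\divSet$ along the isotopy.

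For the construction, I would use the \emph{standard stabilization} of \S\ref{Sec:StandardStabilization}. Since $\divSet$ is closed and $(M \setminus \divSet, \xi)$ is overtwisted, I can choose a Darboux ball $\disk^{2n+1} \subset M \setminus \divSet$ disjoint from some overtwisted disk (plastikstufe) witnessing the overtwistedness of the complement, and perform the standard stabilization of $\divSet$ inside this ball, producing $\divSet'$. By the preceding theorem, $\divSet'$ is formally contact isotopic to $\divSet$ and has an isomorphic intrinsic contact structure $\xi_{\divSet'} \cong \xiDivSet$. Because the modification, and the formal isotopy realizing it, are supported in the small ball and the cusp chart, both of which are disjoint from the overtwisted disk, that disk survives throughout the family, so $(M \setminus \divSet', \xi)$ is again overtwisted and remains so along the whole formal isotopy.

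Now I would invoke \cite[Corollary 3.5]{CP:TransverseHprinciple}: genuine $\codim = 2$ contact divisors with overtwisted complement satisfy an $h$-principle, so the formal contact isotopy between $\divSet$ and $\divSet'$ upgrades to a genuine contact isotopy with $\psi_1(\divSet) = \divSet'$. Extending this to an ambient contact isotopy of $\Mxi$ by the isotopy extension theorem, I apply the resulting global contactomorphism $\psi_1$ to the anti-surgery data $(H'_{n}, \beta'_{n}, \divSet^{+})$ that exhibits $\divSet'$ as a stabilization. The pullback $(\psi_1^{-1}(H'_{n}), \psi_1^{*}\beta'_{n}, \psi_1^{-1}(\divSet^{+}))$ is again ambient Weinstein anti-surgery data whose handle is stabilized in a cusp chart disjoint from $\psi_1^{-1}(\divSet^{+})$, since a contactomorphism carries Weinstein handles, cusp charts, and Legendrian stabilizations to the same, and it produces $\divSet$. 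Hence $\divSet$ is itself a stabilization in the sense of \S\ref{Sec:StabilizationDef}.

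The main obstacle is matching the precise hypotheses of \cite[Corollary 3.5]{CP:TransverseHprinciple} to what the preceding theorem supplies: I must confirm that the formal contact isotopy is compatible with an overtwisted complement along the entire family and respects the intrinsic contact structure, so that the $h$-principle yields a genuine isotopy rather than merely a formal one. A secondary point to record carefully is the contact-isotopy invariance of ``being a stabilization'' used in the last paragraph, namely that pushing forward stabilized anti-surgery data by a global contactomorphism again satisfies the definition; this is routine but is exactly what converts ``$\divSet$ is isotopic to a stabilization'' into the desired conclusion ``$\divSet$ is a stabilization.''
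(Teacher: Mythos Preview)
Your proposal is correct and follows essentially the same approach as the paper: the paper's proof is simply the one-line statement ``this is immediate from \cite{CP:TransverseHprinciple}'', relying on the remark in the introduction that a divisor with overtwisted complement is contact isotopic to each of its stabilizations. You have faithfully unpacked that remark, including the two points the paper leaves implicit (that the formal isotopy can be arranged to keep the complement overtwisted throughout, and that ``being a stabilization'' is invariant under ambient contactomorphism).
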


As mentioned in the introduction, this is immediate from \cite{CP:TransverseHprinciple}. For results on low-dimensional overtwisted complements, see for example, the foundational \cite{Etnyre:OTComplement} or the more recent \cite{CEMM}.

\subsection{Contact anti-stabilization}\label{Sec:AntiStabilization}

Using the language of ambient handle attachments, the contact divisors of \cite{CasalsEtnyre} are obtained by applying Legendrian stabilization to the \emph{stable} manifolds of critical index Weinstein handles. Meanwhile, in our definition of contact stabilization we are performing Legendrian stabilizations along the \emph{unstable} manifolds of critical index handles. We describe more specifically how the construction of \cite{CasalsEtnyre} can be applied to general contact divisors via an \emph{anti-stabilization} procedure.

\begin{figure}[h]
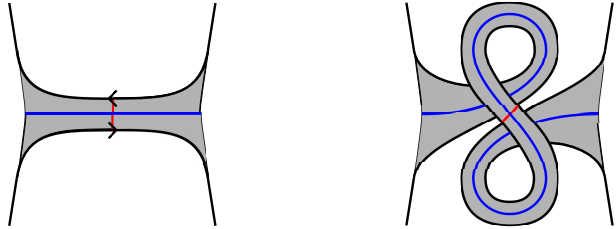

\begin{overpic}[scale=.4]{stable_antitwist.eps}
\end{overpic}
\caption{A cartoon representation of contact anti-stabilization, following Figure \ref{Fig:StableTwist}. On the left, we have a contact divisor with an ambient Weinstein handle of critical index attached. On the right, the anti-stabilized divisor is obtained by applying a Legendrian stabilization to the stable manifold of the handle.}
\label{Fig:StableAntiTwist}
\end{figure}

Let $\divSet \subset \Mxi$ be a contact divisor.
\be
\item Find a critical index Weinstein handle $(H_{n}, \beta_{n}) \subset \Mxi$ for which $(H_{n}, \beta_{n}, \divSet)$ is Weinstein anti-surgery data. Write $\divSet^{-}$ for the contact divisor which is the result of the Weinstein anti-surgery.
\item Take a cusp chart for $\Stable_{n} \subset H_{n}$ which is disjoint from $\divSet^{-}$. Perform a Legendrian stabilization within the cusp chart to obtain a new Legendrian disk $\Stable_{n}'$ which agrees with $\Stable_{n}$ outside of the cusp chart.
\item Let $(H'_{n}, \beta'_{n}) \subset \Mxi$ be an ambient Weinstein handle whose stable manifold if $\Stable'_{n}$ and which agrees with $(H_{n}, \beta_{n})$ outside of the cusp/stabilization chart. To ensure that $H_{n}$ and $H'_{n}$ agree outside of the chart, we may shrink the $\epsilon^{\Unstable}$ parameter.
\ee
Finally, write $\divSet'$ for the result of the Weinstein surgery of $\divSet^{-}$ along $(H_{n}', \beta_{n}')$. See Figure \ref{Fig:StableAntiTwist}.

\begin{defn}
In the above notation $\divSet'$ is the \emph{anti-stabilization} of $\divSet$ along $\Stable_{n}$.
\end{defn}

The following result for anti-stabilization follows from the same proof techniques used for the analogous results for stabilization.

\begin{thm}
If $\divSet' \subset \Mxi$ is an anti-stabilization of some $\divSet \subset \Mxi$, then the divisors have the same implicit contact structures and are formally contact isotopic. If $\divSet$ is any contact divisor for which $(M \setminus \divSet, \xi)$ is overtwisted, then $\divSet$ is an anti-stabilization.
\end{thm}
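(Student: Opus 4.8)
The plan is to exploit the duality between the stabilization and anti-stabilization constructions: passing from one to the other swaps the stable and unstable manifolds of the critical-index handle $(H_{n},\beta_{n})$ and exchanges surgery with anti-surgery. Since for a critical-index ($k=n$) handle both $\Stable_{n}$ and $\Unstable_{n}$ are Legendrian $n$-disks, every ingredient of the stabilization proofs transports under this swap. I would therefore organize the argument into the same two parts as in the stabilization case: the statement about intrinsic contact structures and formal contact isotopy, and the statement about overtwisted complements.

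For the first part, the equality of intrinsic contact structures is immediate. Both $\divSet$ and its anti-stabilization $\divSet'$ are obtained by contact surgery on $\divSet^{-}$ along the Legendrian sphere $\partial\Stable_{n}=\partial\Stable_{n}'\subset\divSet^{-}$ with the same framing, since the Legendrian stabilization altering $\Stable_{n}$ into $\Stable_{n}'$ is carried out in a cusp chart disjoint from $\divSet^{-}$ and hence leaves the boundary sphere unchanged. For the formal contact isotopy I would mirror \cite[Lemma 3.4]{CasalsEtnyre} with the roles of $\Stable_{n}$ and $\Unstable_{n}$ interchanged. By \cite{EES:Nonisotopic} the disks $\Stable_{n}$ and $\Stable_{n}'$ are formally Legendrian isotopic within the cusp chart; let $(\phi_{T},\Phi_{S,T})$ be such an isotopy with $\phi_{0}(\disk^{n})=\Stable_{n}$ and $\phi_{1}(\disk^{n})=\Stable_{n}'$, constant outside the chart. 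Choosing automorphisms $A_{S,T}$ of $TM$ with $A_{0,T}=\Id_{TM}$ and $A_{S,T}T\phi_{T}(T\disk^{n})=\Phi_{S,T}(T\disk^{n})$ produces a family of formal contact structures $(\xi_{T},\omega_{T})$, constant outside the chart and homotopic to $(\xi,d\alpha)$, for which each $\phi_{T}\disk^{n}$ is Legendrian. I would then build the interpolating handles $H_{n,T}=\exp_{\phi_{T}}(E_{T}(\epsilon))$, where $E_{T}\subset\xi_{T}$ is the metric-orthogonal complement of $T\phi_{T}\disk^{n}$ and now $\epsilon=\epsilon^{\Unstable}\ll\epsilon^{\Stable}$ so that the handle is thin in the unstable direction, and define the moving divisor $\psi_{T}\divSet=\partial^{+}H_{n,T}$ as the image of the unit sphere bundle of $E_{T}$. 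Exactly as before, $\omega_{T}$ is fiber-wise symplectic on the tangent spaces of $H_{n,T}$ for $\epsilon^{\Unstable}$ small, the hyperplane field $\omega_{T}$-orthogonal to an outward field $\eta_{T}$ (agreeing with $X_{\beta_{n}}$ when $T=0,1$) is an almost contact structure on $\psi_{T}\divSet$ close to $\xi_{T}\cap T\phi_{T}\divSet$, and concatenating the fiber-wise homotopies with the $A_{S,T}^{-1}$ produces the desired formal contact isotopy from $\divSet=\partial^{+}H_{n,0}$ to $\divSet'=\partial^{+}H_{n,1}$.

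For the second part I would mirror the stabilization lemma, which is immediate from \cite{CP:TransverseHprinciple}. Given $\divSet$ with $(M\setminus\divSet,\xi)$ overtwisted, I would produce a concrete anti-stabilization $\divSet'$ of $\divSet$ whose complement remains overtwisted, arranging the anti-surgery data and cusp chart so that the local modification does not fill in the overtwisted region, and invoke the first part to see that $\divSet'$ is formally contact isotopic to $\divSet$. The parametric $h$-principle for contact divisors with overtwisted complements then upgrades this formal isotopy to a genuine contact isotopy; transporting the anti-surgery data along this isotopy exhibits $\divSet$ itself as an anti-stabilization.

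The main obstacle I expect is not a new idea but the verification that the stable/unstable symmetry is genuinely complete in the formal-isotopy step: one must confirm that building the handle over the stable disk rather than the unstable disk, thickening in the unstable direction, and moving the convex boundary $\partial^{+}H_{n,T}$ with the outward field $\eta_{T}$ produces honest fiber-wise injective, conformally symplectic bundle maps $\Phi_{S,T}$ at every stage. A secondary point requiring care is confirming in the second part that the chosen anti-stabilization really does have an overtwisted complement, so that \cite{CP:TransverseHprinciple} applies.
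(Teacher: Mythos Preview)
Your proposal is correct and takes essentially the same approach as the paper: the paper states only that ``the following result for anti-stabilization follows from the same proof techniques used for the analogous results for stabilization,'' and your plan---swapping $\Stable_{n}\leftrightarrow\Unstable_{n}$, surgery $\leftrightarrow$ anti-surgery, $\partial^{-}H_{n,T}\leftrightarrow\partial^{+}H_{n,T}$, inward $\leftrightarrow$ outward $\eta_{T}$, and $\epsilon^{\Stable}\ll\epsilon^{\Unstable}\leftrightarrow\epsilon^{\Unstable}\ll\epsilon^{\Stable}$---is exactly that. The obstacles you flag are genuine but routine, and the second part via \cite{CP:TransverseHprinciple} is handled just as in the stabilization lemma.
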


For the main examples of \cite{CasalsEtnyre}, one starts with $\divSet$ equal to a contact push-off of the standard Legendrian unknot $\Leg_{U}$, takes $(H_{n}, \beta_{n})$ to be the index $n$ Weinstein handle of the Liouville hypersurface $(\disk^{\ast}\sphere^{n}, \beta_{can})$ which it bounds, and then performs an anti-stabilization along its stable manifold. The resulting $\divSet'$ is the contact push-off of a Legendrian stabilization of $\Leg_{U}$. So in contrast with Theorem \ref{Thm:HypersurfaceNonSimple} concerning stabilized contact divisors, anti-stabilized divisors may bound Liouville hypersurfaces. Meanwhile the following will be clear from the proof of Theorem \ref{Thm:OTDivisor} in \S \ref{Sec:OTDivisor}.

\begin{thm}
Suppose that $\divSet \subset \Mxi$ is a contact divisor for which $\dim M \geq 7$ and $\MxiDivSet$ is overtwisted. Then $\divSet$ is an anti-stabilization.
\end{thm}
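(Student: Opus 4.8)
The plan is to reproduce the proof of Theorem~\ref{Thm:OTDivisor} from \S\ref{Sec:OTDivisor} with the roles of the stable and unstable manifolds -- equivalently, of surgery and anti-surgery -- interchanged. Unwinding the definition, $\divSet$ is an anti-stabilization as soon as there is a divisor $\divSet^{-}$ and a critical-index ambient handle $(H_{n}, \beta_{n})$ with \emph{stabilized stable manifold} $\Stable_{n}$ for which $(H_{n}, \beta_{n}, \divSet^{-})$ is Weinstein surgery data whose output is ambiently contact isotopic to $\divSet$. Since stabilizing the disk $\Stable_{n}$ in a cusp chart disjoint from $\divSet^{-}$ leaves the attaching sphere $\partial \Stable_{n} \subset \divSet^{-}$, and hence the intrinsic contact surgery, untouched, the content is entirely ambient: I must show the stabilized and unstabilized handles yield ambiently isotopic divisors. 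By the ambient Weinstein surgery cobordism construction of \S\ref{Sec:divisorCobordism}, it is natural to phrase this through the associated Weinstein divisor cobordism $\Wdivisor \subset [-1,1] \times M$, a $\dim = 2n$ Weinstein cobordism carrying a single index-$n$ critical point, with concave boundary $\divSet^{-}$ and convex boundary $\divSet$.

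First I would fix the concave end. Choosing any Legendrian disk in a Darboux chart of $\Mxi$ whose boundary is a Legendrian sphere in $\MxiDivSet$ gives ambient anti-surgery data; let $\divSet^{-}$ be the resulting divisor. As $\xiDivSet$ is overtwisted, Lemma~\ref{Lemma:PlusOneSurgeryPreservesOT} shows $(\divSet^{-}, \xi_{\divSet^{-}})$ is overtwisted as well. The hypothesis $\dim M \geq 7$ forces $\dim \divSet^{-} = \dim \divSet \geq 5$, so its Legendrians have dimension $\geq 2$ and Theorem~\ref{Thm:CMP} together with Murphy's theory \cite{Murphy:Loose} applies: the attaching sphere $\partial \Stable_{n} \subset \divSet^{-}$ can be arranged to be loose, making $\Wdivisor$ a flexible Weinstein cobordism.

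The heart of the argument, where Eliashberg's \cite{Eliash:WeinsteinRevisited} enters, is the flexible $h$-principle. Because $\xiDivSet$ is overtwisted its formal homotopy class is unobstructed by Borman--Eliashberg--Murphy \cite{BEM:Overtwisted}, so a flexible $\Wdivisor$ with concave boundary $(\divSet^{-}, \xi_{\divSet^{-}})$ and convex boundary $\MxiDivSet$ exists by the existence part of the flexible Weinstein $h$-principle (cf. \cite{SteinToWeinstein}); Lemma~\ref{Lemma:SubcriticalPreservesOT} records the consistency of this with overtwistedness. Stabilizing the stable disk to $\Stable_{n}'$ leaves the loose attaching sphere unchanged, so the resulting $\Wdivisor'$ is again flexible and is formally homotopic to $\Wdivisor$ as a Weinstein cobordism (Legendrian stabilization preserves formal Legendrian data, cf. \S\ref{Sec:LegStabilization}). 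The uniqueness part of the flexible $h$-principle then supplies a genuine Weinstein homotopy from $\Wdivisor$ to $\Wdivisor'$, and transporting it through the ambient surgery cobordism construction should identify $\divSet$ with the output of the stabilized surgery, realizing $\divSet$ as an anti-stabilization.

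The step I expect to be the main obstacle is this last ambient coupling. The flexibility and uniqueness $h$-principles are statements about $\Wdivisor$ as an abstract Weinstein cobordism, whereas I need everything -- the cobordism, its Weinstein homotopy, and especially the identification of the convex boundary with the \emph{given} $\divSet \subset M$ up to \emph{ambient} contact isotopy -- to take place inside $[-1,1] \times M$. Performing the stabilization explicitly in a cusp chart, i.e. an honest Darboux ball of $M$ disjoint from $\divSet^{-}$, keeps that modification local and ambient, but promoting the intrinsic Weinstein homotopy to an isotopy of divisors in $M$, while controlling it relative to the fixed attaching sphere, is the delicate point; this is exactly where overtwistedness of $\xiDivSet$ (propagated to $\divSet^{-}$) is indispensable. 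Finally, unlike the stabilization setting governed by Theorem~\ref{Thm:HypersurfaceNonSimple}, anti-stabilized divisors are permitted to bound Liouville hypersurfaces, so no filling obstruction can interfere with the construction.
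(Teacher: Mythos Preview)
Your unwinding of the definition is correct, and your first paragraph gets you to the right starting point: produce $\divSet^{-}$ by an ambient anti-surgery on $\divSet$, observe it is overtwisted, and then exhibit a handle with stabilized stable disk whose surgery on $\divSet^{-}$ returns $\divSet$. But from that point on you take a substantial detour. You try to build \emph{two} handles---the original $H_{n}$ and a stabilized $H_{n}'$---and then argue via the flexible Weinstein $h$-principle that the associated cobordisms are abstractly Weinstein homotopic, hoping to promote this to an ambient identification of their convex boundaries. You yourself flag the gap: the uniqueness $h$-principle of \cite{SteinToWeinstein} controls $\Wdivisor$ only as an abstract Weinstein cobordism, and there is no mechanism in your argument for turning that into an isotopy of \emph{embedded} divisors in $M$ relative to $\divSet^{-}$. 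That step is not a technicality; it is the whole content, and the flexible $h$-principle does not provide it.

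The paper's proof avoids this entirely. The key is the lemma at the start of \S\ref{Sec:OTDivisor}, which adapts \cite[Lemma 4.4]{Eliash:WeinsteinRevisited}: if $\Leg \subset \Mxi$ is a Legendrian disk with $\partial\Leg = \Leg\cap\divSet^{-}$ a loose Legendrian sphere in $(\divSet^{-},\xi_{\divSet^{-}})$, then $\Leg$ is already Legendrian stabilized in $M\setminus\divSet^{-}$. Applying this to the \emph{stable} disk $\Stable_{n}$ of your original handle $H_{n}$---whose boundary $\partial\Stable_{n}\subset\divSet^{-}$ is loose because $(\divSet^{-},\xi_{\divSet^{-}})$ is overtwisted and $\dim\divSet^{-}\geq 5$---you learn that $\Stable_{n}$ itself is stabilized in the complement of $\divSet^{-}$. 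No second handle, no comparison, no $h$-principle: the handle you started with is already of the required form, and surgery along it recovers $\divSet$ on the nose since surgery undoes anti-surgery. This is what the paper means by ``an identical method of proof'' and is where \cite{Eliash:WeinsteinRevisited} actually enters.
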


\subsection{High-dimensional stabilization applied to transverse links}\label{Sec:HighDimLowDim}

Now we apply our definition of high-dimensional contact stabilization to transverse links. The content of this subsection will not be used elsewhere in the article and is included to demonstrate that our high-dimensional stabilization does not agree with the usual notion of stabilization for transverse links. The procedure follows Figure \ref{Fig:LowDimStabilization}, building on the description of $\ind=1$ ambient Weinstein handle attachment for transverse links in \S \ref{Sec:LowDimHandles}.

Suppose we have a Legendrian arc $\Stable$ with boundary on a transverse link $\divSet$ as shown in the first subfigure. We take this to be our stable manifold for an $\ind=1$ ambient Weinstein handle. After the surgery is applied, we obtain a transverse link as described in the second subfigure attached to which is the unstable manifold $\Unstable$ of the handle. In the third subfigure a Legendrian stabilization is performed to obtain a $\Unstable'$. In the fourth subfigure a Legendrian isotopy is applied to the union of the transverse link and $\Unstable'$. The isotopy shrinks $\Unstable'$, replacing the portion of $\Unstable'$ with the portion containing the zigzag with the \emph{ribbon} of the arc, following the language of \cite{Avdek:ContactSurgery}. The same article provides a detailed description of the resulting transverse link in the front projection. In the fourth subfigure, we undo the ambient Weinstein handle attachment by adding a negative crossing to the diagram, following the second row of Figure \ref{Fig:LowDimHandles}. In the last subfigure, a transverse isotopy has been applied to simplify the front diagram.

\begin{figure}[h]
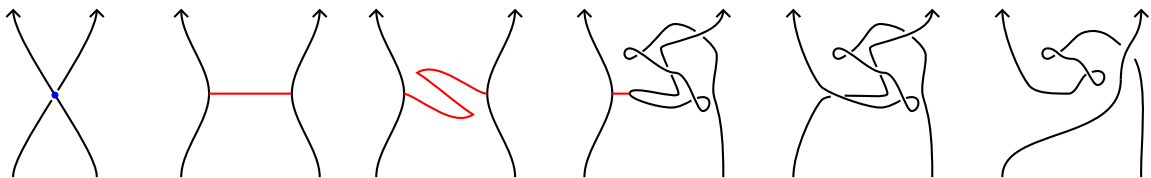

\begin{overpic}[scale=.35]{low_dim_stabilization.eps}
\end{overpic}
\caption{High-dimensional stabilization applied to a transverse link in five steps.}
\label{Fig:LowDimStabilization}
\end{figure}

In the end we see that the following changes have occurred:
\be
\item A crossing change has been applied so that what was previously the under-crossing arc (south-west to north-east) is now the over-crossing arc.
\item What was originally the over-crossing arc (south-east to north-west) has now been transversely stabilized in the usual sense \cite{Etnyre:KnotNotes}.
\ee
Now let's suppose for simplicity that we are working in $\Rthree$ so that the picture extends to a full front projection. If the two strands lie on distinct components of $\divSet$, the first observation tells us that the stabilization modifies the linking numbers of the components so the topological type of the link has been modified. Otherwise we can assume that $\divSet$ is connected and see that the operation has stabilized $\divSet$ while maintaining the self-linking number (which can be computed as the writhe of the front).

If the Legendrian arc $\Unstable$ is oriented left to right, then we have applied a negative Legendrian stabilization to $\Unstable$ in the third subfigure of Figure \ref{Fig:LowDimStabilization}. If we instead perform a positive Legendrian stabilization, then the result of the high-dimensional contact stabilization will also produce a crossing change, but leave the south-east to north-west strand intact while stabilizing the south-west to north-east strand. If we perform both a positive and negative Legendrian stabilization to $\Unstable$, then we will see a crossing change and both stands of $\divSet$ transversely stabilized. The details of the alternate stabilization operations (as well as the anti-stabilization) can be worked out diagrammatically as in Figure \ref{Fig:LowDimStabilization}, using the previously-mentioned techniques of \cite{Avdek:ContactSurgery}.

\section{Normal bundles of divisors and contact fiber sums}\label{Sec:NormalAndFiberSum}

Here we collect some results on framings of contact divisors, how these framings interact with ambient surgeries, and contact fiber sums. These concepts will be synthesized in \S \ref{Sec:FiberSum}.

\subsection{Framings of contact divisors}\label{Sec:Framing}

The first Chern class of the normal bundle $\eta$ of a contact divisor $\divSet \subset \Mxi$ can be computed using the adjunction and self-intersection formulas
\begin{equation*}
c_{1}(\eta) = c_{1}(\xi) - c_{1}(\xiDivSet) = i^{\ast}\PD_{M}([\divSet]) \in H^{2}(\divSet, \Z).
\end{equation*}
Here $\PD_{M}: H_{2n-1}(M) \rightarrow H^{2}(M)$ and $i^{\ast}:H^{2}(M) \rightarrow H^{2}(\divSet)$ is the pullback of the inclusion. The first equality (adjunction) is derived from the more general adjunction formula $c(\xi|_{\divSet}) = c(\eta)c(\xiDivSet)$ for total Chern classes. The second (self-intersection) is simply the fact that $c_{1}(\eta)$ is the Euler class.

A \emph{framing} of a contact divisor is a nowhere vanishing section of its normal bundle. The normal bundle admits a framing iff $c_{1}(\eta)$ vanishes. When framings exist, their homotopy classes are an affine space over $H^{1}(\divSet, \Z) \simeq [\divSet, \Circle]$. It follows that divisors which are spheres -- such as the unknot -- admit unique framings when $\dim \divSet \geq 3$.

We designate a special class of framings. If $\divSet$ is a link bounding an oriented, embedded hypersurface $S$, then the intersection of $S$ with the boundary of a small neighborhood of $\divSet$ in $M$ gives a framing of $\divSet$. We say that such a framing is a \emph{Seifert framing} and denote it by $\framing_{S}$. This coincides with the classical notion of Seifert framings of knots in $\sphere^{3}$.

\subsection{Framings and ambient surgeries}

Provided a framing $\framing$ of a divisor $\divSet \subset \Mxi$, the relevant Weinstein neighborhood theorem states that the divisor admits a neighborhood of the form
\begin{equation*}
N_{\Gamma} = \disk \times \Gamma, \quad \xi=\ker \alpha, \quad \alpha = r^{2}d\theta + e^{r^{2}}\alpha_{\Gamma}, \quad \framing = \partial_{x}
\end{equation*}
where $\alpha_{\Gamma}$ is a contact form for $\MxiDivSet$ and $\disk$ a $\dim=2$ disk of sufficiently small radius. In the above we use both polar $(r, \theta)$ and Cartesian $(x, y)$ coordinates on $\disk$.

Within such neighborhood, we have a Liouville embedding of the symplectization $([C_{0}, C_{1}]\times \divSet, e^{s}\alpha_{\Gamma})$ of $\MxiDivSet$ into $\Mxi$ defined locally by
\begin{equation*}
(s, \gamma) \mapsto (r=\sqrt{s}, \theta=0, \gamma), \quad \gamma \in \Gamma
\end{equation*}
for $0 < C_{0} < C_{1}$ small. We'll say that such a Weinstein hypersurfaces is a \emph{ribbon} of $(\divSet, \framing)$, whose convex and concave boundaries are both contact isotopic to $\divSet \subset \Mxi$. To extend this to an embedding of the form $[-\epsilon, \epsilon]_{t}$ times the ribbon, flow in the Reeb direction for time $t$. Ribbons will be denoted $\ribbon_{\divSet, \framing}$ and we will allow ourselves to identify either its concave or convex boundary with $\divSet$, depending on the context at hand.

\begin{lemma}\label{Lemma:HandleLiouvilleExtension}
Let $(\divSet, \framing)$ be a framed divisor in a $\dim=2n+1$ contact manifold $\Mxi$ for which $n \neq 2$ or $\framing$ is a Seifert framing. Let $(H_{n}, \beta_{n})$ be an ambient Weinstein handle for which $(H_{n}, \beta_{n}, \divSet)$ is Weinstein surgery data. Then there is a ribbon $\ribbon$ for some $(\divSet, \framing')$ for which
\be
\item $\framing'$ homotopic to $\framing$,
\item $\divSet$ is the convex boundary of $\ribbon$, and
\item the union of $\ribbon$ with $H_{n}$ is a Weinstein hypersurface.
\ee
Likewise if $(H_{n}, \beta_{n}, \divSet)$ is Weinstein anti-surgery data, then there is a ribbon $\ribbon$ for some $(\divSet, \framing')$ for which
\be
\item $\framing'$ is homotopic to $\framing$,
\item $\divSet$ is the concave boundary of $\ribbon$, and
\item the union of $\ribbon$ with $H_{n}$ is a Weinstein hypersurface.
\ee
\end{lemma}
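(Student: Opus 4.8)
The plan is to realize the abstract handle attachment as a genuine Weinstein handle attachment onto a ribbon of $\divSet$, thereby reducing the statement to a framing-matching problem governed by a single cohomological obstruction. I treat the surgery case; the anti-surgery case follows by turning the Weinstein structure upside down (replacing the Lyapunov function $F_{n}$ by $-F_{n}$ and interchanging the roles of $\Stable_{n}$ and $\Unstable_{n}$), so that the attaching sphere $\partial \Stable_{n}$ is replaced by $\partial \Unstable_{n}$, which also has dimension $n-1$, and the argument below applies with obvious modifications.

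First I would establish a local normal form near the attaching region. Since $(H_{n}, \beta_{n}, \divSet)$ is surgery data, that region is $\partial^{-}H_{n} = H_{n} \cap \divSet$, a disk-neighborhood of the sphere $\partial \Stable_{n} \cong \sphere^{n-1}$, which is Legendrian in $\MxiDivSet$ (its dimension $n-1$ is $\half(\dim \divSet - 1) = \half(2n-2)$). Combining the Weinstein neighborhood theorem for the framed divisor $(\divSet, \framing)$ of \S\ref{Sec:Framing} with the standard neighborhood of the Legendrian $\partial\Stable_{n}$ inside $\divSet$, I would produce a model neighborhood of $\partial^{-}H_{n}$ in $\Mxi$ in which the divisor, a ribbon collar $[C_{0}, C_{1}] \times \divSet$, and the handle $H_{n}$ all appear in standard form with matching Liouville germs. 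In this model the incoming Liouville flow of $H_{n}$ prescribes a germ of a ribbon along $\partial^{-}H_{n}$, equivalently a framing of $\eta$ there; call it $\framing_{H}$.

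The core of the proof is then to extend $\framing_{H}$ from $\partial^{-}H_{n}$ to a global framing $\framing'$ of $\divSet$ in the homotopy class of $\framing$, which is exactly what conclusion (1) and the construction of a genuine ribbon $\ribbon = \ribbon_{\divSet, \framing'}$ require. Two framings of the oriented rank-$2$ normal bundle $\eta$ (a complex line bundle, so with structure group $SO(2) \cong \Circle$) differ by a map to $\Circle$, so the discrepancy between $\framing_{H}$ and $\framing$ over the attaching region is measured by an element of $[\partial^{-}H_{n}, \Circle] = H^{1}(\partial^{-}H_{n}; \Z) = H^{1}(\sphere^{n-1}; \Z)$, which is $\Z$ when $n=2$ and vanishes otherwise (equivalently, $\pi_{n-1}(\Circle)$ is nonzero only for $n-1 = 1$). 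Hence the main obstacle is $n = 2$: the handle may prescribe a framing differing from $\framing$ by a nonzero winding around $\partial \Stable_{n} \cong \Circle$, and this winding cannot be homotoped away in general. This is precisely where the hypothesis enters. When $n \neq 2$ the obstruction group is trivial, so $\framing_{H}$ is homotopic to $\framing|_{\partial^{-}H_{n}}$ and extends across $\divSet$ in the class of $\framing$. When $n = 2$ and $\framing = \framing_{S}$ is a Seifert framing, I would use the bounding hypersurface $S$: arranging the stable disk to meet $S$ so that $S$ extends across the attaching region compatibly with the handle's ribbon germ pins the winding number to zero, identifying $\framing_{H}$ with $\framing_{S}$ up to homotopy.

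Finally, having produced $\framing'$, I would take the genuine ribbon $\ribbon = \ribbon_{\divSet, \framing'}$ with $\divSet$ as its convex boundary and glue $H_{n}$ along $\partial^{-}H_{n} \subset \divSet$. By Lemma \ref{Lemma:BetaFunctionMultiple} and the matching of Liouville germs arranged in the local model, the union $\ribbon \cup H_{n}$ carries a Liouville form admitting a Lyapunov function that agrees with the symplectization coordinate on $\ribbon$ and with $F_{n}$ on the handle, giving a Weinstein hypersurface with a single interior zero of the same index $n$ as the zero of $\beta_{n}$; this is conclusions (2) and (3). I expect the $n=2$ framing bookkeeping to be the only genuinely delicate point, the remainder being a routine application of the Weinstein neighborhood and handle-gluing machinery.
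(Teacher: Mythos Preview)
Your approach is essentially the paper's: reduce to matching the handle-induced framing along $\partial^{-}H_{n}$ with the given $\framing$, and observe that the obstruction lives in $[\sphere^{n-1},\Circle]$, which vanishes for $n\neq 2$. The construction of the ribbon and the gluing to get a Weinstein hypersurface are likewise the same.

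The one place where your argument is not yet a proof is the $n=2$ Seifert step. Saying that one can ``arrange the stable disk to meet $S$ so that $S$ extends across the attaching region compatibly with the handle's ribbon germ'' is assuming the conclusion: the handle $(H_{2},\beta_{2})$ is fixed, and the winding number of $\framing_{H}$ relative to $\framing$ along $\partial\Stable_{2}\cong\Circle$ is an integer you must \emph{compute}, not isotope away. The paper does this by identifying that integer with the intersection number $\langle \{(\epsilon,0)\}\times\divSet,\ \Stable_{2}\rangle$ in the pairing $H_{3}(M\setminus\divSet)\otimes H_{2}(M,\divSet)\to\Z$; since the $\framing$-pushoff $\{(\epsilon,0)\}\times\divSet$ is the boundary of $S\setminus(S\cap N_{\divSet})$ in $M\setminus\divSet$, the intersection number vanishes. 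You should replace your ``arranging'' sentence with this (or an equivalent) topological computation.
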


\begin{proof}
We'll only cover the Weinstein surgery case as the Weinstein anti-surgery case is identical. We want to ensure that the union of a sufficiently thin ribbon with the handle is embedded in $\Mxi$. We first consider the associated homotopical problem.

Use the framing to identify $N_{\divSet}$ with $\disk \times \divSet$ so that $\framing = \partial_{x}$ as above. The handle deformation-retracts onto its stable manifold $\Stable_{n}$ with $\partial \Stable_{n} \simeq \sphere^{n-1} \subset \divSet$. Let $\zeta=X_{\beta_{n}}|_{\partial \Stable_{n}}$, giving an inward pointing vector field for $\Stable_{n}$ along its boundary.  Using the trivialization of the normal bundle $\eta$ determined by $\framing$, $\zeta$ gives us a map $\sphere^{n-1} \rightarrow \Circle = \partial \disk$ which we'll also call $\zeta$ with homotopy class $[\zeta]$.

We claim that $[\zeta]$ is in the class of the constant map provided our hypotheses. When $n > 2$, this is trivially satisfied, so we assume that $n=2$ and $\framing$ is a Seifert framing. So $[\zeta]$ is completely determined by
\begin{equation*}
\deg = \deg(\zeta: \Circle \rightarrow \Circle) \in \Z.
\end{equation*}
We can compute this degree using the intersection pairing $H_{3}(M \setminus \divSet) \otimes H_{2}(M, \divSet) \rightarrow \Z$ as
\begin{equation*}
\deg = \langle \{ (\epsilon, 0)\} \times \divSet, \Stable_{n} \rangle.
\end{equation*}
This must be zero since the cycle $\{ (\epsilon, 0)\} \times \divSet$ is the boundary of a Seifert hypersurface $S$ minus its intersection with our neighborhood $S \cap N_{\divSet}$.

So we suppose that $[\zeta]$ is in the class of the constant map. Then can find a smooth $\framing'$ in the same homotopy class of framing as $\framing$ which agrees with $X_{\beta_{n}}$ along the attaching locus $\partial^{-}H_{n}$ of the handle. Take a ribbon $\ribbon$ of $(\divSet, \framing')$ whose convex end we identify with $\divSet$. The union of this ribbon with $H_{n}$ will then be smoothly embedded so long as $\ribbon$ is sufficiently thin, giving us the desired Weinstein hypersurface.
\end{proof}

\subsection{Push-offs of divisors and Liouville hypersurfaces}\label{Sec:Pushoffs}

Provided a contact divisor $\divSet$ with a framing $\framing$, we can define the \emph{$\framing$ push-off}, denoted $\divSet_{\framing}$, as follows: Use $\framing$ to identify $\divSet = \{0\} \times \divSet$ and $\framing = \partial_{x}$ inside of a trivialized $N_{\divSet}$ as above and define
\begin{equation*}
\divSet_{\framing} = \{(\epsilon, 0) \} \times \divSet \subset N_{\divSet} \subset M \setminus \divSet, \quad \epsilon > 0.
\end{equation*}
Following the usual Darboux-Moser-Weinstein arguments, the contact isotopy classes of $\divSet_{\framing} \subset M \setminus \divSet$ and $\divSet \sqcup \divSet_{\framing}$ depend only on the homotopy class $[\framing]$ of $\framing$.

We can also describe push-off operations for Liouville hypersurfaces $(\Wdivisor, \betaWD) \subset \Mxi$. Inside of the standard neighborhood $N_{\Wdivisor} = [-\epsilon, \epsilon]_{t} \times \Wdivisor$ within which $\xi = dt + \beta_{V}$ and $\Wdivisor$ is identified with $\{0\} \times \Wdivisor \subset N_{\Wdivisor}$, set
\begin{equation*}
\Wdivisor_{\uparrow} = \{\epsilon/2\} \times \Wdivisor.
\end{equation*}
Then $\Wdivisor_{\uparrow}$ is a Liouville hypersurface with the same implicit Liouville structures $\alpha|_{\Wdivisor_{\updownarrow}} = \beta$. If $\divSet^{\pm} = \partial^{\pm}\Wdivisor$ is one of the boundary components, then the $\divSet^{\pm}_{\uparrow} = \partial^{\pm}\Wdivisor_{\uparrow}$ are both $\framing_{\Wdivisor}$ push-offs of $\divSet^{\pm}$.

The following easy lemma will later be used in the proof of Theorem \ref{Thm:MainOT}.

\begin{lemma}\label{Lemma:DisjointUnknots}
Let $\unknot$ be the standard contact unknot inside of a $\Mxi$ of $\dim \geq 5$. Then for each $\framing$ push-off $\divSet_{U, \framing}$ of $\unknot$, there are disjoint Weinstein disks bounded by $\unknot$ and $\divSet_{U, \framing}$.
\end{lemma}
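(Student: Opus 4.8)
The plan is to reduce to a single convenient push-off using the uniqueness of framings for $\unknot$, and then to read off disjoint Weinstein disks directly from the push-off construction for Liouville hypersurfaces in \S\ref{Sec:Pushoffs}. Since $\dim M = 2n+1 \geq 5$ we have $n \geq 2$ and $\dim \unknot = 2n-1 \geq 3$. As recorded in \S\ref{Sec:Framing}, a sphere of dimension $\geq 3$ admits a unique framing up to homotopy, so any two framings $\framing$ of $\unknot$ are homotopic. By the isotopy invariance stated in \S\ref{Sec:Pushoffs}, the pair $\unknot \sqcup \divSet_{U,\framing}$ depends only on $[\framing]$ and is therefore independent of the choice of $\framing$. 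Consequently it suffices to produce disjoint Weinstein disks for one conveniently chosen push-off and then to transport them by a contact isotopy.

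For the convenient model, recall that $\unknot$ bounds the standard Weinstein disk $\Wdivisor = (\disk^{2n}, \beta_{std})$ inside its Darboux ball, with $\partial^{+}\Wdivisor = \unknot$ and $\partial^{-}\Wdivisor = \emptyset$. Working in the standard neighborhood $N_{\Wdivisor} = [-\epsilon, \epsilon]_{t}\times \Wdivisor$ with $\xi = \ker(dt + \betaWD)$, the push-off operation of \S\ref{Sec:Pushoffs} yields $\Wdivisor_{\uparrow} = \{\epsilon/2\}\times \Wdivisor$, a Liouville hypersurface carrying the same implicit Liouville form $\betaWD$; as this form is unchanged, $\Wdivisor_{\uparrow}$ is again a Weinstein disk, isomorphic to $\Wdivisor$. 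The two disks $\Wdivisor = \{0\}\times \Wdivisor$ and $\Wdivisor_{\uparrow} = \{\epsilon/2\}\times \Wdivisor$ lie in distinct $t$-level sets of $N_{\Wdivisor}$, hence are disjoint, while $\partial^{+}\Wdivisor_{\uparrow} = \divSet_{U,\framing_{\Wdivisor}}$ is exactly the $\framing_{\Wdivisor}$ push-off of $\unknot$. Thus $\Wdivisor$ and $\Wdivisor_{\uparrow}$ are disjoint Weinstein disks bounded by $\unknot$ and by $\divSet_{U,\framing_{\Wdivisor}}$, which settles the case of this particular push-off.

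Finally I would transport the configuration to an arbitrary push-off. Since $\framing_{\Wdivisor}$ is homotopic to any framing $\framing$ by uniqueness, the invariance from \S\ref{Sec:Pushoffs} supplies a contact isotopy of $M$ supported in $M \setminus \unknot$ (so fixing $\unknot$ setwise) carrying $\divSet_{U,\framing_{\Wdivisor}}$ to $\divSet_{U,\framing}$; let $\psi$ be its time-one map. Being a contactomorphism, $\psi$ takes Weinstein hypersurfaces to Weinstein hypersurfaces, the induced Liouville form being rescaled by a conformal factor which remains Weinstein by Lemma \ref{Lemma:BetaFunctionMultiple}. Therefore $\psi(\Wdivisor)$ and $\psi(\Wdivisor_{\uparrow})$ are disjoint Weinstein disks bounded by $\psi(\unknot) = \unknot$ and by $\psi(\divSet_{U,\framing_{\Wdivisor}}) = \divSet_{U,\framing}$, respectively. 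The only point needing care is this transport step -- verifying that $\psi$ keeps the images Weinstein and that the isotopy may be taken to fix $\unknot$ -- but both are immediate from the cited results, so no genuine obstacle arises; the essential input is simply the uniqueness of the framing, which collapses all push-offs into one isotopy class.
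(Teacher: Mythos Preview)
Your argument is correct and follows essentially the same route as the paper: uniqueness of the framing on a sphere of dimension $\geq 3$, invariance of the push-off under homotopy of framings, and the explicit pair $\Wdivisor, \Wdivisor_{\uparrow}$ of disjoint Weinstein disks in the standard neighborhood. The paper's proof simply stops after ``we can assume'' the push-off is $\partial \Wdivisor_{\uparrow}$, whereas you carry out the transport explicitly; your use of Lemma~\ref{Lemma:BetaFunctionMultiple} is legitimate since the Liouville condition on $e^{f}\beta$ holds automatically (the image under a contactomorphism is already a Liouville hypersurface), so the Weinstein property indeed transfers.
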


\begin{proof}
As previously mentioned, there is only one $\framing$ of the unknot up to homotopy, which must then agree with the framing induced by the standard Liouville disk $\Wdivisor \simeq \disk^{2n}$ which it bounds. Since the contact isotopy class of $\divSet_{U, \framing}$ depends only on the homotopy class of the framing, we can assume that it is the convex boundary of $\Wdivisor_{\uparrow}$.
\end{proof}

\subsection{Fiber sums of framed contact divisors}\label{Sec:FiberSumIntro}

Let $\MxiDivSet$ and $\Mxi$ be contact manifolds of $\dim=2n-1$ and $\dim=2n+1$ respectively. Suppose we have a pair of contact embeddings $\phi_{i}: \MxiDivSet \rightarrow \Mxi$ whose images are disjoint and have framings $\framing_{i}, i=0, 1$. To this data Geiges associates a \emph{contact fiber sum} \cite{Geiges:Branched} as a contact-topological analogue of Gompf's symplectic fiber sum \cite{Gompf:Sum}. We review a constructive definition, producing a contact manifold which we'll denote $(M_{\#}, \xi_{\#})$. For an intrinsic characterization of the contact fiber sum using confoliations, see Gironella's \cite{Fabio:Thesis}.

The contact fiber sum construction works by identifying tubular neighborhoods of the $\phi_{i}\divSet$ with $\disk \times \divSet$ along which $\xi$ is given by the contact form $\alpha = \alpha_{\divSet} + r^{2}d\theta$ and $\framing = \partial_{x}$ for a contact form $\alpha_{\divSet}$ for $\MxiDivSet$. The complements of the $\phi_{i}\divSet$ in their neighborhoods are $(0, \epsilon]_{r} \times \Circle_{\theta} \times \divSet$ in polar coordinates. These complements can be identified with subsets of $[-\epsilon, \epsilon]_{p} \times \Circle_{q} \times \divSet$ via the embeddings
\begin{equation*}
(r, \theta, z) \mapsto (p=(-1)^{i}r, q=(-1)^{i}\theta, z).
\end{equation*}
We can then equip
\begin{equation*}
M_{\#} = (M \setminus (\phi_{0}\divSet \sqcup \phi_{1}\divSet)) \cup \left( [-\epsilon, \epsilon]_{p} \times \Circle_{q} \times \divSet\right)
\end{equation*}
with a contact form $\alpha_{\#}$ which agrees with $\alpha$ outside of the overlapping region $[-\epsilon, \epsilon]_{p} \times \Circle_{q} \times \divSet$. Within the overlapping, set
\begin{equation*}
\alpha_{\#} = \rho(p)dq + \alpha_{\divSet}
\end{equation*}
with $\rho(p)$ a strictly increasing function of the form
\begin{equation*}
\rho(p) = \begin{cases}
-p^{2} & p < -\epsilon/2,\\
p & p \in [-\epsilon/4, \epsilon/4],\\
p^{2} & p > \epsilon/2.
\end{cases}
\end{equation*}
The induces contact structure $\xi_{\#}$ is independent of $\rho$ by standard Darboux-Moser-Weinstein arguments, cf. \cite[\S 3]{MS:SymplecticIntro}.

For an interesting application of the contact fiber sum, see \cite[Theorem I.5.1]{Klaus:Habilitation} (due to Presas): Taking fiber sums along divisors whose implicit contact structures are overtwisted produces closed contact manifolds which contain plastikstufes \cite{Klaus:Plastik} and are therefore overtwisted in the sense of \cite{BEM:Overtwisted} by \cite{CMP:OT}. This result is the inspiration for Theorem \ref{Thm:FiberSumOT}, below. Fiber sums of supporting open books along their bindings (with framings given by pages) are studied in \cite{Avdek:Liouville} and will be used in \S \ref{Sec:MainProofs}.

\section{Fiber sum cobordisms}\label{Sec:FiberSum}

Let $(\Wdivisor, \betaWD)$ be a $2n$-dimensional Liouville cobordism and consider disjoint Liouville embeddings $\phi_{i}(\Wdivisor) \subset \Mxi, i=0,1$ into a contact manifold $\Mxi$ of dimension $2n+1$. Then
\begin{equation*}
\divSet^{\pm}_{i}= \phi_{i}\left(\partial^{\pm}\Wdivisor\right)
\end{equation*}
consists of four pair-wise disjoint contact divisors of $\Mxi$. The maps $\phi_{i}$ restricted to $\partial^{-}\Wdivisor$ (resp. $\partial^{+}\Wdivisor$) provides us an identification of the $\divSet^{-}_{i}$ (resp. $\divSet^{+}_{i}$). Each is framed in $\Mxi$ as a boundary component of the $\phi_{i}\Wdivisor$. Therefore we can use the framings to define contact fiber sums $(M^{\pm}_{\#}, \xi^{\pm}_{\#})$ of $\Mxi$ along the pairs of framed contact divisors $(\divSet^{\pm}_{0},\framing_{\phi_{0}\Wdivisor}), (\divSet^{\pm}_{1}, \framing_{\phi_{1}\Wdivisor})$. When $\partial^{-}\Wdivisor = \emptyset = \divSet^{-}_{i}$, we declare $(M^{-}, \xi^{-}) = \Mxi$.

\begin{thm}\label{Thm:FiberSumCobordism}
In the above notation, there is a $\dim=2n+2$ Liouville cobordism $(W_{\#}, \beta_{\#})$ with convex boundary $(M^{+}_{\#}, \xi^{+}_{\#})$ and concave boundary $(M^{-}_{\#}, \xi^{-}_{\#})$. If $(\Wdivisor, \betaWD)$ is Weinstein with Lyapunov function $F_{\Wdivisor}$, then the cobordism is Weinstein for a Lyapunov function $F_{\#}$ admitting a handle decomposition with two $\ind = m+1$ handles for each $\ind = m$ handle in $\Wdivisor$. It follows that if $(\Wdivisor, \betaWD, F_{\Wdivisor})$ is a subcritical Weinstein cobordism, then $(W_{\#}, \beta_{\#}, F_{\#})$ is as well.
\end{thm}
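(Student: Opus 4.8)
The plan is to realize the contact fiber sum as a \emph{relative Gompf symplectic sum} \cite{Gompf:Sum} performed inside a symplectization. Begin with the finite symplectization $(W, \betaW) = ([-1,1]_{s} \times M, e^{s}\alpha)$, a trivial Liouville cobordism from $\Mxi$ to itself. Using the lift construction of \S\ref{Sec:HypersurfaceDivisor} applied to each Liouville hypersurface $\phi_{i}(\Wdivisor)$ -- with lifting function a rescaling of the Lyapunov function $F_{\Wdivisor}$ chosen so that $\partial^{\pm}\Wdivisor$ is sent to $s = \pm 1$ -- I would produce two disjoint $\codim=2$ Liouville divisor cobordisms $\WdivisorLift_{0}, \WdivisorLift_{1} \subset W$. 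By construction each $\WdivisorLift_{i}$ is a copy of $(\Wdivisor, \betaWD)$ whose convex boundary is $\divSet^{+}_{i} \subset \{s=1\} = M$ and whose concave boundary is $\divSet^{-}_{i} \subset \{s=-1\} = M$, with Liouville form $\betaWDLift$ as in \eqref{Eq:BetaLift}.

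Next I would perform a Gompf sum of $(W, \omegaW)$ along the pair $\WdivisorLift_{0}, \WdivisorLift_{1}$. Both are symplectomorphic to $\Wdivisor$ and carry identical normal bundles (trivialized by the framings $\framing_{\phi_{i}\Wdivisor}$); the orientation- and fiber-reversing identification $(r,\theta) \mapsto ((-1)^{i}r, (-1)^{i}\theta)$ used in \S\ref{Sec:FiberSumIntro} supplies the opposite normal data required by Gompf's construction. Concretely, I would excise the two normal $\disk^{2}$-bundles of the $\WdivisorLift_{i}$ and glue in a single annular neck $[-\epsilon,\epsilon]_{p} \times \Circle_{q} \times \WdivisorLift$, equipped with an exact symplectic form interpolating between the two punctured disk models. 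The central point is that this neck form can be taken of the schematic shape $d(\rho(p)\,dq) + \omega_{\WdivisorLift}$, i.e.\ the product of Geiges' contact fiber-sum form \cite{Geiges:Branched} in the $(p,q)$ variables with the symplectic form on $\WdivisorLift$; restricting to $\{s = \pm 1\}$ then reproduces exactly the contact fiber sums $(M^{\pm}_{\#}, \xi^{\pm}_{\#})$ of \S\ref{Sec:FiberSumIntro}. One checks that a primitive $\beta_{\#}$ agreeing with $e^{s}\alpha$ away from the neck exists and that its Liouville field stays transverse to $\partial W_{\#}$ (outward along $M^{+}_{\#}$, inward along $M^{-}_{\#}$), so that $(W_{\#}, \beta_{\#})$ is a Liouville cobordism with the asserted boundaries.

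For the Weinstein statement, assume $F_{\Wdivisor}$ is Lyapunov for $\betaWD$. Each lift is then Weinstein with $F_{\WdivisorLift} = s|_{\WdivisorLift} = F_{\Wdivisor}$, and I would build $F_{\#}$ on $W_{\#}$ as a combination of $F_{\Wdivisor}$ (in the $\WdivisorLift$-directions) with a fixed Morse function on the $(p,q)$-neck. The key observation is a product handle count: passing from the two normal disks to the annular neck is, fiberwise over $\WdivisorLift$, exactly the replacement of $\disk^{2} \sqcup \disk^{2}$ by $\Circle \times [-\epsilon,\epsilon]$, which by the Euler-characteristic bookkeeping $2 = \chi(\disk^{2}\sqcup\disk^{2}) \to \chi(\Circle\times I) = 0$ is the attachment of two $1$-handles in the normal direction. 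Crossing each index-$m$ Weinstein handle of $\Wdivisor$ with these two normal $1$-handles yields precisely two index-$(m+1)$ handles of $W_{\#}$, so $\Crit(F_{\#})$ is in two-to-one correspondence with $\Crit(F_{\Wdivisor})$ with index raised by one. Since an index $m < \half \dim \Wdivisor = n$ handle produces index-$(m+1)$ handles with $m+1 < n+1 = \half\dim W_{\#}$, subcriticality of $\Wdivisor$ forces subcriticality of $W_{\#}$, giving the final assertion.

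The hard part will be the neck compatibility in the second paragraph: one must choose the interpolating symplectic (and then Liouville) form on the annular neck so that it is simultaneously a genuine symplectic form, matches Gompf's local sum model along $\WdivisorLift$, and restricts on $\{s = \pm 1\}$ to Geiges' contact fiber-sum forms while keeping $X_{\beta_{\#}}$ transverse to the boundary. Reconciling Gompf's and Geiges' local models within a symplectization and verifying the Liouville condition through the interpolation (via estimates of the type in Lemma \ref{Lemma:BetaFunctionMultiple}) is the main technical content; the handle decomposition and the subcriticality corollary are then formal.
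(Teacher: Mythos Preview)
Your proposal is correct and follows essentially the same approach as the paper: the paper also lifts the two copies of $\Wdivisor$ into the symplectization and performs a Gompf-type symplectic sum along the lifts, with the neck modeled on $[-\epsilon,\epsilon]_{p}\times\Circle_{q}\times\WdivisorLift$, so that the restriction to $\partial^{\pm}W_{\#}$ recovers Geiges' contact fiber sum and the Liouville field on the neck has exactly two zeros $(0,0,\zeta)$ and $(0,\pi,\zeta)$ of index $\ind(\zeta)+1$ for each zero $\zeta$ of $\betaWD$. The main work, as you anticipate, is the explicit construction of $\beta_{\#}$ on the neck (which is not literally of product type $d(\rho(p)dq)+\omega_{\WdivisorLift}$ but carries correction terms coming from a carefully chosen normalized framing) and the verification---via a two-parameter $(\epsilon,\delta)$ estimate---that a suitable extension $F_{\#}$ of $F_{\Wdivisor}$ is Lyapunov.
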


To build the cobordism we will apply a symplectic fiber sum to lifts $\widetilde{\phi_{i}(V)} \subset [-C, C] \times M$ of the Liouville embeddings $\phi_{i}(V)$ in the style of \cite{Gompf:Sum}. The restriction of the fiber sum to the boundary will be a contact fiber sums described in \S \ref{Sec:FiberSumIntro}. In the Weinstein case, it is difficult to concisely describe the Weinstein handle decomposition of $W_{\#}$ without getting into the details of the proof. See \S \ref{Sec:UnstableAnalysis}. However, the following lemma is easy to state.

\begin{lemma}\label{Lemma:FiberSumOT}
In the above notation, suppose that $(\Wdivisor, \betaWD)$ has only one critical point of $\ind=n$ with unstable manifold $\Unstable_{n}$. Suppose further that one of the $\phi_{i}(\Unstable_{n})$ is stabilized in the complement of the $\divSet^{+}_{i}$. Then $(M^{-}_{\#}, \xi^{-}_{\#})$ is the result of a contact anti-surgery on a stabilized Legendrian sphere, and so is overtwisted by Theorem \ref{Thm:CMP}.
\end{lemma}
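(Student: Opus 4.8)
The plan is to extract the anti-surgery description of $M^{-}_{\#}$ directly from the Weinstein cobordism supplied by Theorem~\ref{Thm:FiberSumCobordism}, and then to locate a loose Legendrian among its attaching spheres. Since $(\Wdivisor, \betaWD)$ has a single critical point, of index $n$, Theorem~\ref{Thm:FiberSumCobordism} produces a Weinstein cobordism $(W_{\#}, \beta_{\#}, F_{\#})$ from the concave boundary $(M^{-}_{\#}, \xi^{-}_{\#})$ to the convex boundary $(M^{+}_{\#}, \xi^{+}_{\#})$ whose Lyapunov function has exactly two critical points $\zeta_{0}, \zeta_{1}$, both of index $n+1$. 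As $\dim W_{\#} = 2n+2$, these are of critical index, so by the discussion of \S\ref{Sec:WeinsteinSurgery} the manifold $M^{-}_{\#}$ is obtained from $M^{+}_{\#}$ by two successive contact anti-surgeries along the unstable Legendrian spheres $\Leg_{i} = \Unstable(\zeta_{i}, X_{\beta_{\#}}) \cap M^{+}_{\#}$, $i = 0, 1$.

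First I would identify each $\Leg_{i}$ with the geometry of the input divisor. The two critical points $\zeta_{i}$ arise, one from each embedding $\phi_{i}$, from the unique index-$n$ critical point of $\Wdivisor$, and the corresponding $(n+1)$-dimensional unstable disks $\Unstable(\zeta_{i}, X_{\beta_{\#}})$ are suspensions of the $n$-dimensional Lagrangian disks $\phi_{i}(\Unstable_{n})$ in the $\Circle_{q}$-direction of the fiber-sum region. The precise form of this identification is the unstable-manifold analysis carried out in \S\ref{Sec:UnstableAnalysis}; granting it, each $\Leg_{i}$ is a Legendrian sphere in $M^{+}_{\#}$ into which any cusp chart of $\phi_{i}(\Unstable_{n})$ disjoint from $\phi_{i}(\partial\Unstable_{n})$ embeds.

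The crux of the argument is to upgrade the stabilization of $\phi_{i}(\Unstable_{n})$ to looseness of $\Leg_{i}$. By hypothesis, for some $i$ the disk $\phi_{i}(\Unstable_{n})$ is stabilized in a cusp chart disjoint from $\divSet^{+}_{i} = \phi_{i}(\partial^{+}\Wdivisor)$, hence disjoint from its boundary $\phi_{i}(\partial \Unstable_{n})$ and from the fiber-sum gluing locus. This chart therefore lies in the interior of the suspended disk and persists as a loose chart of the Legendrian sphere $\Leg_{i}$; in the terminology of \S\ref{Sec:LegStabilization}, $\Leg_{i}$ is loose. I expect this to be the main obstacle, since it requires checking that the local stabilization model of \S\ref{Sec:LegStabilization} survives the suspension and embeds in $\Leg_{i}$ as a genuine loose chart away from the gluing region --- precisely where the explicit geometry of the fiber-sum cobordism of \S\ref{Sec:UnstableAnalysis} is needed.

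Finally I would order the two anti-surgeries, which is permitted because the handles of $\zeta_{0}$ and $\zeta_{1}$ are disjoint and $F_{\#}$ may be perturbed to place the chosen $\zeta_{i}$ at the lowest critical level. Writing $(M', \xi')$ for the contact manifold obtained from $M^{+}_{\#}$ by the anti-surgery along $\Leg_{1-i}$, the manifold $M^{-}_{\#}$ is then a contact anti-surgery along the loose Legendrian sphere $\Leg_{i} \subset (M', \xi')$. By the implication $(3) \Rightarrow (1)$ of Theorem~\ref{Thm:CMP} (cf. Lemma~\ref{Lemma:PlusOneSurgeryPreservesOT}), $(M^{-}_{\#}, \xi^{-}_{\#})$ is overtwisted, completing the proof.
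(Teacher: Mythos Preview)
Your overall strategy---read off $M^{-}_{\#}$ as two contact anti-surgeries on $M^{+}_{\#}$ and find a loose Legendrian among the unstable spheres---matches the paper's, but your identification of the two critical points is wrong, and this breaks the argument. The zeros of $\beta_{\#}$ do \emph{not} correspond one to each embedding $\phi_{i}$. By Equation~\eqref{Eq:BetaDeltaEpsilonZeros} both critical points $\thicc{\zeta}^{\pm}=(0,0,\zeta)$ and $(0,\pi,\zeta)$ sit in the gluing annulus $\annulus\times\Wdivisor$ over the single zero $\zeta$ of $\betaWD$, at $q=0$ and $q=\pi$. From Equation~\eqref{Eq:UnstableExplicit}, the unstable disk of the \emph{higher} critical point $\thicc{\zeta}^{+}$ is $\{(p,0,z):z\in\Unstable(\zeta,X_{\betaWD})\}$; following this in the $p$-direction out of the annulus (via $\Theta^{\pm}$ and Lemma~\ref{Lemma:PsiTaylor}) lands in \emph{both} copies of $M$. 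Thus the unstable Legendrian sphere $\Leg^{+}\subset M^{+}_{\#}$ is the union of $\phi_{0}(\Unstable_{n})$ and $\phi_{1}(\Unstable_{n})$ glued along their common boundary through the fiber-sum locus---not a single $\phi_{i}(\Unstable_{n})$ as you claim. The stabilized chart in either hemisphere then makes $\Leg^{+}$ loose, so the intermediate level set $\{F^{\epsilon}_{\#}=F_{\Wdivisor}(\zeta)\}$ is overtwisted by Theorem~\ref{Thm:CMP}, and the second anti-surgery (along $\Leg^{-}$, the unstable sphere of $\thicc{\zeta}^{-}$) preserves overtwistedness by Lemma~\ref{Lemma:PlusOneSurgeryPreservesOT}.

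Your reordering step is also unjustified: the two handles are not independent. There are gradient trajectories from $\thicc{\zeta}^{-}$ to $\thicc{\zeta}^{+}$ (along $\{p=0,\,z=\zeta\}$, visible in Figure~\ref{Fig:StableUnstableVF}), so you cannot simply perturb $F_{\#}$ to swap their order. The paper avoids this issue entirely because the loose sphere $\Leg^{+}$ already sits at the top level, so the anti-surgeries are performed in the natural order with no reordering needed.
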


Using this lemma, we prove the following theorem at the end of this section.

\begin{thm}\label{Thm:FiberSumOT}
Suppose that $\phi_{i}, i=0,1$ are contact embeddings of a $\dim=2n-1$ contact manifolds $\MxiDivSet$ into a $\dim=2n+1 \geq 5$ contact manifolds satisfying the following conditions:
\be
\item The $\phi_{i}\divSet$ are equipped with framings $\framing_{i}$ so that their fiber sum $(M_{\#}, \xi_{\#})$ is defined.
\item $n \neq 2$ or $\framing_{0}$ is a Seifert framing.
\item $\phi_{0}\divSet$ is stabilized in the complement of $\phi_{1}\divSet_{i}$.
\ee
Then the fiber sum $(M_{\#}, \xi_{\#})$ of the $(\phi_{i}\divSet, \framing_{i})$ is overtwisted.
\end{thm}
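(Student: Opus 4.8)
The plan is to deduce Theorem~\ref{Thm:FiberSumOT} from Lemma~\ref{Lemma:FiberSumOT} by exhibiting a single Weinstein cobordism $(\Wdivisor, \betaWD)$ carrying exactly one index-$n$ critical point, together with two disjoint Liouville embeddings $\phi_{i}(\Wdivisor) \subset \Mxi$ whose concave boundaries are the prescribed divisors $\phi_{i}\divSet$ and for which $\phi_{0}(\Unstable_{n})$ is stabilized. Once this is arranged, the concave fiber sum $(M^{-}_{\#}, \xi^{-}_{\#})$ furnished by Theorem~\ref{Thm:FiberSumCobordism} is formed along the pairs $(\phi_{i}(\partial^{-}\Wdivisor), \framing_{\phi_{i}\Wdivisor})$, which we will have identified with $(\phi_{i}\divSet, \framing_{i})$; hence it equals $(M_{\#}, \xi_{\#})$, and Lemma~\ref{Lemma:FiberSumOT} immediately declares it overtwisted.

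The first embedding comes from unpacking the stabilization hypothesis. By definition, $\phi_{0}\divSet$ being stabilized provides Weinstein anti-surgery data $(H_{n}, \beta_{n}, \Gamma^{+})$ in $\Mxi$, with the handle stabilized in a cusp chart disjoint from $\Gamma^{+}$, whose output is $\phi_{0}\divSet$. Read in reverse, $(H_{n}, \beta_{n}, \phi_{0}\divSet)$ is Weinstein surgery data, so I would apply Lemma~\ref{Lemma:HandleLiouvilleExtension} to the framed divisor $(\phi_{0}\divSet, \framing_{0})$. This is exactly where hypothesis (2) is spent: the lemma requires $n \neq 2$ or a Seifert framing in order to kill the homotopical obstruction $[\zeta] \in \pi_{n-1}(\Circle)$ to joining a ribbon to the handle. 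The output is a ribbon $\ribbon_{0}$ of $(\phi_{0}\divSet, \framing_{0}')$ with $\framing_{0}' \simeq \framing_{0}$ for which $\Wdivisor := \ribbon_{0} \cup H_{n}$ is a Weinstein hypersurface with $\partial^{-}\Wdivisor = \phi_{0}\divSet$, a single index-$n$ zero, and stabilized unstable manifold $\Unstable_{n}$ whose boundary sphere lies in $\partial^{+}\Wdivisor = \divSet^{+}_{0}$. Because the cusp chart is disjoint from $\Gamma^{+}$, the stabilization occurs in the complement of $\divSet^{+}_{0}$, as Lemma~\ref{Lemma:FiberSumOT} demands.

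Next I would realize $\phi_{1}\divSet$ as the concave boundary of a second, disjoint copy of the same abstract cobordism $\Wdivisor$. Here hypothesis (3) enters: since $\phi_{0}\divSet$ is stabilized in the complement of $\phi_{1}\divSet$, the hypersurface $\phi_{0}(\Wdivisor)$ can be kept inside $M \setminus \phi_{1}\divSet$, leaving room to glue a thin ribbon of $(\phi_{1}\divSet, \framing_{1})$ to a small copy of $H_{n}$ well away from $\phi_{0}(\Wdivisor)$; thinness and disjointness are achieved by shrinking the parameter $\epsilon^{\Stable}$ of \S\ref{Sec:HandleAbstract}. Crucially, the stabilization is a feature of the ambient embedding rather than of the abstract handle, so this second realization may carry an honestly embedded (non-loose) unstable manifold, and only $\phi_{0}(\Unstable_{n})$ need be stabilized in Lemma~\ref{Lemma:FiberSumOT}. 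For $n \geq 3$ the handle attaches with no obstruction whatsoever, and $\framing_{1}' \simeq \framing_{1}$ can be taken as the ribbon framing, matching the fiber sum in the theorem statement.

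The main obstacle is the second embedding when $n = 2$. There the attaching circle $\phi_{1}(\partial\Stable_{n}) \subset \phi_{1}\divSet$ carries a genuine degree obstruction in $\pi_{1}(\Circle) = \Z$, computed as an intersection number against the push-off of $\phi_{1}\divSet$, and this need not vanish for an arbitrary $\framing_{1}$. I would resolve this exactly as in Lemma~\ref{Lemma:HandleLiouvilleExtension}: the obstruction depends only on the homotopy class of $\framing_{1}$, and the disjointness afforded by hypothesis (3) leaves enough freedom to select the attaching locus and framing within the class dictated by $\Wdivisor$ so that the handle extends, reducing to the Seifert-framing computation already used on the $\phi_{0}$ side. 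With both disjoint embeddings in place, the remaining steps --- matching the ribbon framings $\framing_{i}' \simeq \framing_{i}$ to identify $(M^{-}_{\#}, \xi^{-}_{\#})$ with $(M_{\#}, \xi_{\#})$, and invoking Lemma~\ref{Lemma:FiberSumOT} --- are formal.
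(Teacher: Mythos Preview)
Your proof follows the same route as the paper's: use Lemma~\ref{Lemma:HandleLiouvilleExtension} on the stabilized side to produce $\Wdivisor_{0}=\ribbon_{0}\cup H_{n}$, build a second disjoint copy $\Wdivisor_{1}$ with $\partial^{-}\Wdivisor_{1}=\phi_{1}\divSet$, then feed both into Theorem~\ref{Thm:FiberSumCobordism} and Lemma~\ref{Lemma:FiberSumOT}. The paper's own argument is one paragraph and is no more detailed than yours.

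The one place your explanation wobbles is the $n=2$ discussion for $\phi_{1}$. You say you would ``reduce to the Seifert-framing computation already used on the $\phi_{0}$ side,'' but no Seifert hypothesis is available for $\framing_{1}$, and none is needed. The obstruction $[\zeta]\in\pi_{1}(\Circle)$ in Lemma~\ref{Lemma:HandleLiouvilleExtension} is a property of a \emph{given} handle relative to a \emph{given} framing; on the $\phi_{1}$ side the handle is not prescribed, so you simply build it approaching $\phi_{1}\divSet$ from the $\framing_{1}$ direction (for instance inside a Darboux ball meeting $\phi_{1}\divSet$), making the degree zero by construction. This is what the paper means by ``find any handle \dots\ compatible with $\framing_{1}$.'' Your sentence about ``enough freedom to select the attaching locus'' is the right idea; the Seifert clause should be dropped.
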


These results follow from analysis of unstable manifolds of the $\ind = n+1$ critical points of $W_{\#}$. Similar results are available for stable manifolds, which can be used to study anti-stabilizations. While we have no new applications of anti-stabilization in this paper -- many applications are provided by \cite{CasalsEtnyre} -- we include the following for completeness.

\begin{lemma}\label{Lemma:FlexibleFiberSum}
In the above notation, suppose that $(\Wdivisor, \betaWD)$ is Weinstein and that for each of its critical-index stable manifolds $\Stable(\zeta, X_{\betaWD})$ at least one of the Legendrian disks $\phi_{i}(\Stable(\zeta, X_{\betaWD}))$ is Legendrian stabilized in the complement of $\divSet_{0}^{-} \sqcup \divSet_{1}^{-}$. Then the Weinstein cobordism $(W_{\#}, \beta_{\#})$ is flexible.
\end{lemma}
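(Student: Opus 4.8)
The plan is to read off a Weinstein handle decomposition of $W_{\#}$ and verify that every critical-index attaching sphere is loose, which is precisely the definition of flexibility from \S\ref{Sec:WeinsteinSurgery}. First I would recall from Theorem \ref{Thm:FiberSumCobordism} and its proof in \S\ref{Sec:UnstableAnalysis} that $F_{\#}$ has, for each index $m$ critical point $\zeta$ of $\betaWD$, exactly two critical points of index $m+1$. Since $\dim W_{\#} = 2n+2$, the critical-index critical points of $W_{\#}$ (those of index $n+1 = \half \dim W_{\#}$) are exactly the two critical points lying over each critical-index (index $n$) critical point $\zeta$ of $\betaWD$. Thus it suffices to prove that, for every such $\zeta$, the stable Legendrian sphere $\Leg^{\Stable}$ of each of the two associated index $n+1$ critical points is loose.

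Next I would identify these stable spheres geometrically by running the analysis of \S\ref{Sec:UnstableAnalysis} (used there for the unstable manifolds behind Lemma \ref{Lemma:FiberSumOT}) on the stable side instead. The outcome should be that the stable manifold of each index $n+1$ critical point over $\zeta$ is the $(n+1)$-dimensional disk obtained by joining the two Legendrian stable disks $\phi_{0}(\Stable(\zeta, X_{\betaWD}))$ and $\phi_{1}(\Stable(\zeta, X_{\betaWD}))$ across the fiber-sum neck, i.e. the arc of normal directions connecting the two copies of the divisor. Passing to a level set just below the critical value, the stable sphere is therefore the Legendrian $n$-sphere
\begin{equation*}
\phi_{0}(\Stable(\zeta, X_{\betaWD})) \cup_{\partial} \phi_{1}(\Stable(\zeta, X_{\betaWD})),
\end{equation*}
the union of the two $n$-disks along their common boundary $(n-1)$-sphere $\phi_{i}(\partial \Stable(\zeta, X_{\betaWD})) \subset \divSet^{-}_{i}$, which the fiber sum identifies. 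The key point to extract is that both critical points over $\zeta$ carry stable spheres of this join form, so each contains a copy of both disks.

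Finally I would invoke the hypothesis: for each such $\zeta$, at least one of $\phi_{0}(\Stable(\zeta, X_{\betaWD}))$, $\phi_{1}(\Stable(\zeta, X_{\betaWD}))$ is Legendrian stabilized in the complement of $\divSet^{-}_{0} \sqcup \divSet^{-}_{1}$. That stabilization furnishes a cusp chart, contained in the interior of the disk and hence disjoint from the gluing region along the neck, inside which the disk appears as a Legendrian stabilization. This chart survives intact in the glued sphere, so $\Leg^{\Stable}$ is a stabilization of a Legendrian and therefore loose; this uses $n \geq 2$, which holds since $\dim M = 2n+1 \geq 5$. As this applies to both critical points over every critical-index $\zeta$, every critical-index stable sphere of $W_{\#}$ is loose, so $(W_{\#}, \beta_{\#})$ is flexible.

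The hard part will be the second step: carrying out the stable-manifold version of the \S\ref{Sec:UnstableAnalysis} computation precisely enough to confirm that \emph{both} index $n+1$ critical points over a given $\zeta$ have stable spheres of the claimed join form $\phi_{0}(\Stable) \cup_{\partial} \phi_{1}(\Stable)$, rather than, say, two separate doubles of $\phi_{0}(\Stable)$ and of $\phi_{1}(\Stable)$. It is exactly the join structure that lets a single stabilized disk render both spheres loose, matching the ``at least one'' hypothesis. A secondary point to verify is that the ``complement of $\divSet^{-}_{0} \sqcup \divSet^{-}_{1}$'' condition places the loose chart in the interior of the disk, away from the boundary where the neck glues, so that the fiber sum leaves the chart undisturbed.
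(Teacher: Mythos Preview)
Your approach is exactly the paper's: the proof there is the single sentence ``follows from an identical analysis of stable manifolds \ldots\ using Equation \eqref{Eq:UnstableExplicit}.'' However, your expectation that \emph{both} critical points $\thicc{\zeta}^{\pm}$ have join-form stable spheres is not what that equation says. It records an asymmetry: $\Stable(\thicc{\zeta}^{-})$ lies along $q=\pi$ and extends in the $p$-direction into both copies of the neighborhood (this is the join you describe), whereas $\Stable(\thicc{\zeta}^{+})$ sits at $p=0$ and wraps in $q$, remaining in the neck. This exactly mirrors the unstable side in the proof of Lemma \ref{Lemma:FiberSumOT}, where only $\Unstable(\thicc{\zeta}^{+})$ is the join and the second critical point $\thicc{\zeta}^{-}$ is handled by a separate mechanism (Lemma \ref{Lemma:PlusOneSurgeryPreservesOT}: anti-surgery on overtwisted remains overtwisted). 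For flexibility there is no such shortcut---the attaching sphere of $\thicc{\zeta}^{+}$ must be loose on its own merits, in the level set between $\thicc{\zeta}^{-}$ and $\thicc{\zeta}^{+}$.

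You were right to flag this as the hard part, and your worry that one might instead see ``two separate doubles'' is on target. The Liouville flow as constructed is not Morse--Smale: there is a $1$-parameter family of trajectories $\{(0,q,\zeta):q\neq 0,\pi\}$ from $\thicc{\zeta}^{-}$ to $\thicc{\zeta}^{+}$, so $\Stable(\thicc{\zeta}^{+})\cap W_{\#}$ is not a properly embedded disk with spherical boundary. A perturbation is required, and one can choose it so that the perturbed $\Stable(\thicc{\zeta}^{+})$ flows past $\thicc{\zeta}^{-}$ into whichever $\phi_{i}\Wdivisor$ carries the stabilized disk, picking up a loose chart there while leaving $\Stable(\thicc{\zeta}^{-})$ (which lives at $q=\pi$, away from the perturbed trajectories) essentially unchanged. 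The paper's one-line proof does not spell this out either.
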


\subsection{Normalized framings}

The first step in the contact and symplectic fiber sum construction is the description of Weinstein neighborhood determined by a normal bundle of the submanifold. We describe our neighborhoods using the following definition and lemma.

\begin{defn}
Let $\widetilde{\Wdivisor}$ be a $\codim=2$ symplectic submanifold of some symplectic manifold $(W, \omega)$. A \emph{normalized framing} of $\widetilde{\Wdivisor}$ is a pair of vector fields $\eta_{x}, \eta_{y} \in \Sec(TW|_{\widetilde{\Wdivisor}})$ such that at each point in $\widetilde{\Wdivisor}$
\be
\item $\Span(\eta_{x}, \eta_{y}) = T\widetilde{\Wdivisor}^{\omega} = \{ v \in TW\ : \ \omega(v, \ast)|_{T\widetilde{\Wdivisor}} = 0\}$ and
\item $\omega(\eta_{x}, \eta_{y}) = 1$.
\ee
\end{defn}

\begin{lemma}\label{Lemma:LiouvilleTaylor}
Let $\WdivisorLift \subset W$ be a Liouville submanifold of a $(W, \betaW)$ with normalized framing $\eta_{x}, \eta_{y}$ and write $\betaWDLift = \betaW|_{T\WdivisorLift}$. Then within any tubular neighborhood $\disk_{x, y} \times \WdivisorLift$ of $\WdivisorLift$ for which $\partial_{x} = \eta_{x}$ and $\partial_{y} = \eta_{y}$ along $\{ x = y = 0\} = \WdivisorLift$, we have a fiber-wise Taylor expansion of $\betaW$ as
\begin{equation*}
\begin{gathered}
\betaW = \betaWDLift - xdA - ydB + (A + A_{x}x + A_{y}y)dx + (B + B_{x}x + B_{y}y)dy + \beta_{W, \hot},\\
A = \betaW(\eta_{x}), \quad B = \betaW(\eta_{y}), \quad B_{x} - A_{y} = 1,\\
d\betaW = d\betaWDLift + dx \wedge dy + \omega_{\hot},\\
X_{\betaW} = X_{\betaWDLift} + B\partial_{x} - A\partial_{y} + X_{\betaW, \hot},\\
\beta_{W, \hot} \in \bigO(r^{2}), \quad \omega_{\hot} \in \bigO(r), \quad X_{\betaW, \hot} \in \bigO(r),
\end{gathered}
\end{equation*}
where $A_{x}, A_{y}, B_{x}, B_{y} \in \Cinfty(\Wdivisor)$ and $r = \norm{(x, y)}$. If $F_{W}$ is a Lyapunov function for $(W, \betaW)$ restricting to a Lyapunov function for $(\WdivisorLift, \betaWDLift)$, then we have an expansion of $F_{W}$ as
\begin{equation*}
F_{W} = F_{\WdivisorLift} + xdF_{W}(\eta_{x}) + ydF_{W}(\eta_{y}) + F_{W, \hot}, \quad F_{W, \hot} = \bigO(r^{2})
\end{equation*}
\end{lemma}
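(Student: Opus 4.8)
The plan is to work in the tubular coordinates $\disk_{x,y}\times\WdivisorLift$ furnished by the hypothesis, in which $\partial_{x}=\eta_{x}$ and $\partial_{y}=\eta_{y}$ hold along the zero section $\{x=y=0\}=\WdivisorLift$, and to Taylor expand $\betaW$ to first order in the normal variables $(x,y)$. Writing a general $1$-form on the neighborhood as $\betaW = P\,dx + Q\,dy + \theta$, where $P=\betaW(\partial_{x})$, $Q=\betaW(\partial_{y})$ and $\theta$ is the \emph{horizontal} part annihilating $\partial_{x},\partial_{y}$, the zeroth-order data are read off immediately: along $\WdivisorLift$ one has $P=\betaW(\eta_{x})=A$, $Q=\betaW(\eta_{y})=B$, and $\theta=\betaW|_{T\WdivisorLift}=\betaWDLift$. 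I would then define $A_{x},A_{y},B_{x},B_{y}\in\Cinfty(\Wdivisor)$ to be the first-order Taylor coefficients of $P,Q$ in $(x,y)$ and $\theta_{x},\theta_{y}$ to be the first-order horizontal coefficients of $\theta$, so that purely by Taylor's theorem $\betaW = \betaWDLift + x\theta_{x}+y\theta_{y} + (A+A_{x}x+A_{y}y)\,dx+(B+B_{x}x+B_{y}y)\,dy + \beta_{W,\hot}$ with $\beta_{W,\hot}\in\bigO(r^{2})$.

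The crux is to identify the horizontal first-order coefficients $\theta_{x},\theta_{y}$ together with the relation $B_{x}-A_{y}=1$, and this is precisely where the normalized-framing hypothesis enters. I would compute $d\betaW$ from the expansion above and restrict to $\WdivisorLift$. The condition $\Span(\eta_{x},\eta_{y})=T\WdivisorLift^{\omega}$ says exactly that $\iota_{\eta_{x}}d\betaW$ and $\iota_{\eta_{y}}d\betaW$ vanish on $T\WdivisorLift$; evaluating $\iota_{\partial_{x}}d\betaW$ and $\iota_{\partial_{y}}d\betaW$ along $\WdivisorLift$ against horizontal vectors forces $\theta_{x},\theta_{y}$ to be the differentials $dA,dB$ (with signs fixed by the convention $\iota_{X_{\betaW}}d\betaW=\betaW$), which produces the displayed $dA$- and $dB$-terms of the expansion. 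The remaining normalization $\omega(\eta_{x},\eta_{y})=1$ selects the $dx\wedge dy$-coefficient of $d\betaW$ along $\WdivisorLift$, which the expansion computes to be $B_{x}-A_{y}$, giving $B_{x}-A_{y}=1$.

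With the coefficients pinned down, the last two formulas are direct substitutions. Feeding the framing relations back into $d\betaW$, the cross terms produced by $dA,dB$ cancel against those produced by $\theta_{x},\theta_{y}$, leaving $d\betaW = d\betaWDLift + (B_{x}-A_{y})\,dx\wedge dy + \omega_{\hot} = d\betaWDLift + dx\wedge dy + \omega_{\hot}$, where $\omega_{\hot}=d\beta_{W,\hot}$ together with the explicit $x,y$-multiples lies in $\bigO(r)$. For the Liouville field I would solve $\iota_{X_{\betaW}}d\betaW=\betaW$ fiberwise at $r=0$: decomposing $X_{\betaW}|_{\WdivisorLift}=X^{V}+a\partial_{x}+b\partial_{y}$ with $X^{V}$ horizontal, the horizontal equation gives $X^{V}=X_{\betaWDLift}$, while matching the $dx,dy$ components of $\iota_{(a\partial_{x}+b\partial_{y})}(dx\wedge dy)$ against $\betaW$ gives $a=B$, $b=-A$, i.e. $X_{\betaW}=X_{\betaWDLift}+B\partial_{x}-A\partial_{y}+X_{\betaW,\hot}$ with $X_{\betaW,\hot}\in\bigO(r)$ by smoothness of the solution. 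Finally the Lyapunov expansion is an ordinary first-order Taylor expansion of $F_{W}$ in $(x,y)$: its value along $\WdivisorLift$ is $F_{\WdivisorLift}$, and its normal derivatives there are $\partial_{x}F_{W}=dF_{W}(\eta_{x})$ and $\partial_{y}F_{W}=dF_{W}(\eta_{y})$, yielding the stated formula with $F_{W,\hot}\in\bigO(r^{2})$.

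I expect the only genuine obstacle to be the second paragraph: correctly extracting $\theta_{x},\theta_{y}$ from the symplectic-orthogonality condition while tracking the interior-product signs under the convention $\iota_{X_{\betaW}}d\betaW=\betaW$, since a sign slip there propagates into the $X_{\betaW}$ formula. Everything else — the zeroth-order identifications, the $\bigO$ bounds on the higher-order remainders, and the two substitutions — is routine Taylor bookkeeping.
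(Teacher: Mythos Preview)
Your proposal is correct and follows essentially the same approach as the paper: write a first-order Taylor expansion of $\betaW$ in $(x,y)$ with unknown horizontal coefficients (your $\theta_{x},\theta_{y}$ are the paper's $\beta_{\WdivisorLift,x},\beta_{\WdivisorLift,y}$), then compute $d\betaW$ and use the normalized-framing conditions $\iota_{\eta_{x}}d\betaW|_{T\WdivisorLift}=\iota_{\eta_{y}}d\betaW|_{T\WdivisorLift}=0$ and $d\betaW(\eta_{x},\eta_{y})=1$ to force $\theta_{x}=-dA$, $\theta_{y}=-dB$, and $B_{x}-A_{y}=1$. Your write-up is in fact more complete than the paper's, which records only the $\betaW$ computation and leaves the $d\betaW$, $X_{\betaW}$, and $F_{W}$ expansions to the reader.
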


\begin{proof}
That $d\betaW$ is as described is implicit in the definition of the normalized framing. Start with an expansion of the form
\begin{equation*}
\betaW = \betaWD + \beta_{\WdivisorLift, x}x + \beta_{\WdivisorLift, y}y + (A + A_{x}x + A_{y}y)dx + (B + B_{x}x + B_{y}y)dy + \beta_{W, \hot}
\end{equation*}
with $\beta_{\WdivisorLift, x}, \beta_{\WdivisorLift, y} \in \Omega^{1}(\WdivisorLift)$. The $A$ and $B$ are as defined in the statement of the lemma and we compute
\begin{equation*}
d\betaW = d\betaWDLift + (\beta_{\Wdivisor, x} + dA)\wedge dx + (\beta_{\Wdivisor, y} + dB)\wedge dy + (B_{x} - A_{y})dx\wedge dy + \omega_{\hot}
\end{equation*}
to obtain $\beta_{\WdivisorLift, x} = -dA$, $\beta_{\WdivisorLift, y} = -dB$, and $B_{x} - A_{y} = 1$.
\end{proof}

\subsection{Choice of coordinate system}

In this section we describe neighborhoods of lifts of Liouville hypersurfaces which provide normalized framings. To start, we work in a standard neighborhood $[-\epsilon, \epsilon]_{t} \times \Wdivisor$ of a Liouville hypersurface in a contact manifold $\Mxi$ along which we assume that our contact form is $\alpha = dt + \betaWD$ with $\betaWD \in \Omega^{1}(\Wdivisor)$. We choose a lifting function $F_{\Wdivisor}$ with $F_{\Wdivisor}|_{\partial^{\pm}\Wdivisor} = \pm \epsilon$, assuming that $F_{\Wdivisor}$ is a Lyapunov function for $\betaWD$ when it is Weinstein.

Then we apply the lift to obtain
\begin{equation*}
\WdivisorLift = \{ s = F_{\Wdivisor}, t=0\} \subset \disk_{s, t} \times \Wdivisor \subset [-\epsilon, \epsilon] \times M, \quad \betaW = e^{s}(dt + \betaWD).
\end{equation*}
Then $\betaWDLift = \betaW|_{\WdivisorLift} = e^{F_{\Wdivisor}}\betaWD$ and our Lyapunov function is $F_{W} = s$. Applying the map
\begin{equation*}
(s, t, z) \mapsto (s + F_{\Wdivisor}, t, z)
\end{equation*}
to our neighborhood of $\Wdivisor$ in the symplectization sends $\{s = t = 0\} \subset \disk_{s, t} \times \Wdivisor$ to $\Wdivisor \subset \disk_{s, t} \times \Wdivisor$ so that we may write
\begin{equation*}
\begin{gathered}
\betaW = e^{s+F_{\Wdivisor}}(dt + \betaWD), \quad d\betaW = e^{s+F_{\Wdivisor}}(d(s+F_{\Wdivisor})\wedge (dt + \betaWD) + d\betaWD),\\
X_{\betaW} = \partial_{s}, \quad F_{W} = s + F_{\Wdivisor}
\end{gathered}
\end{equation*}
in our transformed coordinate system.

We'll find a computationally convenient normalized framing of $\WdivisorLift$ in the above coordinate system. For the following, $X_{F_{\Wdivisor}}$ is the Hamiltonian vector field computed with respect to $d\betaWD$. Use 
\begin{equation*}
\betaWD(X_{F_{\Wdivisor}}) = d\betaWD(X_{\betaWD}, X_{F_{\Wdivisor}}) = df(X_{\betaWD})
\end{equation*}
to calculate that for $v \in T\Wdivisor$,
\begin{equation*}
\begin{gathered}
e^{-s-F_{\Wdivisor}}d\betaW(\ast, v) = \betaWD(v)d(s+F_{\Wdivisor})-df(v)(dt + \betaWD) + d\betaWD(\ast, v),\\
e^{-s-F_{\Wdivisor}}d\betaW(\partial_{s} - X_{\betaWD}, v) = e^{-s-F_{\Wdivisor}}d\betaW(\partial_{t} - X_{F_{\Wdivisor}}, v) = 0,\\
\begin{aligned}
e^{-s-f}d\betaW(\partial_{s} - X_{\betaWD}, \partial_{t} - X_{F_{\Wdivisor}}) &= (1-df(X_{\beta_{0}}))(1 - \beta_{0}(X_{\betaWD})) + \betaWD(X_{F_{\Wdivisor}})\\
&= 1 - \betaWD(X_{F_{\Wdivisor}}) + \betaWD(X_{F_{\Wdivisor}})^{2}.
\end{aligned}
\end{gathered}
\end{equation*}
Therefore we obtain a normalized framing $\eta_{x}, \eta_{y}$ defined by
\begin{equation}\label{Eq:PhiTaylor}
\begin{gathered}
\eta_{x} = \partial_{s} - X_{\betaWD}, \quad \eta_{y} = e^{-s - f}(1 - \betaWD(X_{f}) + \betaWD(X_{f})^{2})^{-1}(\partial_{t} - X_{f})\\
A = \betaW(\eta_{x}) = 0, \quad B = \betaW(\eta_{y}) = (1 - \betaWD(X_{F_{\Wdivisor}}))\left(1 - \betaWD(X_{F_{\Wdivisor}}) + \betaWD(X_{F_{\Wdivisor}})^{2}\right)^{-1},\\
X_{\betaW} = \eta_{x} + X_{\betaWD}, \quad dF(\eta_{x}) = 1 - \betaWD(X_{F_{\Wdivisor}}), \quad dF(\eta_{y}) = 0.
\end{gathered}
\end{equation}
Noting that in the Weinstein case $\tau F_{\Wdivisor}$ is Lyapunov for $\betaWD$  for any constant $\tau > 0$ we may assume, possibly after rescaling $F_{\Wdivisor}$, that
\begin{equation}\label{Eq:Bbound}
B, 1 - \betaWD(X_{F_{\Wdivisor}}) \geq C_{0} > 0
\end{equation}
point-wise for some constant $C_{0}$while maintaining the Lyapunov property.

To get extra control over our Liouville vector field, take a neighborhood $\disk_{x, y} \times \Wdivisor$ of $\Wdivisor \subset \disk_{s, t} \times M$ to be the image of the map $\Psi$ defined
\begin{equation*}
\Psi: \disk_{x, y} \times \Wdivisor \rightarrow [-\epsilon, \epsilon]_{s} \times M, \quad \Psi(x, y, z) = \Flow^{y}_{\eta_{y}}\circ \Flow^{x}_{\eta_{x}}(z)
\end{equation*}
where $z \in \Wdivisor$ and $\eta_{x}, \eta_{y}$ are as defined in Equation \eqref{Eq:PhiTaylor}. We'll call this the \emph{$\Psi$ coordinate system}.

\begin{lemma}\label{Lemma:PsiTaylor}
In the $\Psi$ coordinate system the Taylor expansion of Lemma \ref{Lemma:LiouvilleTaylor} takes the form
\begin{equation*}
\begin{gathered}
\betaW = \betaWDLift - ydB + A_{y}dx + (B + B_{x}x + B_{y}y)dy + \beta_{W, \hot},\\
B = \betaW(\eta_{y}), \quad B_{x} - A_{y} = 1,\\
d\betaW = d\betaWDLift + dx \wedge dy + \omega_{\hot},\\
X_{\betaW} = X_{\betaWDLift} + B\partial_{x} + X_{\betaW, \hot},\\
\beta_{W, \hot} \in \bigO(r^{2}), \quad \omega_{\hot} \in \bigO(r), \quad X_{\betaW, \hot} \in \bigO(r),\\
F_{W} = F_{\Wdivisor} + F_{x}x + \bigO(r^{2}), \quad F_{x} = 1 - \betaWD(X_{F_{V}}),
\end{gathered}
\end{equation*}
where $B$ is computed as in Equation \eqref{Eq:PhiTaylor}. Moreover, given a zero $\zeta \in \Wdivisor$ of $\betaWD$ we have equivalences
\begin{equation*}
\begin{aligned}
\Stable(\Stable(\zeta, X_{\betaWD}), X_{\betaW}) &:= \{ (x, y, z)\ :\ \exists T \geq 0, \ \Flow^{T}_{X_{\betaW}} \in \Stable(\zeta, X_{\betaWD})\} \\
&= \{ (x, 0, z)\ :\ x < 0, \ z \in \Stable(\zeta, X_{\betaWD})\},\\
\Unstable(\Unstable(\zeta, X_{\betaWD}), X_{\betaW}) &:= \{ (x, y, z)\ :\ \exists  T \leq 0, \ \Flow^{T}_{X_{\betaW}} \in \Unstable(\zeta, X_{\betaWD})\} \\
&= \{ (x, 0, z)\ :\ x > 0, \ z \in \Unstable(\zeta, X_{\betaWD})\}.
\end{aligned}
\end{equation*}
\end{lemma}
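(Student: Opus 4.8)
The plan is to obtain Lemma \ref{Lemma:PsiTaylor} by feeding two structural features of the flow coordinates $\Psi(x,y,z) = \Flow^{y}_{\eta_{y}}\circ \Flow^{x}_{\eta_{x}}(z)$ into the general expansion of Lemma \ref{Lemma:LiouvilleTaylor}. Since $y$ is the parameter of the last flow applied, $\partial_{y} = \eta_{y}$ holds at \emph{every} point of the $\Psi$-neighborhood; by contrast $\partial_{x} = (\Flow^{y}_{\eta_{y}})_{\ast}\eta_{x}$ agrees with $\eta_{x}$ only along the slice $\{y = 0\}$. First I would record the consequences for the one-form. By Equation \eqref{Eq:PhiTaylor} we have $A = \betaW(\eta_{x}) = 0$ identically, so the $-x\,dA$ term of Lemma \ref{Lemma:LiouvilleTaylor} drops out. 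For the coefficient of $dx$, note that along $\{y = 0\}$ one has $\betaW(\partial_{x}) = \betaW(\eta_{x}) = 0$; hence this coefficient, viewed as a function on the neighborhood, vanishes on $\{y = 0\}$ and is therefore divisible by $y$, killing its constant and $x$-linear parts and leaving only $A_{y}\,y\,dx + \beta_{W,\hot}$. The coefficient of $dy$ is $\betaW(\partial_{y}) = \betaW(\eta_{y})$ on the whole neighborhood, with $1$-jet $B + B_{x}x + B_{y}y$, and the relation $B_{x} - A_{y} = 1$ as well as the form of $d\betaW$ are inherited verbatim. The Liouville field expansion is then immediate: substituting $A = 0$ into $X_{\betaW} = X_{\betaWDLift} + B\partial_{x} - A\partial_{y} + X_{\betaW,\hot}$ gives the stated formula. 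Finally, $dF_{W}(\eta_{y}) = 0$ together with $\partial_{y} = \eta_{y}$ shows $F_{W}$ is independent of $y$, while $dF_{W}(\eta_{x}) = 1 - \betaWD(X_{F_{\Wdivisor}})$ from \eqref{Eq:PhiTaylor} supplies $F_{x}$; this is the only place I would invoke the rescaling \eqref{Eq:Bbound}, which guarantees $B, F_{x} > 0$.

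The substantive part is the identification of the stable and unstable sets, and here I would abandon the $\Psi$-coordinates in favour of the straightened coordinates of the previous subsection, in which $\WdivisorLift = \{s = t = 0\}$, $\betaW = e^{s + F_{\Wdivisor}}(dt + \betaWD)$, and $X_{\betaW} = \partial_{s}$, so that $\Flow^{T}_{X_{\betaW}}(s,t,z) = (s + T, t, z)$ is a pure translation. In these coordinates a point lies in $\Stable(\Stable(\zeta, X_{\betaWD}), X_{\betaW})$ precisely when some forward translate meets $\{s = t = 0,\ z \in \Stable(\zeta, X_{\betaWD})\}$, i.e. exactly when $t = 0$, $z \in \Stable(\zeta, X_{\betaWD})$, and $s \leq 0$. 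It then remains to transport this set through $\Psi$. Because $\partial_{s}$ and $X_{\betaWD}$ commute, $\Flow^{x}_{\eta_{x}}(z) = (s = x,\ t = 0,\ \Flow^{-x}_{X_{\betaWD}}(z))$ for $z \in \Wdivisor$; thus the slice $\{y = 0\}$ is identified with $\{t = 0\}$, and flow-invariance of $\Stable(\zeta, X_{\betaWD})$ converts the condition $z \in \Stable(\zeta, X_{\betaWD})$ into the same condition on the base point. This yields the claimed product description $\{(x,0,z) : x < 0,\ z \in \Stable(\zeta, X_{\betaWD})\}$, and the unstable statement is the time-reversed mirror, using backward flow and $x > 0$.

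The main obstacle I anticipate is conceptual rather than computational: $\WdivisorLift$ is \emph{not} $X_{\betaW}$-invariant, since the field carries a nonzero $B\,\partial_{x}$ component along it, so one cannot simply declare the intrinsic stable manifolds of $\zeta$ inside $\WdivisorLift$ to be stable sets in $W$. The resolution, and the reason for passing to the straightened coordinates, is that although $\WdivisorLift$ is not invariant, the thicker slice $\{y = 0\} = \{t = 0\}$ is invariant, and on it the dynamics split as a translation in $x = s$ times the reparametrized Liouville flow of $X_{\betaWD}$ in $z$; pinning down this splitting, and the commutation $[\partial_{s}, X_{\betaWD}] = 0$ that makes $\Psi$ explicit on $\{y = 0\}$, is the crux. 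I would also take care to run the identifications inside the genuinely embedded $\Psi$-neighborhood, restricting to radii small enough that $\Psi$ is a diffeomorphism and the terms controlled by \eqref{Eq:Bbound} cannot reverse the sign of the $x$-dynamics.
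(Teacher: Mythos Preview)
Your proposal is correct and follows essentially the same route as the paper: you verify $A_{x}=0$ by restricting to the slice $\{y=0\}$ where $\partial_{x}=\eta_{x}$ and $\betaW(\eta_{x})=0$, and you identify the stable and unstable sets by passing to the straightened $(s,t)$ coordinates where $X_{\betaW}=\partial_{s}$, exactly as the paper does. Your version is more explicit---in particular your discussion of the transport $\Flow^{x}_{\eta_{x}}(z)=(x,0,\Flow^{-x}_{X_{\betaWD}}(z))$ and the flow-invariance of $\Stable(\zeta,X_{\betaWD})$, and your observation that $dF_{W}(\eta_{y})=0$ globally forces $F_{W}$ to be $y$-independent, are details the paper leaves implicit---but the underlying argument is the same.
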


The last statement will later help us to understand the Weinstein structure on our cobordism. For the proof, we need to verify this last statement and see that $A_{x} = 0$. Both follow from restricting attention to $\Psi|_{y=0}$ and noting that in the $\disk_{s, t} \times \Wdivisor$ coordinate system
\begin{equation*}
\begin{aligned}
\Stable(\Stable(\zeta, X_{\beta_{0}}), X_{\betaW}) &= \{ (s, 0, z)\ :\ s \leq 0,\ z \in \Stable(\zeta, X_{\beta_{0}})\},\\
\Unstable(\Unstable(\zeta, X_{\beta_{0}}), X_{\betaW}) &= \{ (s, 0, z)\ :\ s \leq 0,\ z \in \Unstable(\zeta, X_{\beta_{0}})\}.
\end{aligned}
\end{equation*}

\subsection{Modifications of $\betaW$ in a blown-up neighborhood}

Continuing the analysis of the preceding subsection, we now deform $\betaW$ in the complement of $\Wdivisor$ inside of the $\Psi$ coordinate system. Consider $\betaW$ as in the above lemma and cutoff functions $h^{+}_{\epsilon}(r), h^{-}_{\epsilon}(r)$ to be specified. Define $\betaW^{\epsilon, \delta}$ as
\begin{equation}\label{Eq:FiberSumBetaDef}
\begin{aligned}
\betaW^{\epsilon, \delta} &= \betaWDLift - ydB + Bdy + \delta h^{-}_{\epsilon}d\theta + h^{+}_{\epsilon} \left(A_{y}ydx + (B_{x}x + B_{y}y)dy + \beta_{W, \hot}\right),\\
d\betaW^{\epsilon, \delta} &= d\betaWDLift + \left( \frac{\delta}{r} \frac{\partial h^{-}_{\epsilon}}{\partial r} + h^{+}_{\epsilon}\right)dx \wedge dy\\
&+ h^{+}_{\epsilon}d\beta_{W, \hot} + \frac{\partial h^{+}}{\partial r}dr \wedge \left(A_{y}ydx + (B_{x}x + B_{y}y)dy + \beta_{W, \hot}\right).
\end{aligned}
\end{equation}
Here we have used $dh^{\pm} = \frac{\partial h^{\pm}}{\partial r}dr = r^{-1}\frac{\partial h^{\pm}}{\partial r}(xdx + ydy)$.

We choose our cutoff functions to be of the form
\begin{equation*}
h^{\pm}_{\epsilon} = h^{\pm}_{1}\left(\frac{r}{\epsilon}\right)
\end{equation*}
where $h^{\pm}_{1}$ are smoothings of the piece-wise linear functions
\begin{equation*}
h^{-, \square}_{1}(r) = \begin{cases}
r & r < 1,\\
2-r & r \in[1,2],\\
0 & r > r
\end{cases}\quad
h^{+, \square}_{1}(r) = \begin{cases}
0 & r < 1,\\
r - 1 & r \in [1, 2],\\
1 & r > 2.
\end{cases}
\end{equation*}
We require that the smoothings $h^{\pm}_{1}$ of the $h^{\pm, \square}_{1}$ are such that
\be
\item $h^{-}(r) = r$ for $r < 1$ and $h^{-}(r) = 0$ for $r > 2$ and
\item $h^{+}$ is an increasing function with $h^{+}(r) = 0$ for $r < 1$ and $h^{+}(r) =1$ for $r > 2$.
\ee

Then $\betaW^{\epsilon, \delta}$ is defined over the compactification
\begin{equation*}
\overline{\disk^{\ast}_{x, y}} \times \Wdivisor = [0, 2\epsilon)_{r} \times \Circle_{\theta} \times \Wdivisor
\end{equation*}
of $(\disk_{x, y} \times \Wdivisor) \setminus \Wdivisor$. Since $\frac{\partial h^{+}}{\partial r}$ is bounded by a constant and $\norm{dr} = 1$ (with respect to the metric $dx^{\otimes 2} + dy^{\otimes 2}$) we can collect the terms in the last line of Equation \eqref{Eq:FiberSumBetaDef} as a higher-order-term to obtain
\begin{equation*}
d\betaW^{\epsilon, \delta} = d\betaWDLift + \left( \frac{\delta}{r} \frac{\partial h^{-}_{\epsilon}}{\partial r} + h^{+}_{\epsilon}\right)dx \wedge dy + \omega^{\epsilon, \delta}_{\hot}, \quad \omega^{\epsilon, \delta}_{\hot} \in \bigO(r).
\end{equation*}
Looking at the graph of the coefficient function for $dx \wedge dy$ it follows that for each $\epsilon > 0$, $d\betaW^{\epsilon, \delta}$ is symplectic for $\epsilon > 0$ sufficiently small.

Now we describe the Liouville field of $\beta^{\epsilon, \delta}_{\hot}$ along $\{r < \epsilon\}$. Along this subset we have precise expressions,
\begin{equation}\label{Eq:BetaModifiedNearZero}
\begin{aligned}
\betaW^{\epsilon, \delta} &= \betaWDLift  - ydB + Bdy + \delta rd\theta\\
&= \betaWDLift - r\sin dB + B\sin dr + B\cos rd\theta + \delta rd\theta\\
d\betaW^{\epsilon, \delta} &= d\betaWDLift + \delta dr\wedge d\theta,\\
X_{\betaW^{\epsilon, \delta}} &= X_{\betaWDLift} + r\sin X_{B}^{d\betaWDLift} + r\partial_{r} + B \delta^{-1}\left(r\cos \partial_{r} - \sin \partial_{\theta} \right).
\end{aligned}
\end{equation}
Here $\theta$ is the argument of each $\cos, \sin$ and $X^{d\betaWDLift}_{B}$ is the Hamiltonian vector field of a function $B \in \Cinfty(\Wdivisor)$ with respect to $d\betaWDLift$.

\subsection{Patching neighborhoods together}

To define the cobordism of Theorem \ref{Thm:FiberSumCobordism}, we seek to patch two $\overline{\disk^{\ast}}\times \Wdivisor$ neighborhoods together along an overlapping region. Define a radius $\epsilon$ annulus and associated half-annuli by
\begin{equation*}
\annulus = [-\epsilon, \epsilon]_{p} \times \Circle_{q}, \quad \annulus^{\pm} = \{ \pm p > 0\} \subset \annulus.
\end{equation*}
Define embeddings
\begin{equation*}
\Theta^{\pm}: \annulus^{\pm} \times \Wdivisor \rightarrow \overline{\disk^{\ast}} \times \Wdivisor, \quad (p, q, z) \mapsto (r=\pm p, \theta=\pm q, z)
\end{equation*}
for which we compute
\begin{equation}\label{Eq:BetaExtentionOverAnnulus}
\begin{aligned}
(\Theta^{\pm})^{\ast}\betaW^{\epsilon, \delta} &= \betaWDLift - p\sin dB + B\sin dp + B\cos pdq + \delta qdq.
\end{aligned}
\end{equation}
Here $q$ is the argument of each $\cos$ and we have used that $\cos$ is even to make all of the signs agree. Since the expression is independent of $\pm$ sign, $\betaW^{\epsilon, \delta}$ extends smoothly to a Liouville form over $\annulus \times \Wdivisor$.

\begin{defn}
By attaching $\annulus \times \Wdivisor$ to $W \setminus (\Wdivisor^{l} \sqcup \Wdivisor^{r})$ we define our cobordism $W_{\#}$ and write $\beta_{\#}$ for the Liouville form, which restricts to $(\Theta^{\pm})^{\ast}\betaW^{\epsilon, \delta}$ along the overlapping region.
\end{defn}

\begin{figure}[h]
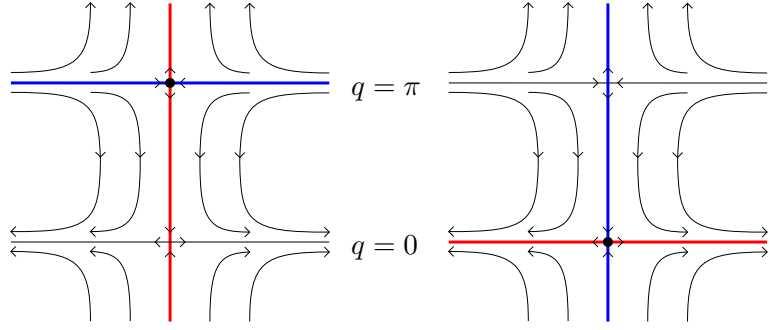

\begin{overpic}[scale=.5]{stable_unstable_vf.eps}
\put(45, 9){$q=0$}
\put(45, 30){$q=\pi$}
\end{overpic}
\caption{Two depictions of the vector field $p\partial_{p} + \delta^{-1}(p\cos \partial_{p} - \sin \partial_{q})$ for $\delta > 0$ small. On the left, the zero $(0, \pi)$ is indicated as a black dot with its stable manifold in blue and the closure of its unstable manifold in red. On the right, the closure of the stable manifold and unstable manifold of $(0, 0)$ are similarly highlighted.}
\label{Fig:StableUnstableVF}
\end{figure}

To compute the Liouville vector field for $\beta_{\#}$ over $\annulus \times \Wdivisor$ we apply Equation \eqref{Eq:BetaModifiedNearZero}, to get
\begin{equation}\label{Eq:LiouvilleAnnulus}
\begin{aligned}
X_{\beta_{\#}} &= X_{\betaWDLift} + p\sin X_{B}^{d\betaWDLift} + p\partial_{p} + B\delta^{-1}\left(p\cos \partial_{p} - \sin \partial_{q} \right)
\end{aligned}
\end{equation}
where $q$ is now the argument of $\cos$ and $\sin$. We see that
\begin{equation}\label{Eq:BetaDeltaEpsilonZeros}
\left\{ (p, q, z)\ : \beta_{\#}|_{(p, q, z)}=0\right\} = \left\{ (0, 0, \zeta), (0, \pi, \zeta)\ :\ \betaWD|_{\zeta}=0\right\}.
\end{equation}
The projection of $X_{\beta_{\#}}$ is shown in Figure \ref{Fig:StableUnstableVF}.

\subsection{Lyapunov functions on the surgered cobordism}\label{Sec:LyapunovExtension}

While our construction of the Liouville form $\beta_{\#}$ is complete, we still need to discuss Lyapunov functions in the Weinstein case. For the remainder of the section assume that $(\Wdivisor, \betaWD)$ is Weinstein with Lyapunov function $F_{\Wdivisor}$. This implies that $F_{\Wdivisor}$ is Lyapunov for $\betaWD = e^{F_{\Wdivisor}}\betaWD$.

We must first extend the Weinstein structure over the overlapping region $\annulus \times \Wdivisor$. We recall from Lemma \ref{Lemma:PsiTaylor} that on $\disk_{x,y} \times \Wdivisor$ in the $\Psi$ coordinate system we have
\begin{equation*}
F_{W} = F_{\Wdivisor} + F_{x}x + F_{\hot}, \quad F_{x} = 1-\beta_{0}(X_{F_{\Wdivisor}}), \quad F_{\hot} \in \bigO(r^{2}).
\end{equation*}
The Lyapunov condition along $\{r = 0\}$ is determined by the $\bigO(r)$ expansion
\begin{equation}\label{Eq:FLyapunovInquality}
dF_{W}(X_{\betaW})|_{r = 0} = dF_{V}(X_{\betaWDLift}) + BF_{x} > 0.
\end{equation}
Here we get $>0$ due to the fact that both $dF_{W}$ and $X_{\betaW}$ are everywhere non-vanishing, since we are in a symplectization. Switching to polar coordinates and applying $\Theta^{\pm}$ we have
\begin{equation*}
(\Theta^{\pm})^{\ast}F_{W} = F_{\Wdivisor} + F_{x}|p| \cos + F_{\annulus, \hot}, \quad F_{\annulus, \hot} \in \bigO(p^{2})
\end{equation*}

The function does not extend smoothly over the annulus. Therefore we let $\rho_{1}^{+}(p), \rho_{1}^{0}$ be smoothings of $|p|$ with domain $\R$ such that there exists $C_{1}, C_{2} \in \R_{> 0}$ for which
\be
\item both are even functions with $\rho_{1}^{+}(p) = \rho^{0}_{1}(p)= |p|$ for $|p| > 1$,
\item both $\rho_{1}^{+}$ and $\rho^{0}_{1}$ have a unique minimum critical point at $0$,
\item restricting to each $p > 0$, the functions
\begin{equation*}
\rho^{+}_{1} - \rho^{-}_{0}\beta_{\Wdivisor}(X_{F_{\Wdivisor}}),\quad \frac{\partial \rho^{+}_{1}}{\partial p} - \frac{\partial \rho^{-}_{0}}{\partial p}\beta_{\Wdivisor}(X_{F_{\Wdivisor}})
\end{equation*}
in $\Cinfty(\Wdivisor)$ are bounded below point-wise by $C_{1}$.
\item $\rho_{1}^{+}(0) = C_{2}> 0$ and $\rho_{1}^{0}(0) = 0$.
\ee
Then we define
\begin{equation}\label{Eq:RhoProperties}
\begin{gathered}
\rho_{\epsilon}^{+}(p) = \epsilon\rho_{1}\left( \frac{p}{\epsilon}\right), \quad \rho_{\epsilon}^{0}(p) = \epsilon\rho_{1}^{0}\left( \frac{p}{\epsilon}\right)\\
\implies \sgn(p)\left(\rho^{+}_{1} - \rho^{-}_{0}\beta_{\Wdivisor}(X_{F_{\Wdivisor}})\right), \sgn(p)\left(\frac{\partial \rho^{+}_{1}}{\partial p} - \frac{\partial \rho^{-}_{0}}{\partial p}\beta_{\Wdivisor}(X_{F_{\Wdivisor}})\right) > \epsilon C_{1}, \quad \rho^{+}_{\epsilon}(0) = \epsilon C_{2}.
\end{gathered}
\end{equation}
Recalling that $F_{x} = 1 - \betaWD(X_{F_{\Wdivisor}})$ we define
\begin{equation}\label{Eq:LyapunovExension}
F_{\#}^{\epsilon} = F_{\Wdivisor} + (\rho_{\epsilon}^{+} - \rho_{\epsilon}^{0} \betaWD(X_{F_{\Wdivisor}}))\cos + h^{+}_{\epsilon}F_{\annulus, \hot}.
\end{equation}
Then for $|p| \leq \epsilon$,
\begin{equation*}
\begin{aligned}
F_{\#}^{\epsilon} &= F_{\Wdivisor} + (\rho_{\epsilon}^{+} - \rho_{\epsilon}^{0} \betaWD(X_{F_{\Wdivisor}}))\cos,\\
dF_{\#}^{\epsilon} &= dF_{\Wdivisor} + \cos d(\rho_{\epsilon}^{+} - \rho_{\epsilon}^{0} \betaWD(X_{F_{\Wdivisor}})) - (\rho_{\epsilon}^{+} - \rho_{\epsilon}^{0} \betaWD(X_{F_{\Wdivisor}})) \sin dq,\\
dF_{\#}^{\epsilon}|_{p=0} &= dF_{\Wdivisor} -\frac{\epsilon}{2} \sin dq.
\end{aligned}
\end{equation*}

Applying Equation \eqref{Eq:LiouvilleAnnulus}, we obtain
\begin{equation*}
\begin{aligned}
dF_{W}^{\epsilon}(X_{\betaW^{\epsilon, \delta}}) &= \left( dF_{\Wdivisor} - \rho_{\epsilon}^{0} \cos d(\betaWD(X_{F_{V}}))\right)\left(X_{\betaWDLift} +p\sin X^{d\betaWDLift}_{B} \right)\\
&+ p\left(1 + \delta^{-1} B\cos \right) \cos \left(\frac{\partial \rho_{\epsilon}^{+}}{\partial p} - \frac{\partial \rho_{\epsilon}^{0}}{\partial p}\betaWD(X_{F_{\Wdivisor}})\right)\\
&+ \delta^{-1}B\sin^{2}\left(\rho_{\epsilon}^{+} - \rho_{\epsilon}^{0} \betaWD(X_{F_{\Wdivisor}})\right)
\end{aligned}
\end{equation*}
which we seek to show is positive away from the zeros of $\beta_{\#}$ described in Equation \eqref{Eq:BetaDeltaEpsilonZeros}. From the definitions of $\rho_{\epsilon}^{+}$ and $\rho_{\epsilon}^{0}$, the third line above is below point-wise by $\epsilon\delta^{-1}\sin^{2}C_{1}$ where $C_{0}$ is the constant from Equation \eqref{Eq:Bbound}. Analyzing portions of the second line, we have
\begin{equation*}
p B\cos^{2}\left(\frac{\partial \rho_{\epsilon}^{+}}{\partial p} - \frac{\partial \rho_{\epsilon}^{0}}{\partial p}\betaWD(X_{F_{\Wdivisor}})\right) > |p|\cos^{2}C_{1}
\end{equation*}
from Equation \eqref{Eq:RhoProperties}. Therefore
\begin{equation*}
\begin{gathered}
dF_{W}^{\epsilon}(X_{\betaW^{\epsilon, \delta}}) > \frac{\epsilon}{\delta}(\sin^{2} + |p|\cos^{2}) + dF_{\Wdivisor}(X_{\betaWD}) + \hot\\
\hot = p\sin dF_{\Wdivisor}(X^{d\betaWDLift}_{B}) - \rho_{\epsilon}^{0}\cos d(\betaWD(X_{F_{\Wdivisor}}))\left( X_{\betaWDLift} + p\sin X^{d\betaWDLift}_{B}\right) + p \cos \left(\frac{\partial \rho_{\epsilon}^{+}}{\partial p} - \frac{\partial \rho_{\epsilon}^{0}}{\partial p}\betaWD(X_{F_{\Wdivisor}})\right)
\end{gathered}
\end{equation*}
With $\delta \ll \epsilon$ and using $\rho^{0}_{\epsilon}(0) = 0$ we see that the leading terms in $dF_{W}^{\epsilon}(X_{\betaW^{\epsilon, \delta}})$ dominate the higher order terms along the subset $\{ p \neq 0\}$. Then restricting attention to $\{ p=0\}$ we get
\begin{equation*}
dF_{W}^{\epsilon}(X_{\betaW^{\epsilon, \delta}})|_{p=0} = dF_{\Wdivisor}(X_{\betaWDLift}) + \frac{\epsilon}{2\delta}\sin^{2}B.
\end{equation*}
By the facts that $B$ is strictly positive and that $F_{\Wdivisor}$ is Lyapunov for $\betaWDLift$, $dF_{\#}^{\epsilon}(X_{\beta_{\#}})$ vanishes only at the zeros of $X_{\beta_{\#}}$ and is elsewhere positive. So $F^{\epsilon}_{\#}$ is Lyapunov for $\beta_{\#}$ and our cobordism is Weinstein.

\subsection{Stable and unstable manifolds of $X_{\beta_{\#}}$}\label{Sec:UnstableAnalysis}

We seek to understand the stable and unstable manifolds associated to the critical points $p=0$, $q\in \{0, \pi\}$, $\zeta \in \WdivisorLift$ of $X_{\beta_{\#}}$ where $\zeta$ is a zero of $X_{\betaWD}$. For notational simplicity we work under the assumption that there is a single such $\zeta$ so that we have two critical points
\begin{equation*}
\thicc{\zeta}^{-} = (0, \pi, \zeta), \quad \thicc{\zeta}^{+} = (0, 0, \zeta).
\end{equation*}

At the critical points, we use $F_{x}|_{\zeta}=1$ from Lemma \ref{Lemma:PsiTaylor} to compute their critical values as
\begin{equation*}
F^{\epsilon}_{\#}(\thicc{\zeta}^{\pm}) = F_{\Wdivisor}(\zeta) \pm \frac{\epsilon}{2}.
\end{equation*}
Looking at Equation \eqref{Eq:LiouvilleAnnulus}, we obtain
\begin{equation}\label{Eq:UnstableExplicit}
\begin{aligned}
\Stable(\thicc{\zeta}^{-}, X_{\beta_{\#}}) \cap \left(\annulus \times \Wdivisor \right) &= \{ (p, \pi, z)\ : z \in \Stable(X_{\betaWD})\}\\
\Unstable(\thicc{\zeta}^{-}, X_{\beta_{\#}}) \cap \left(\annulus \times \Wdivisor \right) &= \{ (0, q, z)\ : q\neq 0, z \in \Unstable(X_{\betaWD})\}\\
\Stable(\thicc{\zeta}^{+}, X_{\beta_{\#}}) \cap \left(\annulus \times \Wdivisor \right) &= \{ (0, q, z)\ : q\neq \pi, z \in \Stable(X_{\betaWD})\}\\
\Unstable(\thicc{\zeta}^{+}, X_{\beta_{\#}}) \cap \left(\annulus \times \Wdivisor \right) &= \{ (p, 0, z)\ : z \in \Unstable(X_{\betaWD})\}.
\end{aligned}
\end{equation}
That is, the stable and unstable manifolds intersect our overlap region $\annulus \times \Wdivisor$ in products of
\be
\item stable and unstable manifolds of the vector field $p(1 + \delta^{-1}\cos)\partial_{p} - \delta^{-1}\sin \partial_{q}$ with
\item the stable and unstable manifolds of $X_{\betaWD}$.
\ee
Figure \ref{Fig:StableUnstableVF} provides all of the intuition here. It follows that
\begin{equation*}
\ind (\zeta^{\pm}) = \ind (\zeta) + 1.
\end{equation*}

So we see that the convex end of our cobordism is the result of two Weinstein handle attachments. First we attach a handle to the concave boundary $\Mxi$ along its intersection with $\Stable(\thicc{\zeta}^{-}, X_{\beta_{\#}})$, which is isotropic. Second, we attach a Weinstein handle to the level set $\{ F^{\epsilon}_{\#} = F_{\Wdivisor}(\zeta) \}$ along its intersection with $\Stable(\thicc{\zeta}^{-}, X_{\beta_{\#}})$. When $\zeta$ is of critical index $n$ in $\Wdivisor$, then $\thicc{\zeta}$ is of critical index $n+1$ in our cobordism and each Weinstein surgery described above has the effect of a contact $-1$ surgery on contact level sets of $F^{\epsilon}_{\#}$. The proof of Theorem \ref{Thm:FiberSumCobordism} is complete.

\subsection{Proof of Lemmas \ref{Lemma:FiberSumOT} and \ref{Lemma:FlexibleFiberSum}}

In the critical index case, we can alternatively describe the concave end of our cobordism as the result of a pair of contact anti-surgeries. Here the analysis is essential for our application to Theorem \ref{Thm:MainOT}. First we apply a contact anti-surgery to the intersection of the convex end of the cobordism along its intersection $\Leg^{+}$ with $\Unstable(\thicc{\zeta}^{+}, X_{\beta_{\#}})$. Then we apply a contact anti-surgery to the intersection of the level set $\{ F^{\epsilon}_{\#} = F_{\Wdivisor}(\zeta) \}$ along its intersection $\Leg^{-}$ with $\Unstable(\thicc{\zeta}^{+}, X_{\beta_{\#}})$. The $\Leg^{\pm}$ are Legendrian spheres.

To get an understanding of $\Leg^{+}$, we observe that $\{\pm p \geq 0\} \subset \Unstable(\thicc{\zeta}^{+}, X_{\beta_{\#}})$, as described above, extends beyond its intersection with $\annulus \times \Wdivisor$ as the $\Unstable(\Unstable(\zeta, X_{\betaWD}), X_{\betaW})$ associated to $\Wdivisor_{i} \subset M$, described in the end of Lemma \ref{Lemma:PsiTaylor}. This is because when switching from polar to Cartesian coordinates, $\{ \pm p > 0, q=0 \}$ are sent to $\{ x > 0, y=0\}$ in the $\disk_{x, y} \times \Wdivisor$ coordinate systems. The $\Unstable(\Unstable(\zeta, X_{\betaWD}), X_{\betaW})$ associated to the $\Wdivisor_{i} \subset \Mxi$ intersect the convex end of our cobordism -- a fiber sum along the $\divSet_{i}^{+}$ -- in the $\Unstable(\zeta, X_{\betaWD}) \subset \Wdivisor_{i}$, which we can view as being contained in the positive end of our cobordism. Since we are assuming that at least one of the $\Unstable(\zeta, X_{\betaWD})$ is stablilized, then $\Leg^{+}$ will be as well. Indeed a stabilized chart is assumed to be disjoint from the positive boundary $\divSet^{+}_{i}$ of one of the $\Wdivisor_{i}$, which misses the fiber sum surgery locus. So we can view the stabilized chart as being contained in the positive boundary of our cobordism.

The above discussion entails that the contact level set $\{ F^{\epsilon}_{\#} = F_{\Wdivisor}(\zeta) \}$ of our cobordism is overtwisted. Since the convex end of our cobordism $\Mxi$ is obtained from a contact anti-surgery on this level set, we conclude that $\Mxi$ is overtwisted as well from Lemma \ref{Lemma:PlusOneSurgeryPreservesOT}.

The proof of Lemma \ref{Lemma:FlexibleFiberSum} follows from an identical analysis of stable manifolds of the critical points of $F_{\#}$ on $W_{\#}$ using Equation \eqref{Eq:UnstableExplicit}.

\subsection{Proof of Theorem \ref{Thm:FiberSumOT}}

Assume the hypothesis of Theorem \ref{Thm:FiberSumOT}. Using Lemma \ref{Lemma:HandleLiouvilleExtension} we can assume that $\phi_{0}\divSet$ is the convex boundary of a Weinstein hypersurface $\Wdivisor_{0}$ with a single $\ind=n$ handle $H_{n}$ which is stabilized in the complement of $\phi_{1}\divSet$. Find any handle in the complement of $\Wdivisor_{0}$ giving Weinstein surgery data for $\phi_{1}\divSet$ and compatible with $\framing_{1}$ so that we can realize $\phi_{1}\divSet$ as the concave boundary of some $\Wdivisor_{1} \subset M$.

The $\Wdivisor_{i}$ both have the same Weinstein structures so that we can apply Theorem \ref{Thm:FiberSumCobordism} to find a Weinstein cobordism whose concave boundary is the fiber sum $(M_{\#}, \xi_{\#})$ along the $(\phi_{i}\divSet, \framing_{i})$ and whose convex boundary is the fiber sum along the $\partial^{+}\Wdivisor_{i}$. Since the handle $H_{n}$ is stabilized, Lemma \ref{Lemma:FiberSumOT} tells us that $(M_{\#}, \xi_{\#})$ is overtwisted, completing the proof.

\section{Proofs of the main theorems}\label{Sec:MainProofs}

Here we use the results we've collected throughout the body of the article to prove the theorems stated in the introduction. To start, we'll need an explicit description of the fiber sum of a contact manifold along a pair of unknots.

\subsection{Fiber sums of standard unknots}\label{Sec:FiberSumUnknots}

Consider disjoint Weinstein embeddings $\phi_{1},\phi_{2}$ of the standard Weinstein disk into a connected $\Mxi$ of $\dim=2n+1$. In the notation of \S \ref{Sec:FiberSum}, $\divSet^{-}_{i} = \emptyset$, the $\divSet_{i}^{+}$ are standard unknots by definition, and $F_{\Wdivisor}$ has a single critical point of $\ind = 0$. To obtain such a disjoint pair, we may as well start from a single $(\Wdivisor, \beta) = (\disk^{2n}, \beta_{std}) \subset \Mxi$ and take the push-off $\Wdivisor_{\uparrow}$ for the second copy as in Lemma \ref{Lemma:DisjointUnknots}.

For the cobordism $(W_{\#}, \beta_{\#})$ of Theorem \ref{Thm:FiberSumCobordism}, we get $(M^{-}_{\#}, \xi_{\#}^{-})=\Mxi$ for the concave boundary and $(M^{+}_{\#}, \xi^{+}_{\#})$ -- the contact fiber sum along the disjoint copies of $\unknot$ -- for the convex boundary. The Weinstein cobordism is obtained by attaching a pair of $\ind = 1$ Weinstein handles to a finite symplectization of $\Mxi$. Since $M$ is connected, we can assume that the attaching loci of the handles -- which are arbitrarily small Darboux disks -- are contained in a Darboux ball in $\Mxi$ to obtain
\begin{equation}\label{Eq:FiberSumUnknots}
(M^{+}_{\#}, \xi^{+}_{\#}) = \Mxi \# 2(\Circle \times \sphere^{2n}, \xi_{std}).
\end{equation}
Here $(\Circle \times \sphere^{2n}, \xi_{std})$ is the standard contact structure, which can be seen as the boundary of the Weinstein manifold obtained by rounding the corners of $([-1, 1]_{p}\times \Circle_{q} \times \disk^{2n}_{z=(x + iy)}, pdq + \beta_{std}, p^{2} + \norm{z}^{2})$.

\begin{lemma}\label{Lemma:KillOneHandleOT}
In the above notation, there is a Weinstein cobordism $(W, \betaW, F_{W})$ with concave boundary $(M^{+}_{\#}, \xi^{+}_{\#})$ and convex boundary $\Mxi$. When $\dim M \geq 5$ this cobordism is subcritical, so if $(M^{+}_{\#}, \xi^{+}_{\#})$ is overtwisted, then $\Mxi$ is overtwisted as well by Lemma \ref{Lemma:SubcriticalPreservesOT}.
\end{lemma}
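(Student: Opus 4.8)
The plan is to realize $W$ as a pair of subcritical handle attachments that cancel the handles used to build $(M^{+}_{\#}, \xi^{+}_{\#})$ out of $\Mxi$. Recall from the discussion preceding \eqref{Eq:FiberSumUnknots} that $(M^{+}_{\#}, \xi^{+}_{\#}) = \Mxi \# 2(\Circle \times \sphere^{2n}, \xi_{std})$ is obtained from $\Mxi$ by attaching two $\ind = 1$ Weinstein handles $h^{(0)}, h^{(1)}$ to a finite symplectization of $\Mxi$, with attaching loci arbitrarily small Darboux disks inside a single Darboux ball. I would construct $W$ by attaching two $\ind = 2$ Weinstein handles $g^{(0)}, g^{(1)}$ to the convex end of a finite symplectization $[0,1] \times (M^{+}_{\#}, \xi^{+}_{\#})$, choosing the attaching data so that each $g^{(i)}$ forms a cancelling pair with the corresponding $h^{(i)}$. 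Then $W$ has concave boundary $(M^{+}_{\#}, \xi^{+}_{\#})$ by construction, while the cancellation forces its convex boundary to be $\Mxi$.

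To specify the attaching data, observe that each summand, realized as the boundary of the subcritical domain obtained by rounding the corners of $([-1,1]_{p} \times \Circle_{q} \times \disk^{2n}_{z}, pdq + \beta_{std})$, contains the isotropic core circle $\Circle_{q} \times \{z_{0}\}$, where $z_{0} \in \sphere^{2n}$ is the point $\{p = 0,\ \norm{z} = 1\}$. Since $(pdq + \beta_{std})(\partial_{q}) = p$ vanishes at $p = 0$, this circle is tangent to $\xi_{std}$ and hence isotropic, and it meets the belt sphere $\{q_{0}\} \times \sphere^{2n}$ of the $\ind = 1$ handle $h^{(i)}$ transversally in a single point. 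I would attach $g^{(i)}$ along an isotropic representative of this circle. By Weinstein handle cancellation \cite{SteinToWeinstein}, the transverse single intersection of the attaching sphere of $g^{(i)}$ with the belt sphere of $h^{(i)}$ implies that the composite cobordism from $\Mxi$ through $h^{(i)}$ and then $g^{(i)}$ is Weinstein-homotopic to a trivial cobordism. Performing this for $i = 0, 1$ along disjoint attaching regions shows that the convex boundary of $W$ is indeed $\Mxi$.

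Finally, the critical points of $F_{W}$ are the two centers of the $g^{(i)}$, each of Morse index $2$, while $\half \dim W = n+1$. Thus $(W, \betaW, F_{W})$ is subcritical exactly when $2 < n+1$, i.e.\ $n \geq 2$, i.e.\ $\dim M \geq 5$; and subcritical cobordisms are flexible. Applying Lemma \ref{Lemma:SubcriticalPreservesOT} with concave boundary $(M^{+}_{\#}, \xi^{+}_{\#})$ and convex boundary $\Mxi$ then propagates overtwistedness upward, as claimed. The main obstacle is the cancellation step: I must check that the isotropic core circle, with the framing inherited from the summand, is genuinely (isotopic to) the attaching sphere of a Weinstein handle cancelling $h^{(i)}$, so that the hypotheses of the cancellation result in \cite{SteinToWeinstein} are met. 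This is precisely where subcriticality enters a second time, since in the subcritical range isotropic circles obey an $h$-principle and the single transverse intersection with the belt sphere is enough to realize the geometric cancellation.
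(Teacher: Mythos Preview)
Your proposal is correct and follows essentially the same route as the paper: both attach two $\ind=2$ Weinstein handles to $(M^{+}_{\#},\xi^{+}_{\#})$ along the isotropic core circles of the $\Circle\times\sphere^{2n}$ summands, and observe that index $2$ is subcritical once $n\geq 2$. The only difference is packaging: the paper computes the effect of the handle attachment directly by noting that $(\Circle\times\sphere^{2n},\xi_{std})$ is the boundary of a tubular neighborhood of a circle in $(\disk^{2n+2},\beta_{std})$, so one $\ind=2$ handle returns $\stdSphere{2n+1}$, whereas you phrase the same computation as a cancellation of the $\ind=1$ handle $h^{(i)}$ by $g^{(i)}$ via \cite{SteinToWeinstein}. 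The direct computation in the paper sidesteps the framing/$h$-principle verification you flag at the end, so if you want to lighten your argument you can simply replace the cancellation step by that identification.
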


\begin{proof}
This follows immediately from the fact that there is a Weinstein cobordism having $(\Circle \times \sphere^{2n}, \xi_{std})$ as its concave boundary and $\stdSphere{2n+1}$ as its convex boundary, and that such a cobordism can be assumed subcritical when $n \geq 2$. We can view $(\Circle \times \sphere^{2n}, \xi_{std})$ as the boundary of a tubular neighborhood of a circle in $(\disk^{2n+2}, \beta_{std})$. Attaching a $\ind=2$ Weinstein handle, which is subcritical in the $\dim M \geq 5$ setting, we get back $\stdSphere{2n+1}$.
\end{proof}

\subsection{Unknots cannot be stabilized in tight $\Mxi$}

We are now ready to complete the proof of Theorem \ref{Thm:MainOT}. Provided the results in \S \ref{Sec:Stabilization}, it remains to show that if $\unknot \subset \Mxi$ is a stabilization, then $\Mxi$ is overtwisted. So we suppose that $\unknot$ is a stabilization.

Applying Lemma \ref{Lemma:HandleLiouvilleExtension}, we can find a Liouville hypersurface $(\Wdivisor, \betaWD)$ with $\partial^{-}\Wdivisor = \unknot$ having a single critical point of index $n$ whose unstable manifold $\Unstable_{n}$ is stabilized in a cusp chart $\disk^{2n+1}$ in the complement of $\divSet^{+} = \partial^{+}\Wdivisor$. We also have a ``destabilized'' $(\Wdivisor', \betaWD)$ for which $\partial^{-}\Wdivisor' = \unknot$ with $\Wdivisor'$ and $\Wdivisor$ agreeing outside of the cusp chart $\disk^{2n+1}$. The destabilized $\Wdivisor'$ also has a single critical point of index $n$ with associated unstable manifold $\Unstable'$. So $\Unstable$ is a stabilization of $\Unstable'$ within a cusp chart.

\begin{figure}[h]
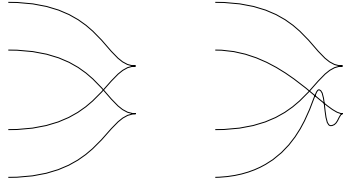

\begin{overpic}[scale=.4]{tiny_stab.eps}
\end{overpic}
\caption{On the left, a dimensionally-reduced depiction of $\Unstable'$ (bottom) and $\Unstable'_{\uparrow}$ (top) shown as $n=1$ Legendrian fronts. On the right, analogous depictions of $\Unstable$ (bottom) and $\Unstable'_{\uparrow}$ (top).}
\label{Fig:TinyStab}
\end{figure}

Consider a push-off $\Wdivisor'_{\uparrow}$ of the destabilized $\Wdivisor'$ with associated unstable manifold $\Unstable'_{\uparrow}$. Then $\Unstable'$ and $\Unstable'_{\uparrow}$ will be disjoint. Inside using the model for stabilization from \S \ref{Sec:LegStabilization} and using the $(z, p, q)$ coordinated there, we will have when restricting a sufficiently small neighborhood of the cusp, $\inf(z|_{\Unstable'_{\uparrow}}) < \sup(z|_{\Unstable'})$. So we can assume that $\Unstable$ is obtained by performing a Legendrian stabilization within a cusp chart contained in $\disk^{2n+1}$ which is disjoint from $\Unstable'_{\uparrow}$. By making the $\ind=n$ Weinstein handles of $\Wdivisor'_{\uparrow}$ and $\Wdivisor$ thin, we can then assume that $\Unstable$ is stabilized in the complement of the union of $\divSet^{+}$ with $\Wdivisor'_{\uparrow}$. Figure \ref{Fig:TinyStab} provides a schematic.

Applying Lemma \ref{Lemma:DisjointUnknots} and Equation \eqref{Eq:FiberSumUnknots}, the fiber connect sum of $\unknot$ with $\unknot' = \partial^{-}\Wdivisor'$ is $\Mxi \# 2(\Circle \times \sphere^{2n}, \xi_{std})$. By Theorem \ref{Thm:FiberSumOT}, this contact manifold is overtwisted. Applying Lemma \ref{Lemma:KillOneHandleOT}, $\Mxi$ is overtwisted as well, completing the proof of Theorem \ref{Thm:MainOT}.

\subsection{Liouville boundaries}\label{Sec:HypersurfaceProof}

Now we prove Theorem \ref{Thm:HypersurfaceNonSimple}. So we suppose that $(\Wdivisor, \betaWD)$ is a Liouville hypersurface in some $\Mxi$ with $\partial^{-}\Wdivisor = \emptyset$ and $\partial^{+}\Wdivisor = \divSet$, the contact divisor of interest. We seek to show that $\divSet$ cannot be a stabilization if $\Mxi$ is weakly fillable or has non-vanishing contact homology.

Take a second copy of $(M', \xi)$ of $\Mxi$ within which we have another copy $\divSet'$ of $\divSet$ bounding a second copy $\Wdivisor'$ of $\Wdivisor$. Theorem \ref{Thm:FiberSumCobordism} tells us that there is a Liouville cobordism $(W_{\#}, \beta_{\#})$ whose concave boundary is $\Mxi \sqcup \Mxi$ and whose convex boundary is a fiber sum along the framed contact divisors $(\divSet, \framing_{\Wdivisor}), (\divSet', \framing_{\Wdivisor'})$. Theorem \ref{Thm:FiberSumOT} tells us that if $\divSet$ is stabilized, then the convex boundary $\partial^{+}W_{\#}$ of the cobordism is overtwisted.

We first address contact homology, $CH$ \cite{BH:ContactDefinition, Pardon:Contact}, since the proof is very easy. If $\Mxi$ has non-vanishing contact homology $CH\Mxi \neq 0$ with $\Q$ coefficients, then
\begin{equation*}
CH(\Mxi \sqcup \Mxi) \simeq CH\Mxi^{\otimes 2} \neq 0.
\end{equation*}
The convex boundary of $W_{\#}$ must have non-vanishing $CH$ as well, since $W_{\#}$ induces a unital algebra morphism from the $CH$ of the convex boundary to the $CH$ of the concave boundary. On the other hand, if $\divSet$ was stabilized, then the $CH$ of the convex boundary would be zero, since the $CH$ of overtwisted contact manifolds vanishes by \cite{BvK:Stabilization}.

By \cite{LW:Torsion, NW:WeakFillings} symplectic fillability implies the non-vanishing of $CH$ with $\Q$ coefficients, and weak fillability guarantees the non-vanishing of $CH$ with fully twisted $\Q[H_{2}(M)]$ coefficients. Hence the case of $\Mxi$ (strongly) symplectically fillable has already been covered. Note that the argument of the preceding paragraph breaks down for twisted $\Q[H_{2}(M)]$ coefficients: Unless the inclusions $H_{2}(\partial^{\pm}W_{\#}) \rightarrow H_{2}(W_{\#})$ are isomorphisms, we won't be able to associate a unital algebra morphism $CH(\partial^{+}W_{\#}) \rightarrow CH(\partial^{-}W_{\#})$ to our cobordism when using twisted coefficients.

So we now assume that $\Mxi$ is weakly symplectically fillable. We'll establish that $\partial^{+}W_{\#}$ is weakly fillable using a Morse-theoretic approach. From this is will follow that $\partial^{+}W_{\#}$ is tight and so $\divSet$ is not stabilized by Theorem \ref{Thm:FiberSumOT}.

Choose a lifting function $F_{\Wdivisor}$ for $\Wdivisor$ and consider the involution $\tau$ of $M \sqcup M$ which swaps the factors of the disjoint union. This is clearly a contact transformation which extends to a Weinstein automorphism of the symplectization of $M$ sending the lift $\WdivisorLift$ of $\Wdivisor$ in the first factor to the lift in the second factor, and vice versa. We claim that $\psi$ extends to an involution $T$ of the Liouville cobordism $W_{\#}$. To do so, we note that it must take the form $T(p, q, z) = (-p, -q, z)$ in the ``overlapping region'' $[-\epsilon, \epsilon]_{p} \times \Circle_{q} \times \Wdivisor$ of the cobordism. According to Equation \eqref{Eq:BetaExtentionOverAnnulus} this involution preserves the Liouville form on $W_{\#}$. So $T$ is a Liouville isomorphism of $(W_{\#}, \beta_{\#})$ for which $T|_{\partial^{-}W_{\#}} = \tau$.

Take a weak filling $(W_{\downarrow}, \omega_{\downarrow})$ of $\Mxi$. Then $\tau$ extends over $W_{\downarrow} \sqcup W_{\downarrow}$ in the obvious fashion, giving us a symplectic automorphism of $(W_{\downarrow} \sqcup W_{\downarrow}, \omega_{\downarrow}\sqcup\omega_{\downarrow})$. We can then smoothly glue $\partial^{-}W_{\#}$ to $\partial^{+}W_{\downarrow}$ to obtain a smooth filling $W$ of $\partial^{+}W_{\#}$. Then $T$ is defined over all of $W$. The fixed point locus of $T$ is the set
\begin{equation*}
\Fix(T) = \{ p = 0, q \in \{0, \pi\}\} \subset [-\epsilon, \epsilon] \times \Circle \times \Wdivisor \subset W.
\end{equation*}

Even if $\Wdivisor$ is not Weinstein, we can assume that the lifting function $F_{\Wdivisor}$ on $\Wdivisor$ is Morse. Then analysis of \S \ref{Sec:LyapunovExtension} equips $W_{\#}$ with a Morse function $F_{\#}$ which extends the symplectization coordinate $s$ near $\partial^{-}W_{\#}$. By Equation \eqref{Eq:LyapunovExension}, this function is invariant under the involution $T$. The analysis of \S \ref{Sec:UnstableAnalysis} then describes the stable and unstable manifolds of $F_{\#}$ (even when $F_{\Wdivisor}$ is not a Lyapunov function) by replacing $X_{\betaWD}$ with $\grad F_{\Wdivisor}$ throughout if we assume that the gradient is defined using a Riemannian metric $g$ which is invariant under $\Psi$ and takes the form $g = dp^{\otimes 2} + dq^{\otimes 2} + g_{\Wdivisor}$ for a metric $g_{\Wdivisor}$ on $\Wdivisor$ along the overlapping region.

For each $\ind=m$ critical point $\zeta \in \Wdivisor$ of $F_{\Wdivisor}$, there are two critical points $\thicc{\zeta}^{\pm}$ of $F_{\#}$ at $(0, \pi, \zeta)$ and $(0, 0, \zeta)$ in the overlapping region $[-\epsilon, \epsilon]_{p} \times \Circle_{q} \times \Wdivisor$. So the critical points are all contained in $\Fix(T)$. From Equation \eqref{Eq:UnstableExplicit}, $T$ restricts to an orientation reversing diffeomorphism of the stable manifolds of these critical points.

It follows that $T$ induces an involution of each level set $F_{\#}^{-1}(s) \subset W_{\#} \subset W$ of $F_{\#}$, inducing an orientation reversing diffeomorphism of the intersection of regular level set with the aforementioned stable manifolds. These will be the attaching spheres $\sphere^{\ind(\zeta)}(\thicc{\zeta}^{\pm}, s)$ of a Morse handle-body decomposition of the cobordism. The stated dimensions of the spheres follows from $\ind(\thicc{\zeta}^{\pm}) = \ind(\zeta) + 1$.

Extending the collar neighborhood of $\partial W_{\downarrow}$ described in \S \ref{Sec:SympManifolds}, we can assume that the collar has the form $[0, C]_{s} \times M$ along which $\omega_{\downarrow} = d(e^{s}\alpha) + \omega_{M}$ with $\partial W_{\downarrow} = \{C\} \times M$. The important point here is that by making $C \gg 0$, the derivative of Liouville term $e^{s}\alpha$ dominates $\omega_{M}$ point-wise. When we attach the concave end of $W_{\#}$ to the convex end of $W_{\downarrow}$, $d(e^{s}\alpha)$ extends over $W_{\#}$ as $e^{C}d(\beta_{\#})$. If we can ensure that $\omega_{M}$ extend over $W_{\#}$, then we will have a symplectic form on all of $W$ by taking $C \gg 0$. 

The obstruction to extending $\omega_{M, 0} = \omega_{M}\sqcup \omega_{M}$ -- defined on $\partial^{-}W_{\#} = M \sqcup M$ -- to a closed form over all of $W_{\#}$ is determined entirely by the boundaries of the $\ind=3$ handles. We can always extend $\omega_{M, 0}$ to a closed $\omega_{M, 1}$ over the $1$-handles of $W_{\#}$, and can assume -- by averaging over $T$ -- that this extension is $T$-invariant. Then we extend to a closed $\omega_{M, 2}$ over the $2$-handles in a similar fashion. When we try to attach the $3$-handles, there is an obstruction $\bigO_{2}$ to extension given by
\begin{equation*}
\bigO_{2} = \int_{\sphere^{\ind(\zeta)}(\thicc{\zeta}^{\pm}, s)}\omega_{M, 2} \in \R.
\end{equation*}
The obstruction depends on a choice of orientation of each such sphere. Cohomological long exact sequences for the handle attachments tell us that an extension of $\omega_{M,2}$ to a closed $\omega_{M, 3}$ over the $3$-handles of $W_{\#}$ exists iff $\bigO_{2}=0$. Using the fact that $\Psi$ reverses orientations of our attaching spheres, we compute
\begin{equation*}
\bigO_{2} = \int_{\sphere^{\ind(\zeta)}(\thicc{\zeta}^{\pm}, s)}\Psi^{\ast}\omega_{M, 2} = \int_{-\sphere^{\ind(\zeta)}(\thicc{\zeta}^{\pm}, s)}\omega_{M, 2} = -\bigO_{2}.
\end{equation*}
so that the obstruction must vanish. There are no obstructions to extending over the handles of higher index.

So we have a closed $\omega_{\#} = \omega_{M, 2n}\in \Omega^{2}(W_{\#})$ which agrees with $\omega_{M}\sqcup\omega_{M}$ along its concave end. Therefore $e^{C}\beta_{\#} + \omega_{\#} \in \Omega^{2}(W_{\#})$ is a closed $2$-form extending $\omega_{\downarrow}\sqcup\omega_{\downarrow}$ defined along its concave boundary. For $C$ sufficiently large, this form is symplectic on $W_{\#}$ and will be fiber-wise symplectic on the contact plane field $\xi_{\#}^{+}$ of $\partial^{+} W_{\#}$. So following \cite{MNW13}, we have a weak symplectic filling of $(\partial^{+}W_{\#}, \xi_{\#}^{+})$  given by
\begin{equation*}
(W = W_{\#} \cup (W_{\downarrow} \sqcup W_{\downarrow}), \omega_{\#} \cup (\omega_{\downarrow} \sqcup \omega_{\downarrow})).
\end{equation*}
Here $\cup$ indicates union in all instances (rather than, say, a cup product).

Weak fillability implies tightness \cite{Gromov:JCurves, Klaus:Plastik}, so we conclude that if $\divSet$ is a contact divisor bounding a Liouville hypersurface in a weakly fillable $\Mxi$, then $\divSet$ cannot be stabilized. The proof of Theorem \ref{Thm:HypersurfaceNonSimple} is complete.

\subsection{Bindings of open books are not stabilized}

Now we prove Theorem \ref{Thm:BindingNonStabilized} whose first statement asserts that bindings of supporting books of contact manifolds are never stabilized.

Suppose that $\Mxi$ is presented as an (abstract) open book with Liouville page $(\Wdivisor, \betaWD)$ and monodromy $\phi \in \Symp(\Wdivisor, \partial \Wdivisor, d\betaWD)$. We view a single page $\Wdivisor$ as a Liouville hypersurface in $\Mxi$ whose boundary $\divSet$ is the binding of the open book. We seek to show that $\divSet$ is not stabilized.

As in the previous subsection, we consider two copies $\divSet, \divSet'$ of $\divSet$ in distinct copies of $\Mxi$, framed by pages of the open books. As described in \cite{Avdek:Liouville}, the fiber sum of $\divSet$ and $\divSet'$ defined using this framing is a bundle over $\Circle$ whose fibers are convex hypersurfaces $S$ with monodromy $S \rightarrow S$ determined by $\phi$. The convex hypersurfaces have positive and negative region equal to $(\Wdivisor, \beta)$ with their boundaries identified using $\Id_{\divSet}$ so long as we identify the pages of the open books using the identity map.

This convex hypersurface $S$ can be viewed as the boundary of the contactization $([-1, 1]_{t} \times \Wdivisor, \ker(dt + \betaWD))$ with its corners rounded \cite{Avdek:Liouville}. This contactization is tight, since it gives a neighborhood of a page of the open book with page $(\Wdivisor, \beta)$ and monodromy $\Id_{\Wdivisor}$, and such an open book is the boundary of the Liouville manifold obtained by rounding the corners of $(\disk^{2} \times \Wdivisor, xdy - ydx + \betaWD)$. Alternatively, one can use sutured $CH$ \cite{CGHH:Sutures} to establish tightness of a contactization. We conclude that $S$ has a tight neighborhood $\R \times S$ along which the contact structure is $\R$ invariant. This $\R \times S$ covers the bundle over $S$, so we conclude that the fiber sum over $\divSet, \divSet'$ with the specified framings is tight. If $\divSet$ was stabilized, then this fiber sum would be overtwisted by Theorem \ref{Thm:FiberSumOT}. So $\divSet$ cannot be stabilized and the first statement of Theorem \ref{Thm:BindingNonStabilized} is established.

Now we use this fact to show that any $\Mxi$ has infinitely many non-simple contact divisors. By \cite{Giroux:ContactOB, BHH:OB, Sackel:Handle}, every $\Mxi$ admits a supporting open book decomposition. Choose one and let $\divSet$ be the binding with page $\Wdivisor$ and monodromy $\phi$. We can positively stabilize the open book (cf. \cite{BHH:OB}) to obtain a new supporting open book as follows:
\be
\item The new page $\Wdivisor'$ is a boundary connected sum of $\Wdivisor$ and $\disk^{\ast}\sphere^{n}$, the latter equipped with its canonical Weinstein structure. This can be seen as the result of a Weinstein $1$-handle attachment to $\Wdivisor \sqcup \disk^{\ast}\sphere^{n}$.
\item The new monodromy is $\phi$ when restricted to $\Wdivisor \subset \Wdivisor'$ and a symplectic Dehn twist along the zero section of $\disk^{\ast}\sphere^{n} \subset \Wdivisor'$.
\ee

The new binding $\divSet'$ is topologically $\divSet\#(\sphere^{\ast}\sphere^{n})$. Since $H_{n-1}(\sphere^{\ast}\sphere^{n}, \Z/2)\simeq\Z/2$ -- generated by a cotangent sphere fiber -- it follows that
\begin{equation*}
\rank H_{n-1}(\divSet', \Z/2) = \rank H_{n-1}(\divSet, \Z/2) + 1.
\end{equation*}
Comparing these homology ranks using any number of stabilizations of our initial open book, we have infinitely many (topologically) inequivalent contact divisors. Using the fact that these bindings are not stabilized and taking contact stabilizations, eg. as in \S \ref{Sec:StandardStabilization}, we see that they are all non-simple, completing the proof of Theorem \ref{Thm:BindingNonStabilized}.

\subsection{Implicitly overtwisted divisors are stabilized in high dimensions}\label{Sec:OTDivisor}

We address Theorem \ref{Thm:OTDivisor} starting with the following prerequisite lemma.

\begin{lemma}
Suppose that $\divSet \subset \Mxi$ is a contact divisor such that $\dim M \geq 7$. If $\Leg \subset \Mxi$ is a Legendrian disk for which $\Leg \cap \divSet$ is a stabilized Legendrian sphere in $\MxiDivSet$, then $\Leg$ is Legendrian stabilized in the complement of $\divSet$.
\end{lemma}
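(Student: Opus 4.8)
The plan is to lift the looseness of $\Leg\cap\divSet$ inside $\MxiDivSet$ to a Murphy loose chart for $\Leg$ inside $\Mxi$ that avoids $\divSet$, by thickening a loose chart of $\Leg\cap\divSet$ in the direction normal to $\divSet$. First I would fix a point $x_{0}\in\Leg\cap\divSet$ lying in a loose chart of $\Leg\cap\divSet$ and set up a simultaneous normal form there. Since $\divSet$ is a contact divisor, its symplectic normal bundle $N=(\xi\cap T\divSet)^{\perp}\cap\xi$ (symplectic orthogonal with respect to $d\alpha$) has rank $2$. The disk $\Leg$ is Lagrangian in $\xi$, so $d\alpha$ vanishes on $T\Leg$; writing the collar direction of $\Leg$ transverse to $\partial\Leg=\Leg\cap\divSet$ as $v=v_{\divSet}+v_{N}$ along $\xi=(\xi\cap T\divSet)\oplus N$, the isotropy of $T\Leg$ forces $v_{\divSet}\in T(\Leg\cap\divSet)$, so after subtracting a tangential component $\Leg$ extends $\Leg\cap\divSet$ along a Lagrangian line of $N$. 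I would then produce Darboux coordinates $(z,q_{1},\dots,q_{n},p_{1},\dots,p_{n})$ on a ball $U\subset M$ with $\alpha=dz-\sum p_{i}dq_{i}$, $\divSet=\{q_{n}=p_{n}=0\}$, within which $\Leg\cap\divSet$ is the standard stabilized model in $(\R^{2n-1},\xi_{std})$ and $\Leg=(\Leg\cap\divSet)\times\{p_{n}=0,\ q_{n}\in[0,\epsilon)\}$.

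The technical heart of the argument, and the step I expect to be the main obstacle, is exactly this simultaneous normal form: a relative Weinstein--Moser argument straightening the Legendrian disk $\Leg$, which is tangent to $\divSet$ along its boundary with a clean excess-dimension-one intersection, into the product of $\Leg\cap\divSet$ with the chosen Lagrangian line of $N$. Once this is in place, I would recall that a Murphy loose chart for $\Leg\cap\divSet$ inside the $(2n-1)$-dimensional $\divSet$ has the form $Z\times\disk^{\,n-2}$ in a ball $\R^{3}\times\R^{2n-4}\subset\divSet$, where $Z\subset(\R^{3},\xi_{std})$ is a standard zig-zag and $\disk^{\,n-2}$ is a flat Legendrian disk. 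In the product coordinates this zig-zag persists on the whole slab $\{q_{n}\in[0,\epsilon)\}$ of $\Leg$.

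Next I would choose a small Darboux ball $\disk^{2n+1}\subset U$ centred at the point with $q_{n}=\epsilon/2$, $p_{n}=0$ lying over $x_{0}$. For sufficiently small radius this ball is disjoint from $\divSet=\{q_{n}=0\}$, and $\Leg$ meets it in $Z\times\disk^{\,n-1}$, where the flat factor $\disk^{\,n-1}=\disk^{\,n-2}\times(q_{n}\text{-arc})$ is a flat Legendrian disk in the complementary $\R^{2n-2}$. This is precisely a Murphy loose chart for $\Leg$; its width condition is inherited from that of the loose chart of $\Leg\cap\divSet$, since thickening in the flat $q_{n}$ direction only enlarges the flat factor. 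Inside this chart $\Leg$ is the stabilization of the Legendrian $\Leg'$ obtained by replacing $Z$ with a trivial sheet, so $\Leg$ is Legendrian stabilized in a cusp chart contained in $M\setminus\divSet$, which is the assertion.

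Finally, the hypothesis $\dim M\geq 7$, i.e. $n\geq 3$, enters exactly here: it guarantees $\dim(\Leg\cap\divSet)=n-1\geq 2$, so that Legendrian stabilization and the loose-chart model of \cite{Murphy:Loose} are available for $\Leg\cap\divSet$ in $\divSet$, and so that $\disk^{\,n-2}$ is a genuine flat factor that can be extended by the $q_{n}$ direction. For $n=2$ the boundary $\Leg\cap\divSet$ would be a one-dimensional Legendrian and the loose-chart construction degenerates, which is consistent with the expectation that the $\dim M=5$ case requires a separate treatment.
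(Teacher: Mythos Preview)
Your proposal is correct and follows essentially the same strategy as the paper: both arguments take a loose chart for $\partial\Leg$ inside $\MxiDivSet$, place $\Leg$ in a product normal form $\partial\Leg \times \{p_{n}=0,\ q_{n}\in[0,\epsilon)\}$ near $\divSet$ via a Weinstein neighborhood argument, and then observe that the loose chart thickened by the extra $q_{n}$-direction yields a loose chart for $\Leg$ in a slab disjoint from $\divSet$. The paper is more telegraphic---it writes the neighborhood as $\disk(\rho)\times N_{\divSet}$ with $\alpha=\alpha_{\divSet}+p\,dq$ and defers the final ``product of a loose chart with a zero section is loose'' step to \cite{Eliash:WeinsteinRevisited}---whereas you spell out the coordinates, the $Z\times\disk^{n-2}\to Z\times\disk^{n-1}$ thickening, and the inheritance of the width condition explicitly; but the underlying idea is identical.
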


\begin{proof}
We adapt the strategy of the proof of \cite[Lemma 4.4]{Eliash:WeinsteinRevisited} to the present context. So we will use loose charts \cite{Murphy:Loose}, rather than Legendrian stabilized charts, in contrast with the rest of this article.

Since $\MxiDivSet$ is overtwisted $\partial \Leg$ is a loose Legendrian sphere. Let $N_{\divSet} \subset \MxiDivSet$ be a loose chart for $\partial \Leg$ in $\MxiDivSet$ in which we use a contact form $\alpha_{\divSet}$ for $\xiDivSet$. Then we have a Weinstein neighborhood $N_{M} = \disk(\rho) \times N_{\divSet} \subset \Mxi$ along which we can assume
\begin{equation*}
\xi = \ker(\alpha_{\divSet} + pdq), \quad \Leg \cap N_{M} = \partial \Leg \times \{ p=0, q > 0\}
\end{equation*}
using a Weinstein neighborhood construction for some sufficiently small radius $\rho > 0$. It follows that $\{ q \in [\epsilon, 2\epsilon]\} \subset \Leg$ is the product of a loose chart with a zero section in a cotangent disk bundle for $\epsilon > 0$ small. As described in \cite{Eliash:WeinsteinRevisited}, it follows that $\{ q \in [\epsilon, 2\epsilon]\} \subset \Leg$ has a loose chart in the complement of $\divSet$, completing the proof of the lemma.
\end{proof}

Now we prove Theorem \ref{Thm:OTDivisor}. Suppose that $\divSet \subset \Mxi$ is as in the above lemma with $\MxiDivSet$ is overtwisted. As in the construction of the standard stabilization, we can find a Legendrian disk $\Leg \subset \Mxi$ for which $\partial \Leg = \Leg \cap \divSet$ is a standard $\dim=n-1$ Legendrian unknot and build an ambient Weinstein handle $H_{n}$ containing $\Leg$ as its stable manifold $\Stable$. Intrinsically $(\divSet^{+}, \xi_{\divSet^{+}})$ is the result of a contact $-1$ surgery along a Legendrian unknot $\Leg_{U}^{n-1} \subset \MxiDivSet$. Since this $\dim=n-1$ unknot is contained in a Darboux ball, $(\divSet^{+}, \xi_{\divSet^{+}})$, the result of the surgery is the result of a connected sum $\Mxi$ with $(\sphere^{\ast}\sphere^{n}, \xi_{std})$ -- the latter being the result of a contact $-1$ surgery on $\Leg^{n-1}_{U} \subset \stdSphere{2n-1}$. Hence $\xi_{\divSet^{+}}$ is overtwisted. The unstable manifold $\Unstable \subset H_{n}$ is Legendrian stabilized in the complement of $\divSet^{+}$ by the preceding lemma, so $\divSet$ is a stabilization and the proof of Theorem \ref{Thm:OTDivisor} is complete.

An identical method of proof establishes that $\divSet$ is an anti-stabilization as well.

\textsc{Universit\'{e} Paris-Saclay, Laboratoire de Math\'{e}matiques d'Orsay, Orsay, France}\par\nopagebreak
\textit{Email:} \href{mailto:russell.avdek@universite-paris-saclay.fr}{russell.avdek@universite-paris-saclay.fr}\par\nopagebreak
\textit{URL:} \href{https://www.russellavdek.com/}{russellavdek.com}

\end{document}